\newcommand*{\inst}[1]{}
\date{}
\newcommand*{\keywords}[1]{Keywords: #1}
\theoremstyle{definition}
\newtheorem{theorem}{Theorem}[section]
\newtheorem{lemma}[theorem]{Lemma}
\newtheorem{corollary}[theorem]{Corollary}
\newtheorem{example}[theorem]{Example}
\newcommand*{\s}[1]{\ensuremath{\mathsf{#1}}}
\newcommand*{\Define}[1]{{\color{blue}\emph{#1}}}
\newcommand*{\SA}{\textsf{\textup{SA}}}
\newcommand*{\BWT}{\textsf{\textup{BWT}}}
\newcommand*{\ISA}{\textsf{\textup{ISA}}}
\newcommand*{\word}{\ensuremath{\s{T}}}
\newcommand*{\Word}{\ensuremath{\s{V}}}
\newcommand*{\bmodz}{\mathbin{\operatorname{mod}_0}}
\newcommand*{\functionname}[1]{{\ensuremath{\renewcommand{\rmdefault}{ptm}\fontfamily{ppl}\selectfont\textrm{\textup{#1}}}}} \newcommand*{\BalancedTwo}{\ensuremath{\functionname{balanced}_2}}
\begin{document}

\title{On Arithmetically Progressed Suffix Arrays and related Burrows--Wheeler Transforms\thanks{This paper is an extension of a contribution to the Prague Stringology Conference 2020, published in~\cite{DKKS20}.}}
\author{Jacqueline W. Daykin\inst{1} 
\and Dominik K\"oppl\inst{2} 
\and David K\"ubel\inst{3} 
\and Florian Stober\inst{4}}

\maketitle
\begin{abstract}
We characterize those strings whose suffix arrays are based on arithmetic progressions, in particular, arithmetically progressed permutations where all pairs of successive entries of the permutation have the same difference modulo the respective string length.
We show that an arithmetically progressed permutation~$P$ coincides with the suffix array of a unary, binary, or ternary string.
We further analyze the conditions of a given~$P$ under which we can find a uniquely defined string over either a binary or ternary alphabet having $P$ as its suffix array.
For the binary case, we show its connection to lower Christoffel words, balanced words, and Fibonacci words.
In addition to solving the
arithmetically progressed suffix array problem, we give the shape of the Burrows--Wheeler transform of those strings solving this problem.
These results give rise to numerous future research directions.
\end{abstract}

\keywords{arithmetic progression, Burrows--Wheeler transform, Christoffel words, suffix array, string combinatorics}

\section{Introduction}

The integral relationship between the suffix array~\cite{manber93sa} (SA) and Burrows--Wheeler transform~\cite{burrows94bwt} (BWT) is explored by Adjeroh et al.~\cite{ABM-08}, who also illustrate the versatility of the BWT beyond its original motivation in lossless block compression~\cite{burrows94bwt}.
BWT applications include compressed index structures using backward search pattern matching, multimedia information retrieval, bioinformatics sequence processing, and it is at the heart of the bzip2 suite of text compressors.
By its association with the BWT, this also indicates the importance of the SA data structure and hence our interest in exploring its combinatorial properties.

These combinatorial properties can be useful when 
checking the performance or integrity of string algorithms or data structures on string sequences in testbeds
when properties of the employed string sequences are well understood.
In particular, due to current trends involving massive data sets, 
indexing data structures need to work in external memory (e.g.,~\cite{bingmann16esais}), 
or on distributed systems (e.g.~\cite{fischer19distributed}).
For devising a solution adaptable to these scenarios, 
it is crucial to test whether the computed index (consisting of the suffix array or the BWT, for instance) is correctly stored on the hard disk or on the computing nodes, respectively.
This test is more cumbersome than in the case of a single machine working only with its own RAM\@.
One way to test is to compute the index for an instance, whose index shape can be easily verified.
For example, one could check the validity of the computed BWT
on a Fibonacci word since the shape of its BWT is known~\cite{mantaci03sturmian,christodoulakis06fibonacci,simpson08bwt}.

Other studies based on Fibonacci words are the suffix tree~\cite{rytter06fibonacci} or the Lempel-Ziv 77 (LZ77) factorization~\cite{berstel06strumian}.
In~\cite{koppl15fibonacci}, the suffix array and its inverse of each even Fibonacci word is studied as an arithmetic progression.
In this study, the authors, like many at that time, did not append the artificial $\texttt{\$}$ delimiter (also known as a sentinel) to the input string, thus allowing suffixes to be prefixes of other suffixes.
This small fact makes the definition of $\BWT_{\word}[i] = \word[\SA_{\word}[i]-1]$ for a string~\word{} with suffix array~$\SA_{\word}$ 
incompatible with the traditional BWT defined on the BWT matrix, namely the lexicographic sorting of all cyclic rotations of the string~\word{}.

For instance, $\SA_{\texttt{bab}} = [2,3,1]$ with $\BWT_{\texttt{bab}} = \texttt{bab}$, while
$\SA_{\texttt{bab\$}} = [4,2,3,1]$ and $\BWT_{\texttt{bab\$}} = \texttt{bba\$}$ with $\texttt{\$} < \texttt{a} < \texttt{b}$.
However, the traditional BWT constructed by reading the last characters of the lexicographically sorted cyclic rotations $[\texttt{ab\textit{b}}, \texttt{ba\textit{b}}, \texttt{bb\textit{a}}]$ of \texttt{bab} yields \texttt{bba}, 
which is equal to $\BWT_{\texttt{bab\$}} = \texttt{bba\$}$ after removing the \texttt{\$} character. 

Note that not all strings are in the BWT image.
An $O(n \log n)$-time algorithm is given by {Giuliani et al.}~\cite{giuliani19dollar} for
identifying all the positions in a string \s{S} where a $\texttt{\$}$ can be inserted into so that \s{S} becomes the BWT image of a string ending with $\texttt{\$}$.

Despite this incompatibility in the suffix array based definition of the BWT, we can still observe a regularity for even Fibonacci words~\cite[Sect.\ 5]{koppl15fibonacci}.
Similarly, both methods for constructing the BWT are compatible when the string~\word{} consists of a Lyndon word.
The authors of~\cite[Remark 1]{koppl15fibonacci} also observed similar characteristics for other, more peculiar string sequences.
For the general case, the $\texttt{\$}$ delimiter makes both methods equivalent, however the suffix array approach is typically preferred as it requires $O(n)$ time~\cite{nong09sais} compared to $O(n^2)$ with the BWT matrix method~\cite{burrows94bwt}.
By utilizing combinatorial properties of the BWT, an in-place algorithm is given by Crochemore et al.~\cite{CrochemoreGKL15}, which avoids the need for explicit storage for the suffix sort and output arrays, and runs in $O(n^2)$ time using $O(1)$ extra memory (apart from storing the input text).
K\"{o}ppl et al.~\cite[Sect.~5.1]{koppl20inplace} adapted this algorithm to compute the traditional BWT within the same space and time bounds.

Up to now, it has remained unknown whether we can formulate a class of string sequences for which we can give the shape of the suffix array as an arithmetic progression (independent of the $\texttt{\$}$ delimiter).
With this article, we catch up on this question, and establish a correspondence between strings and suffix arrays generated by arithmetic progressions.
Calling a permutation of integers $[1..n]$ \emph{arithmetically progressed} if all pairs of successive entries of the permutation
have the same difference modulo $n$, 
we show that an arithmetically progressed permutation coincides with the suffix array of a unary, binary or ternary string.
We analyze the conditions of a given arithmetically progressed permutation~$P$ under which we can find a uniquely defined string~\word{} over either a unary, a binary, or ternary alphabet having $P$ as its suffix array.

The simplest case is for unary alphabets: 
Given the unary alphabet $\Sigma := \{\texttt{a}\}$ and a string~\word{} of length~$n$ over~$\Sigma$, 
$\SA_{\word} = [n, n-1, \ldots, 1]$ is an arithmetically progressed permutation with ratio~$-1 \equiv n-1 \mod n$.

For the case of a binary alphabet $\{\texttt{a}, \texttt{b}\}$, several strings of length~$n$ exist that solve the problem.
Trivially, the solutions for the unary alphabet also solve the problem for the binary alphabet.
However, studying those strings of length~$n$ whose suffix array is $[n, n-1, \ldots, 1]$, there are now multiple solutions: 
each $\word = \texttt{b}^r \texttt{a}^s$ with $r, s \in [0..n]$ such that $r+s=n$ has this suffix array. 
Similarly, $\word=\texttt{a}^{n-1}\texttt{b}$ has the suffix array $\SA_{\word} = [1, 2, \ldots, n]$, which is an arithmetically progressed permutation with ratio 1.

\newcommand*{\precV}{\ensuremath{\prec_{\textup{V}}}}

  A non-lexicographic ordering on strings, \Define{the $V$-order}, considered for a modified FM-index~\cite{alatabbi19vorder}, provides a curious example for the case with ratio $-1$:
	if \s{S} is a proper subsequence of a string \word{} of length~$n$,
	then \s{S} precedes \word{} in $V$-order, written $\s{S} \precV \word$.
	This implies that $\word[n] \precV \word[n-1..n] \precV \cdots \precV \word[1..n]$, so that $\SA_{\word}[i] = n - i + 1$ for every~$i \in [1..n]$, thus enabling trivial suffix sorting.

	Clearly, any string ordering method that prioritizes minimal length within its definition will have a suffix array progression ratio of $-1$.
	Binary Gray codes\footnote{Originally known as \emph{Reflected Binary Code} by the inventor Frank Gray.} have this property and are ordered so that adjacent strings differ by exactly one bit~\cite{Gray53}.
	These codes, which exhibit non-lexicographic order, have numerous applications, notably in error detection schemes such as flagging unexpected changes in data for digital communication systems, and logic circuit minimization.
	While it seems that Gray code order has not been applied directly to order the rows in a BWT matrix, just four years after the BWT appeared in 1994~\cite{burrows94bwt}, Chapin and Tate applied the concept when they investigated the effect of both alphabet and string reordering on BWT-based compressibility~\cite{CT98}.
	Their string sorting technique for the BWT matrix
ordered the strings in a manner analogous to reflected Gray codes, but for more general alphabets.
	This modification inverted the sorting order for alternating character positions with which they demonstrated improved compression.

	To date the only known \BWT~designed specifically for binary strings is the binary block order $B$-BWT~\cite{DGGLLLP16}.
	This binary string sorting method prioritizes length, thus also exhibiting a suffix array with progression ratio $-1$.
	Ordering strings of the same length, as applicable to forming the BWT matrix, is by a play on decreasing and increasing lexicographic ordering of the run length exponents of blocks of bits.
Experimentation showed roughly equal compressibility to the original \BWT~in binary lexicographic order but with instances where the $B$-BWT gave better compression.

	To see that the above two binary orders are distinct: 1101 comes before 1110  in Gray code order, whereas, 1110 comes before 1101 in binary block order.

In what follows, we present a comprehensive analysis of strings whose suffix arrays are arithmetically progressed permutations (under the standard lexicographic order).
In practice, such knowledge can reduce the $O(n)$ space for the suffix array to $O(1)$.

The structure of the paper is as follows.\footnote{Compared to the conference version, we added applications to Christoffel words and balanced words, generalized our results for larger alphabets, and gave examples for indeterminate strings.}
In Section~\ref{secPrelims} we give the basic definitions and background, and also deal with the elementary case of a unary alphabet.
We present the main results in Section~\ref{secAPSA}, where we cover ternary and binary alphabets, and consider inverse permutations. 
In \cref{secApplications}, we illustrate the binary case for Christoffel words and in particular establish that every (lower) Christoffel word has an arithmetically progressed suffix array. We go on to link the binary characterization to balanced words and Fibonacci words.
A characterization of strings with larger alphabets follows in \cref{secLargerAlphabets}.
We overview the theme concepts for meta strings in Section~\ref{secMeta}.
We conclude in Section~\ref{secConclusion} and propose a list of open problems and research directions, showing there is plenty of scope for further investigation.
But for all that, we proceed to the foundational concepts presented in the following section, starting with the case of a unary alphabet.

\section{Preliminaries}
\label{secPrelims}

Let $\Sigma$ be an alphabet with size $\sigma := |\Sigma|$.
An element of $\Sigma$ is called a \Define{character}\footnote{Also known as \emph{letter} or \emph{symbol} in the literature.}.
Let $\Sigma^+$ denote the set of all nonempty finite strings over $\Sigma$.
The \Define{empty string} of length zero is
denoted by \s{\varepsilon}; we write $\Sigma^* = \Sigma^+ \cup \{\s{\varepsilon}\}$.
Given an integer $n \ge 1$,
a \Define{string}\footnote{Also known as \emph{word} in the
literature.} 
of length~$n$ over $\Sigma$ takes the form $\word = t_1 \cdots t_n$ with each $t_i \in \Sigma$.
We write $\word = \word[1..n]$ with $\word[i] = t_i$.
The length~$n$ of a string \word{} is denoted by $|\word|$.
If $\word = \s{uwv}$ for some strings $\s{u,w,v} \in \Sigma^*$, then \s{u} is a \Define{prefix}, \s{w}
is a \Define{substring}, and \s{v} is a \Define{suffix} of \word{}; 
we say $\s{u}$ (resp.\ \s{w} and \s{v}) is \Define{proper} if $\s{u} \neq \word$ (resp.\ $\s{w} \neq \word$ and $\s{v} \neq \word$).
We say that a string~\word{} of length~$n$ has \Define{period}~$p \in [1..n-1]$ if $\word[i] = \word[i+p]$ for every~$i \in [1..n-p]$ 
(note that we allow periods larger than $n/2$).
If $\word = \s{uv}$,
then \s{vu} is said to be a \Define{cyclic rotation} of \word{}.
 A string \word{} is said to be a \Define{repetition} if and only if it has a factorization
 $\word{} = \word^k$ for some integer $k \ge 1$;
 otherwise, \word{} is said to be \Define{primitive}.
A string that is both a proper prefix and a proper suffix of a 
string $\word \neq  \s{\varepsilon}$ is called a \Define{border} of~\word{}; a string is 
\Define{border-free} if the only border it has is the empty string \s{\varepsilon}.

If $\Sigma$ is a totally ordered alphabet with order~$<$, 
then this order~$<$ induces the \Define{lexicographic ordering} $\prec$ on $\Sigma^*$ such that $\s{u} \prec \s{v}$ for two strings $\s{u},\s{v} \in \Sigma^*$
if and only if either \s{u} is a proper prefix of
\s{v}, or $\s{u}=\s{r}a\s{s}$, $\s{v}=\s{r}b\s{t}$ for two characters $a,b \in \Sigma$ such that $a<b$ 
and for some strings $\s{r},\s{s},\s{t} \in \Sigma^*$.
In the following, we select a totally ordered alphabet~$\Sigma$ having three characters $\texttt{a}, \texttt{b}, \texttt{c}$ with $\texttt{a} < \texttt{b} < \texttt{c}$.

A string~\word{} is a \Define{Lyndon word} if it is strictly least in the lexicographic order among all its cyclic rotations~\cite{lothaire97combinatorics}.
For instance, \texttt{abcac} and \texttt{aaacbaabaaacc} are Lyndon words, 
while the string~\texttt{aaacbaabaaac} with border~\texttt{aaac} is not.     

	A reciprocal relationship exists between the suffix array of a text and its Lyndon factorization, that is, 
	the unique factorization of the text by greedily choosing the maximal length Lyndon prefix while processing the text from start to end: 
	the Lyndon factorization of a text can be obtained from its suffix array~\cite{HR03}; conversely, the suffix array of a text can be constructed iteratively from its Lyndon factorization~\cite{MRRS14}.

	Lyndon words have numerous applications in combinatorics and algebra, and prove challenging entities due to their non-commutativity.
	Additionally, Lyndon words can arise naturally in Big Data --  an instance in a biological sequence over the DNA alphabet, 
	with $\Sigma = \{\texttt{A} < \texttt{C} < \texttt{G} <\texttt{T}\}$, is the following substring occurring in a SARS-CoV-2 genome\footnote {\url{https://www.ncbi.nlm.nih.gov/nuccore/NC_045512.2?report=fasta}}:
	\begin{center}
	\texttt{AAAAACAGTAAAGTACAAATAGGAGAGTACACCTTTGAAAAAGGTGACTATGGTGAT}
	\end{center}

For the rest of the article, we take a string~\word{} of length~$n \ge 2$.
The suffix array $\SA := \SA_{\word}[1..n]$ of~\word{}
is a permutation of the integers $[1..n]$ such that $\word[\SA[i]..n]$ is the $i$-th lexicographically smallest suffix of~\word{}.
We denote with \ISA{} its inverse, i.e., $\ISA[\SA[i]] = i$.
By definition, \ISA{} is also a permutation.
The string~\BWT{} with $\BWT[i] = \word[\SA[i]-1 \mod n]$ is the (SA-based) BWT  of~\word{}.

The focus of this paper is on arithmetic progressions.
An \Define{arithmetic progression} is a sequence of numbers such that the differences between all two consecutive terms are of the same value:
Given an arithmetic progression~$\{p_i\}_{i \ge 1}$, there is an integer~$k \ge 1$ such that 
$p_{i+1} = p_i + k$ for all $i \ge 1$.
We call $k$ the \Define{ratio} of this arithmetic progression.
Similarly to sequences, we can define permutations that are based on arithmetic progressions:
An \Define{arithmetically progressed permutation} with ratio~$k \in [1..n-1]$
is an array~$P := [p_1,\ldots,p_n]$ 
with $p_{i+1} = p_i + k \mod n$ for all $i \in [1..n]$, where we stipulate that $p_{n+1} := p_1$.\footnote{We can also support negative values of~$k$:
Given a negative~$k < 0$, we exchange it with~$k' := n-k \mod n \in [1..n]$ and use $k'$ instead of~$k$.}
Here $x \bmod n := x$ if $x \le n$, $x - n \mod n$ for an integer~$x \ge 1$, and $x+n \mod n$ for $x < 1$.
In particular, $n \mod n = 0 \mod n = n$.
In what follows, we want to study (a) strings whose suffix arrays are arithmetically progressed permutations, and (b) the shape of these suffix arrays.
For a warm-up, we start with the unary alphabet:

\begin{theorem}\label{thmUnaryAlphabet}
	Given the unary alphabet~$\{\mathtt{a}\}$, the suffix array of a string of length~$n$ over~$\{\mathtt{a}\}$ is uniquely defined by 
	the arithmetically progressed permutation $[n,n-1,\ldots,1]$ with ratio~$n-1$.
\end{theorem}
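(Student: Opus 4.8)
The plan is to argue directly from the definition of the suffix array. Fix the string $\word = \texttt{a}^n$, the only string of length~$n$ over $\{\texttt{a}\}$. Its suffixes are $\word[i..n] = \texttt{a}^{\,n-i+1}$ for $i \in [1..n]$, so they are pairwise distinct and the suffix starting at position~$i$ is a proper prefix of the suffix starting at position~$j$ whenever $i > j$. By the definition of lexicographic order (a proper prefix precedes its extension), the shorter suffix is lexicographically smaller, hence $\word[n..n] \prec \word[n-1..n] \prec \cdots \prec \word[1..n]$. Reading off positions in increasing lexicographic order gives $\SA_{\word} = [n, n-1, \ldots, 1]$, and since $\word = \texttt{a}^n$ is the unique length-$n$ string over the unary alphabet, this suffix array is uniquely determined.

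Next I would check the two arithmetic-progression bookkeeping claims. First, $[n, n-1, \ldots, 1]$ is an arithmetically progressed permutation with ratio $n-1$: setting $p_i = n-i+1$, we have $p_{i+1} = p_i - 1 \equiv p_i + (n-1) \bmod n$ for $i \in [1..n-1]$, and the wrap-around condition $p_{n+1} := p_1$ reads $1 + (n-1) = n \bmod n = n = p_1$, which holds by the convention $n \bmod n = n$ stated in the preliminaries. Second, I would note that $n-1 \in [1..n-1]$ for $n \ge 2$, so the ratio lies in the admissible range required by the definition of an arithmetically progressed permutation; this is why the theorem is phrased for $n \ge 2$, consistent with the standing assumption on $\word$.

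There is essentially no obstacle here — the statement is the promised ``warm-up.'' The only point requiring a sliver of care is making the modular arithmetic match the paper's nonstandard convention $x \bmod n \in [1..n]$ (in particular $n \bmod n = n$ rather than $0$), so that the cyclic wrap-around $p_{n+1} = p_1$ is verified with the right normalization rather than naively. Everything else is immediate from the lexicographic-order definition and the observation that over a unary alphabet the string of each length is unique, which also delivers the uniqueness clause of the theorem for free.
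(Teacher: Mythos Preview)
Your proof is correct and matches the paper's (implicit) reasoning: the paper states this theorem as a ``warm-up'' without a formal proof, having already noted in the introduction that for $\word = \texttt{a}^n$ one has $\SA_{\word} = [n,n-1,\ldots,1]$ with ratio $-1 \equiv n-1 \bmod n$. Your write-up simply spells out the obvious argument the paper leaves to the reader, and your care with the nonstandard modular convention is appropriate.
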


Conversely, given the arithmetically progressed permutation $P = [n,n-1,\ldots,1]$, 
we want to know the number of strings from a general totally ordered alphabet~$\Sigma = [1..\sigma]$ with the natural order $1<2< \cdots < \sigma$, having $P$ as their suffix array.
For that, we fix a string~\word{} of length~$n$ with $\SA_{\word} = P$.
Let $s_j \ge 0$ be the number of occurrences of the character~$j \in \Sigma$ appearing in~$\word{}$.
Then $\sum_{j=1}^\sigma s_j = n$.
By construction, each character~$j$ has to appear after all characters~$k$ with $k > j$.
Therefore, 
$\word = \sigma^{s_\sigma} (\sigma-1)^{s_{\sigma-1}} \cdots 1^{s_1}$
such that the position of the characters are uniquely determined.\footnote{Note that we slightly misused notation as the exponentiations of the characters being integers have to be understood as writing a character as many times as the exponent. So $1^{s_1}$ does not give $1$ but a string of length~$s_1$ having only $1$'s as characters.} 
In other words, we can reduce this problem to the classic stars and bars problem~\cite[Chp.~II, Sect.~5]{feller68probability}
with $n$ stars and $\sigma - 1$ bars, yielding $n + \sigma - 1 \choose n$ possible strings.
Hence we obtain:

\begin{theorem}\label{thmUnaryEnumeration}
There are $n + \sigma - 1 \choose n$ strings of length~$n$ over an alphabet with size~$\sigma$ having the suffix array~$[n,n-1,\ldots,1]$.
\end{theorem}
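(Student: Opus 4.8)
The plan is to show that the strings of length~$n$ over $\Sigma = [1..\sigma]$ having suffix array $P = [n, n-1, \ldots, 1]$ are exactly the weakly decreasing strings, and then count them. The first step is to establish the characterization: a string~$\word{}$ of length~$n$ satisfies $\SA_{\word} = [n, n-1, \ldots, 1]$ if and only if $\word[1] \ge \word[2] \ge \cdots \ge \word[n]$. For the ``if'' direction, I would argue that when $\word{}$ is weakly decreasing, the suffix starting at position~$i$ is lexicographically at most the suffix starting at position~$i-1$: either $\word[i-1] > \word[i]$, in which case the suffix at~$i$ is smaller by the first character; or $\word[i-1] = \word[i]$, in which case the suffix at~$i$ is a proper prefix of the suffix at~$i-1$ (one character shorter with the same leading character repeated), hence lexicographically smaller. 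So the suffixes are sorted in decreasing order of starting position, giving $\SA_{\word} = [n, \ldots, 1]$. For the ``only if'' direction, suppose $\word[i-1] < \word[i]$ for some~$i$; then the suffix at~$i-1$ starts with a strictly smaller character than the suffix at~$i$, so it is lexicographically smaller, which means $i-1$ appears before~$i$ in $\SA_{\word}$, contradicting the decreasing order. This is essentially the content already sketched in the paragraph preceding \cref{thmUnaryEnumeration}, so I would mostly be making the ``uniquely determined by the multiset of characters'' claim precise.

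The second step is the enumeration. A weakly decreasing string of length~$n$ over $[1..\sigma]$ is determined precisely by how many times each character~$j \in [1..\sigma]$ occurs; writing $s_j \ge 0$ for this count, the string is forced to be $\sigma^{s_\sigma}(\sigma-1)^{s_{\sigma-1}}\cdots 1^{s_1}$, and conversely every choice of nonnegative $s_1, \ldots, s_\sigma$ with $\sum_{j=1}^{\sigma} s_j = n$ yields exactly one such string. So the count is the number of solutions in nonnegative integers of $s_1 + \cdots + s_\sigma = n$, which is the classic stars-and-bars count $\binom{n + \sigma - 1}{n}$ (equivalently $\binom{n+\sigma-1}{\sigma-1}$): distribute $n$ indistinguishable stars among $\sigma$ bins using $\sigma - 1$ bars, choosing the positions of the $n$ stars among $n + \sigma - 1$ slots. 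I would cite \cite[Chp.~II, Sect.~5]{feller68probability} for the combinatorial identity, as the paper already does.

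I do not expect a serious obstacle here; the statement is elementary once the characterization of decreasing strings is in hand. The only point requiring a little care is the ``only if'' direction of the characterization — one must make sure that having $\SA_{\word} = [n, \ldots, 1]$ really forces weak monotonicity and not merely some weaker condition. The argument above handles this cleanly by contraposition: any strict ascent $\word[i-1] < \word[i]$ immediately forces suffix~$i-1$ to precede suffix~$i$ lexicographically (comparison decided at the first character), contradicting the prescribed order. A minor bookkeeping subtlety is that the paper's convention does not append a sentinel $\texttt{\$}$, so a suffix can be a proper prefix of another suffix; but this works in our favour in the ``if'' direction (the shorter suffix is then the smaller one under $\prec$), and it is exactly the behaviour needed. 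Everything else is the routine stars-and-bars computation, which I would not spell out in detail beyond naming it.
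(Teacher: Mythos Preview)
Your approach is essentially the paper's: characterize the strings with $\SA_{\word}=[n,n-1,\ldots,1]$ as the weakly decreasing ones and then apply stars and bars. The paper actually argues only the ``only if'' direction explicitly and leaves the converse implicit, so your attempt to spell out both directions is welcome.

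However, your ``if'' argument contains a false step. When $\word[i-1]=\word[i]$, the suffix $\word[i..n]$ is \emph{not} in general a proper prefix of $\word[i-1..n]$. Take $\word=\texttt{bba}$: the suffix at position~$2$ is $\texttt{ba}$, which is not a prefix of $\texttt{bba}$. (What is always true is that $\word[i..n]$ is a proper \emph{suffix} of $\word[i-1..n]$, but that does not help with the lexicographic comparison.) Your parenthetical ``one character shorter with the same leading character repeated'' would only hold if the entire tail $\word[i-1..n]$ were constant, which weak monotonicity does not force.

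The fix is a one-line induction from the end: for $i=n$ the suffix $\word[n]$ is trivially smallest; assuming $\word[i+1..n]\prec\word[i..n]$, if $\word[i-1]>\word[i]$ you are done by the first character, and if $\word[i-1]=\word[i]=c$ then $\word[i..n]=c\cdot\word[i+1..n]\prec c\cdot\word[i..n]=\word[i-1..n]$ by the inductive hypothesis. With this correction your proof goes through and matches the paper's.
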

As described above, strings of \cref{thmUnaryEnumeration} have the form $\sigma^{s_\sigma} (\sigma-1)^{s_{\sigma-1}} \cdots 1^{s_1}$.
The BWT based on the suffix array is
$1^{s_1-1} 2^{s_2} \cdots \sigma^{s_\sigma} 1$.
For $s_1 \ge 2$, it does not coincide with the BWT based on the rotations since
the lexicographically smallest rotation is $1^{s_1} \sigma^{s_\sigma} \cdots 2^{s_2}$, and hence the first entry of this BWT is~$2$.
For $s_1 = 1$, the last character `1' acts as the dollar sign being unique and least among all characters, making both BWT definitions equivalent.

For the rest of the analysis, we omit the arithmetically progressed permutation $[n, n-1,\allowbreak \ldots,1]$ of ratio $k= n-1$ as this case is complete. 
All other permutations (including those of ratio $k=n-1$) are covered in our following theorems whose results we summarized in \cref{tabOverview}.

\begin{figure}
\centering
\begin{tabular} {c @{\hskip 1em} | c c c @{\hskip 1em} l }
    \toprule
	$p_1$		            & $k$ 	        & Min.
	$\sigma$     & Properties of Strings  & Reference \\
    \midrule
	$1$                     &               & 2                         &  unique, Lyndon word & \cref{thmBinaryStringCharacteristic} \\[1em]
	$k+1$                   &               & 2                         &  unique, period $(n-k)$ & \cref{thmBinaryStringCharacteristic} \\[1em]
	\multirow{2}{*}{$n$}    & $\neq (n-1)$  & 2                         &  unique, period $(n-k)$ & \cref{thmBinaryStringCharacteristic} \\
							& $= (n-1)$     & 1                         &  trivially periodic & \cref{thmUnaryEnumeration} \\[1em]
							$\not\in\lbrace 1, k+1, n\rbrace$   &   & 3                         &  unique & \cref{thmTernaryAlphabet} \\
    \bottomrule
    \end{tabular}
    \caption{Characterization of strings whose suffix array is an arithmetic progression $P = [p_1, \ldots p_n]$ of ratio $k$.
    The choice of $p_1$ determines the minimum size of the alphabet and whether a string is unique, periodic or a Lyndon word.
	The column \emph{Min. $\sigma$} denotes the smallest possible size $\sigma$ for which there exists such a string whose characters are drawn from an alphabet $\Sigma$ with $|\Sigma| = \sigma$.
}
	\label{tabOverview}
\end{figure}

\section{Arithmetically Progressed Suffix Arrays}
\label{secAPSA}

We start with the claim that each arithmetically progressed permutation coincides with the suffix array of 
a string on a ternary alphabet.
Subsequently, given an arithmetically progressed permutation~$P$, we show that either there is precisely one string~\word{} with $\SA_\word = P$ whose characters are drawn from a \emph{ternary} alphabet, or, if there are multiple candidate strings, then there is precisely one whose characters are drawn from a \emph{binary} alphabet.
For this aim, we start with the restriction on~$k$ and~$n$ to be coprime:

\subsection{Coprimality}
Two integers are \Define{coprime}\footnote{Also known as \emph{relatively prime} in the
literature.} 
if their greatest common divisor (gcd) is one.
An \Define{ideal} $k\mathbb{N} := \{ki\}_{i \in \mathbb{N}}$ is a subgroup of $([1..n], +)$.
It \Define{generates} $[1..n]$ if $|k\mathbb{N}| = n$, i.e., $k\mathbb{N}  = [1..n]$.
Fixing one element $P[1] \in k\mathbb{N}$ of an ideal $k\mathbb{N}$ generating $[1..n]$
induces an arithmetically progressed permutation~$P[1..n]$ with ratio~$k$ by setting $P[i+1] \gets P[i] + k$ for every~$i \in [1..n-1]$.
On the contrary, each arithmetically progressed permutation with ratio~$k$ induces an ideal $k\mathbb{N}$ 
(the induced ideals are the same for two arithmetically progressed permutations that are shifted).
Consequently, there is no arithmetically progressed permutation with ratio~$k$ if $k$ and $n$ are not coprime 
since in this case $\{ (ki) \mod n \mid i \ge 1 \} \subsetneq [1..n]$, from which we obtain:

\begin{lemma}
The numbers $k$ and $n$ must be coprime if there exists an arithmetically progressed permutation of length~$n$ with ratio~$k$.
\end{lemma}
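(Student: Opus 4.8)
The plan is to argue by contraposition: assuming $d := \gcd(k,n) \ge 2$, I will show that no array generated by the defining recurrence $p_{i+1} = p_i + k \bmod n$ can be a permutation of $[1..n]$, because all generated entries are confined to a proper subset of $[1..n]$. First I would unfold the recurrence to the closed form $p_{i+1} \equiv p_1 + ik \pmod{n}$ for every $i \in [0..n-1]$ — a one-line induction, and the paper's convention that $x \bmod n$ lands in $[1..n]$ rather than $[0..n-1]$ does not affect the residue class. Hence every entry of $P$ lies in the coset $p_1 + k\mathbb{N} \pmod n$.

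Next I would determine the size of this coset. The set $\{ik \bmod n \mid i \ge 1\}$ is exactly the ideal $k\mathbb{N}$, which coincides with the ideal $d\mathbb{N}$ and therefore has cardinality $n/d$; in the terminology fixed just before the lemma, $k\mathbb{N}$ generates $[1..n]$ precisely when $|k\mathbb{N}| = n$, which fails here since $|k\mathbb{N}| = n/d < n$. Consequently the $n$ entries $p_1, \ldots, p_n$ take at most $n/d < n$ distinct values modulo $n$, so two of them coincide by pigeonhole, contradicting that $P$ is a permutation of $n$ distinct integers of $[1..n]$. If a fully self-contained witness is preferred, one may instead simply take the index $i := n/d \in [1..n-1]$: then $ik = (n/d)k$ is a multiple of $n$, so $p_{i+1} \equiv p_1 \pmod n$, i.e.\ $p_{i+1} = p_1$ with $1 \neq i+1$, again a contradiction.

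I do not expect a genuine obstacle here: the statement is an elementary fact about cyclic groups and is essentially already spelled out in the paragraph preceding the lemma, so the work is purely one of bookkeeping. The only points requiring a little care are handling the paper's nonstandard $\bmod n$ map into $[1..n]$ consistently, and stating cleanly why the subgroup generated by $k$ in $\mathbb{Z}/n\mathbb{Z}$ has exactly $n/\gcd(k,n)$ elements.
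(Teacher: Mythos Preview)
Your argument is correct and follows the same route as the paper, which compresses the whole proof into the observation that $\{(ki) \bmod n \mid i \ge 1\} \subsetneq [1..n]$ when $k$ and $n$ are not coprime. You have simply expanded this one-line sketch into a full pigeonhole/coset argument, with your explicit witness $i = n/d$ being a clean concrete version of the same idea.
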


\subsection{Ternary Alphabet}

\begin{figure}[tb]

  \setlength{\tabcolsep}{0.25em}
	\centerline{\begin{tabular}{r
@{\hskip 1em}
*{8}{l}
@{\hskip 1em}
r*{8}{c}l
@{\hskip 1em}
ll}
		\toprule
		Rotation & \multicolumn{8}{c}{\word{}} & \multicolumn{10}{c}{P} & \multirow{2}{*}{\begin{minipage}{4em}{$p_1-k-1$ \\ $\bmod ~n$}\end{minipage}} & $\BWT_{\word}$ \\
		\cmidrule(r){2-9}
		\cmidrule{11-18}
		&
		{\scriptsize 1} &
		{\scriptsize 2} &
		{\scriptsize 3} &
		{\scriptsize 4} &
		{\scriptsize 5} &
		{\scriptsize 6} &
		{\scriptsize 7} &
		{\scriptsize 8} &
		  &
		{\scriptsize 1} &
		{\scriptsize 2} &
		{\scriptsize 3} &
		{\scriptsize 4} &
		{\scriptsize 5} &
		{\scriptsize 6} &
		{\scriptsize 7} &
		{\scriptsize 8} &
		  &
		\\
		(1) & 
		\texttt{b} & 
		\texttt{a} & 
		\texttt{b} & 
		\texttt{b} & 
		\texttt{a} & 
		\texttt{b} & 
		\texttt{a} & 
		\texttt{c} & 
	[ &
		5, &
		2, &
		7$\mid$ &
		4, &
		1, &
		6, &
		3$\mid$ &
		8 &
		] &
		7 & $\texttt{b}^4 \texttt{c} \texttt{a}^3$ \\
		(2) &
	\texttt{b} & 
	\texttt{a} & 
	\texttt{b} & 
	\texttt{a} & 
	\texttt{c} & 
	\texttt{b} & 
	\texttt{a} & 
	\texttt{c} & 
		[ &
		2, &
		7, &
		4$\mid$ &
		1, &
		6, &
		3$\mid$ &
		8, &
		5 &
		] &
		4 & $\texttt{b}^3 \texttt{c}^2 \texttt{a}^3$ \\
		(3) &
		\texttt{a} & 
		\texttt{c} & 
		\texttt{b} & 
		\texttt{a} & 
		\texttt{c} & 
		\texttt{b} & 
		\texttt{a} & 
		\texttt{c} & 
		[ &
		7, &
		4, &
		1$\mid$ &
		6, &
		3$\mid$ &
		8, &
		5, &
		2 &
		] &
		1 & $\texttt{b}^2 \texttt{c}^3 \texttt{a}^3$ \\
		(4) &
		\texttt{a} & 
		\texttt{c} & 
		\texttt{b} & 
		\texttt{a} & 
		\texttt{c} & 
		\texttt{a} & 
		\texttt{c} & 
		\texttt{c} & 
		[ &
		4, &
		1, &
		6$\mid$ &
		3$\mid$ &
		8, &
		5, &
		2, &
		7 &
		] &
		6 & $\texttt{b} \texttt{c}^4 \texttt{a}^3$  \\
		(5) &
		\texttt{a} & 
		\texttt{b} & 
		\texttt{a} & 
		\texttt{b} & 
		\texttt{b} & 
		\texttt{a} & 
		\texttt{b} & 
		\texttt{b} & 
		[ &
		1, &
		6, &
		3$\mid$ &
		8, &
		5, &
		2, &
		7, &
		4 &
		] &
		3 & $\texttt{b}^5 \texttt{a}^3$ \\
		(6) &
		\texttt{c} & 
		\texttt{c} & 
		\texttt{a} & 
		\texttt{c} & 
		\texttt{c} & 
		\texttt{a} & 
		\texttt{c} & 
		\texttt{b} & 
		[ &
		6, &
		3$\mid$ &
		8$\mid$ &
		5, &
		2, &
		7, &
		4, &
		1 &
		] &
		8 & $\texttt{c}^5 \texttt{a}^2 \texttt{b}$ \\
		(7) &
		\texttt{c} & 
		\texttt{c} & 
		\texttt{a} & 
		\texttt{c} & 
		\texttt{b} & 
		\texttt{c} & 
		\texttt{c} & 
		\texttt{b} & 
		[ &
		3$\mid$ &
		8, &
		5$\mid$ &
		2, &
		7, &
		4, &
		1, &
		6 &
		] &
		5 & $\texttt{c}^5 \texttt{a} \texttt{b}^2$ \\
		(8) &
		\texttt{b} & 
		\texttt{a} & 
		\texttt{b} & 
		\texttt{b} & 
		\texttt{a} & 
		\texttt{b} & 
		\texttt{b} & 
		\texttt{a} & 
		[ &
		8, &
		5, &
		2$\mid$ &
		7, &
		4, &
		1, &
		6, &
		3 &
		] &
		2 & $\texttt{b}^5 \texttt{a}^3$ \\
		\bottomrule
	\end{tabular}
	}\caption{\word{} of \cref{eqInducedTernaryWord} for each arithmetically progressed permutation~$P$ of length~$n = 8$ with ratio~$k = 5$, starting with $p_1 := P[1] = k = 5$. The permutation of the $k$-th row is the $k$-th cyclic rotation of the permutation~$P$ in the first row.
	  The splitting of $P$ into the subarrays is visualized by the vertical bar ($\mid$) symbol.
	For (5) and (8), the alphabet is binary and the BWTs are the same.
	The strings of~(3) and~(8) are periodic with period~$n-k$, since the last text position of each subarray is at most as large as $n-k = 3$ ({\it cf.}\ the proof of \cref{thmBinaryStringCharacteristic}).
	For $i \in [1..n]$,  $\BWT_{\word}[i] = \word[P[i+n-k^{-1} \mod n]] = \word[P[i+3 \mod n]]$ with $k^{-1} = k = 5$ defined in \cref{secInversePermutation}.
}
	\label{figTernaryExample}
\end{figure}

Given an arithmetically progressed permutation $P := [p_1, \ldots, p_n]$ with ratio~$k$,
we define the ternary string~$\word[1..n]$ by splitting  $P$ right after the values $n-k$ and $(p_1 - k - 1)\mod n$
into the three subarrays $A$, $B$, and~$C$ (one of which is possibly empty) such that $P = ABC$.
Subsequently, we set
\begin{equation}\label{eqInducedTernaryWord}
\word[p_i] :=
\begin{cases}
	\texttt{a}	& \text{~if~} p_i \in A\text{, or}\\
	\texttt{b}	& \text{~if~} p_i \in B\text{, or}\\
	\texttt{c}	& \text{~if~} p_i \in C\text{.}\\
\end{cases}
\end{equation}
\Cref{figTernaryExample} gives an example of induced ternary/binary strings.

\begin{theorem}\label{thmTernaryAlphabet}
	Given an arithmetically progressed permutation $P := [p_1, \ldots, p_n] \not= [n, \allowbreak n-1, \ldots,1]$ with ratio~$k$,
$\SA_\word = P$ for $\word$ defined in \cref{eqInducedTernaryWord}.
\end{theorem}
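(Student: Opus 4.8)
The plan is to verify directly that the permutation $P$ is the suffix array of the string $\word$ defined in \cref{eqInducedTernaryWord}, i.e.\ that $\word[p_1..n] \prec \word[p_2..n] \prec \cdots \prec \word[p_n..n]$. Since $P=ABC$ with the $A$-block receiving letter \texttt{a}, the $B$-block \texttt{b}, and the $C$-block \texttt{c}, and since within a block all positions are consecutive entries of $P$, it suffices to show: (i) every suffix starting with \texttt{a} is lexicographically smaller than every suffix starting with \texttt{b}, which in turn is smaller than every suffix starting with \texttt{c} — this is immediate from $\texttt{a}<\texttt{b}<\texttt{c}$ once we know the block membership of a position determines its first character, which is exactly the construction; and (ii) within each block, consecutive suffixes $\word[p_i..n]$ and $\word[p_{i+1}..n]$ are correctly ordered, i.e.\ $\word[p_i..n]\prec\word[p_{i+1}..n]$.

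The heart of the argument is (ii), and the key structural fact is the \emph{shift relation}: because $p_{i+1}=p_i+k \bmod n$, comparing the suffix at $p_i$ with the suffix at $p_{i+1}$ amounts to comparing, character by character, $\word[p_i],\word[p_{i+1}],\word[p_{i+2}],\dots$ against $\word[p_{i+1}],\word[p_{i+2}],\word[p_{i+3}],\dots$ — that is, the suffix-array-rank sequence shifted by one. First I would record that $\word[p_j]$ depends only on which of $A,B,C$ contains $p_j$, and that membership in $A,B,C$ is itself determined by the index $j$ (the block boundaries are fixed positions in $P$). So I would reformulate: define $f(j)\in\{\texttt{a},\texttt{b},\texttt{c}\}$ to be the letter assigned to the $j$-th entry of $P$; then $\word[p_i..n]$, read left to right until position $n$ is hit, spells out $f(i)f(i+1)\cdots f(m_i)$ where $p_{m_i}=n$, i.e.\ $m_i$ is the unique index with $P[m_i]=n$. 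Comparing the suffix at $p_i$ and at $p_{i+1}$ then reduces to comparing $f(i)f(i+1)\cdots$ with $f(i+1)f(i+2)\cdots$, each truncated at the index where $P$ hits $n$. The two boundary cuts in the definition of the blocks — right after the value $n-k$ and right after $(p_1-k-1)\bmod n$ — are chosen precisely so that this truncated comparison always resolves in favor of the suffix with the smaller index: the cut after $n-k$ controls what happens at positions from which a single further step of $+k$ would overshoot $n$ (so the suffix ends), and the cut after $(p_1-k-1)\bmod n$ is the index $m_i$-adjacent boundary controlling the wrap-around at $n$. I would make this precise by a short case analysis on where the shorter of the two compared suffixes terminates.

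Concretely, the steps are: (1) show the letter of position $P[j]$ is a function of $j$ alone, and translate the suffix comparison into the shifted-sequence comparison described above; (2) handle the easy inter-block case — a suffix beginning \texttt{a} versus one beginning \texttt{b} or \texttt{c}, etc.\ — using only the alphabet order; (3) for the intra-block case, let $i$ and $i+1$ lie in the same block, so $f(i)=f(i+1)$, and chase the comparison forward one step at a time, using $p_{j+1}=p_j+k$; the comparison continues through equal letters until one suffix ends (its starting position reaches $n$ within $\le$ the other's length) or the letters differ, and I would show the block boundaries force the early-terminating suffix to be the one with smaller index, hence lexicographically smaller; (4) separately confirm that $P\ne[n,n-1,\dots,1]$ is exactly the condition ensuring the construction is nondegenerate (when $k=n-1$ the middle block could be empty in a way that collapses to the unary case already treated). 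I would lean on the worked instance in \cref{figTernaryExample} to calibrate the boundary-index bookkeeping.

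The main obstacle I anticipate is step (3): getting the modular index arithmetic around the two cut points exactly right, in particular pinning down for which starting indices $i$ the suffix $\word[p_i..n]$ is "short" (terminates before a mismatch is forced) and checking that in every such case the block assignment makes the shorter suffix the smaller one — equivalently, that neither of the two suffixes being compared is a prefix of the other in a way that would violate the desired order. This is a finite but fiddly case split driven by the positions of $n-k$ and $(p_1-k-1)\bmod n$ among the entries of $P$, and the placement of the index $m_i$ with $P[m_i]=n$; everything else is routine once that bookkeeping is set up.
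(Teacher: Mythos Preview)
Your high-level strategy matches the paper's: first dispose of the inter-block comparisons by the alphabet order, then show that any two consecutive entries $p_i,p_{i+1}$ in the same block satisfy $\word[p_i..n]\prec\word[p_{i+1}..n]$ by chasing the character-by-character comparison until a block boundary is hit. However, your central reformulation contains a genuine error. You claim that $\word[p_i..n]$ ``spells out $f(i)f(i+1)\cdots f(m_i)$'', i.e.\ that the $\ell$-th character of the suffix at $p_i$ is $\word[p_{i+\ell-1}]$. This is false: the $\ell$-th character of $\word[p_i..n]$ is $\word[p_i+\ell-1]$, and $p_i+\ell-1 \neq p_{i+\ell-1}=p_i+(\ell-1)k\bmod n$ unless $k=1$. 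In the running example of \cref{figTernaryExample}, Rotation~(1), the suffix at $p_1=5$ is $\word[5..8]=\texttt{abac}$, not $f(1)\cdots f(8)=\texttt{aaabbbbc}$. Advancing one text position corresponds to advancing by $k^{-1}$ (not by $1$) in the $P$-index.

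The salvageable kernel --- which is exactly what the paper uses --- is that when you compare $\word[p_i..n]$ against $\word[p_{i+1}..n]$ at offset~$\ell$, you are comparing $\word[p_i+\ell-1]$ with $\word[p_{i+1}+\ell-1]=\word[(p_i+\ell-1)+k]$. These two text positions differ by $k$, hence they are $p_{j}$ and $p_{j+1}$ for \emph{some} index~$j$ (namely $j=P^{-1}[p_i+\ell-1]$), but $j$ is not $i+\ell-1$ in general; as $\ell$ increases, $j$ jumps around. So the invariant to track is ``the current pair of positions being compared are consecutive entries of $P$'', and you recurse until that pair straddles a block boundary (giving a strict character inequality) or one suffix runs out (text position~$n$). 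The two cuts are placed so that these two termination events coincide with block boundaries in the right direction: the cut after $n-k$ handles the case $p_j=n-k$, $p_{j+1}=n$ (second suffix ends, and $\word[n-k]<\word[n]$); the cut after $(p_1-k-1)\bmod n$ handles the case where the next pair would be $(p_n,p_1)$, i.e.\ the rank-$n$ and rank-$1$ positions, forcing $\word[p_j]<\word[p_{j+1}]$ at that step. Your description of which cut controls what is also slightly off: the cut after $(p_1-k-1)$ is about hitting the last-ranked entry $p_n$, not about the text position~$n$. Once you correct the reformulation, step~(3) becomes the paper's two-case analysis rather than the ``fiddly case split'' you anticipate.
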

\begin{proof}
Suppose we have constructed~$\SA_\word$.
Since $\texttt{a} < \texttt{b} < \texttt{c}$, 
according to the above assignment of~\word{}, the suffixes starting with \texttt{a} lexicographically precede the suffixes starting with \texttt{b}, 
which lexicographically precede the suffixes starting with \texttt{c}.
Hence, $\SA[1..|A|], \SA[|A|+1..|A|+|B|]$ and $\SA[|A|+|B|+1..n]$ store the same text positions as $A$, $B$, and~$C$, respectively.
Consequently, it remains to show that the entries of each subarray ($A$, $B$ or $C$) are also sorted appropriately.
Let $p_i$ and $p_{i+1}$ be two neighboring entries within the same subarray.
Thus, $\word[p_i] = \word[p_{i+1}]$ holds, 
and the lexicographic order of their corresponding suffixes~$\word[p_i..n]$ and $\word[p_{i+1}..n]$ is determined by comparing the subsequent positions, 
starting with $\word[p_i + 1]$ and $\word[p_{i+1} + 1]$.
Since we have $(p_{i+1} + 1) - (p_i + 1) = p_{i+1} - p_i = k$, 
we can recursively show that these entries remain in the same order next to each other in the suffix array until either reaching the last array entry or a subarray split, that is, (1) $p_i + 1 = p_1 - k$ or (2) $p_i = n-k$. 

\begin{enumerate}
\item
When $p_i + 1$ becomes $p_1 - k \mod n = p_n$ (the last entry in \SA),
$p_{i+1}$ is in the subsequent subarray of the subarray of
$p_i = p_1 - k -1$ (remember that $A$ or $B$ ends directly after $p_1 - k -1$, {\it cf.}\ \cref{figProgressToLastEntry}).
Hence $\word[p_i] < \word[p_{i+1}]$, and $\word[p_i .. n] \prec \word[p_{i+1} .. n]$.

\item
The split at the value~$n-k$ ensures that when reaching $p_i = n-k$ and $p_{i+1} = n$, we can stop the comparison here as there is no character following $\word[p_{i+1}]$.
The split here ensures that we can compare the suffixes $\word[n-k..n]$ and $\word[n]$ by the characters $\word[n-k] < \word[n]$.
	If we did not split here, $\word[n] = \word[n-k]$, and the suffix $\word[n]$ would be a prefix of $\word[n-k..n]$, resulting in $\word[n] \prec \word[n-k..n]$ (which yields a contradiction unless $p_n = n$).
\end{enumerate}

To sum up, the text positions stored in each of $A$, $B$ and~$C$ are in the same order as in $\SA_{\word}$ since 
the $j-1$ subsequent text positions of each consecutive pair of entries $p_i$ and $p_{i+1}$ are consecutive in~$P$ for
the smallest integer $j \in [1..n]$ such that $p_{i+1} + jk \in \lbrace p_1 - 1, n \rbrace$.

\end{proof}

\begin{figure}[tb]
\centering{\includegraphics[width=\linewidth]{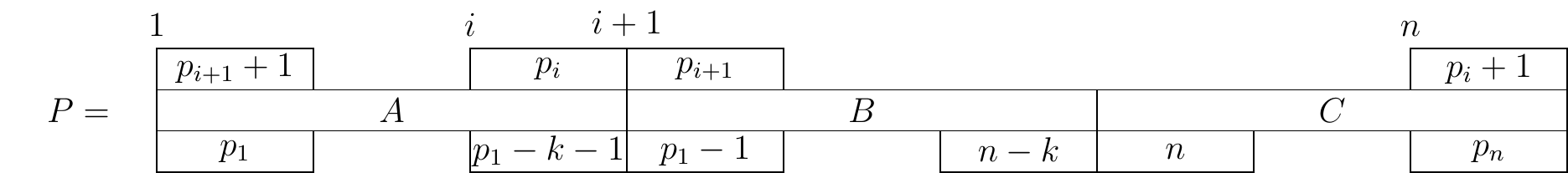}
  }\caption{Setting of the proof of \cref{thmTernaryAlphabet} with the condition~$p_i + 1 = p_1 - k = p_n$. In the figure we assume that the entry $p_1 - k -1$ appears before $n-k$ in~$P$.}
  \label{figProgressToLastEntry}
\end{figure}

\begin{figure}
	\centering{\hfill
\begin{minipage}{0.2\textwidth}
BWT matrix of 
\texttt{babbabac}:\\
\\
\texttt{abacbab\textsl{b}}\\
\texttt{abbabac\textsl{b}}\\
\texttt{acbabba\textsl{b}}\\
\texttt{babacba\textsl{b}}\\
\texttt{babbaba\textsl{c}}\\
\texttt{bacbabb\textsl{a}}\\
\texttt{bbabacb\textsl{a}}\\
\texttt{cbabbab\textsl{a}}\\
\end{minipage}
\hfill
\begin{minipage}{0.2\textwidth}
BWT matrix of 
\texttt{ccaccacb}:\\
\\
\texttt{acbccac\textsl{c}}\\
\texttt{accacbc\textsl{c}}\\
\texttt{bccacca\textsl{c}}\\
\texttt{cacbcca\textsl{c}}\\
\texttt{caccacb\textsl{c}}\\
\texttt{cbccacc\textsl{a}}\\
\texttt{ccacbcc\textsl{a}}\\
\texttt{ccaccac\textsl{b}}\\
\end{minipage}
\hfill
\begin{minipage}{0.2\textwidth}
BWT matrix of 
\texttt{bbabbabb}:\\
\\
\texttt{abbabbb\textsl{b}}\\
\texttt{abbbbab\textsl{b}}\\
\texttt{babbabb\textsl{b}}\\
\texttt{babbbba\textsl{b}}\\
\texttt{bbabbab\textsl{b}}\\
\texttt{bbabbbb\textsl{a}}\\
\texttt{bbbabba\textsl{b}}\\
\texttt{bbbbabb\textsl{a}}\\
\end{minipage}
\hfill
	}\caption{BWTs defined by the lexicographic sorting of all rotations of strings whose suffix arrays are cyclic rotations.
		This figure shows (from left to right) the BWT matrices of the strings of Rotation~(1) and~(6) of \cref{figTernaryExample} as well as of Case~(2) from \cref{figBinaryExample}.
		Reading the last column of a BWT matrix (whose characters are italic) from top downwards yields the BWT defined on the BWT matrix.
	While the BWT defined on the BWT matrix and the one defined by the suffix array coincides for the strings of \cref{eqInducedTernaryWord} due to \cref{thmMatrixBWT}, this is not the case in general for the binary strings studied in \cref{secBinary}, where we observe that
	$\BWT_{\texttt{bbabbabb}} = \texttt{bbbbbaab}$ defined by the suffix array differs from \texttt{bbbbbaba} (the last column on the right).
}
	\label{figBWTmatrix}
\end{figure}

Knowing the suffix array of the ternary string~$\word$ of \cref{eqInducedTernaryWord}, we can give a characterization of its BWT\@.
We start with the observation that both BWT definitions (rotation based and suffix array based) coincide for the strings of \cref{eqInducedTernaryWord} (but do not in general as highlighted in the introduction, \textit{cf.}~\cref{figBWTmatrix}), and then continue with insights in how the BWT looks like.

\begin{theorem}\label{thmMatrixBWT}
	Given an arithmetically progressed permutation $P := [p_1, \ldots, p_n]  \not= [n, \allowbreak n-1, \ldots,1] $ with ratio~$k$ 
	and the string~$\word$ of \cref{eqInducedTernaryWord},
	the BWT of \word{} defined on the BWT matrix coincides with the BWT of \word{} defined on the suffix array.
\end{theorem}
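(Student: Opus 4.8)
The plan is to establish that sorting the $n$ cyclic rotations $R_j:=\word[j..n]\word[1..j-1]$ of $\word$ lexicographically yields exactly the order $R_{P[1]}\prec R_{P[2]}\prec\cdots\prec R_{P[n]}$. Granting this, the $i$-th row of the BWT matrix is $R_{P[i]}$ and its last character is $\word[P[i]-1\bmod n]=\word[\SA_{\word}[i]-1\bmod n]=\BWT[i]$ (using $\SA_{\word}=P$ from \cref{thmTernaryAlphabet}), so the BWT read off the matrix coincides, position by position, with the suffix-array-based BWT. For the reduction to make sense the rotations must be pairwise distinct, i.e.\ $\word$ must be primitive; this holds because if $\word=u^e$ with $e\ge 2$ and $d:=|u|<n$, then --- since $\word[v]=\word[v+d\bmod n]$ cyclically and the character at position $v$ depends only on which of the subarrays $A,B,C$ contains the $P$-position $\ISA[v]$ --- the nontrivial cyclic shift of $P$-positions by $d\,k^{-1}\bmod n$ would carry every subarray onto itself, which is impossible for a circular partition into (here) at least two nonempty intervals.

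I then compare two neighbouring rotations $R_{P[i]}$ and $R_{P[i+1]}$ for $i\in[1..n-1]$, aiming to show $R_{P[i]}\prec R_{P[i+1]}$. Their characters at offset $\ell\ge 0$ are $\word[P[i]+\ell\bmod n]$ and $\word[P[i+1]+\ell\bmod n]=\word[P[i]+\ell+k\bmod n]$; by the assignment in \cref{eqInducedTernaryWord} these agree exactly when the values $v:=P[i]+\ell\bmod n$ and $v+k\bmod n$ lie in the same subarray, equivalently when the $P$-positions $\ISA[v]=i+\ell k^{-1}\bmod n$ and $\ISA[v]+1$ do not straddle a boundary of $P=ABC$ (here $k^{-1}$ is the inverse of $k$ modulo $n$, available since $k,n$ are coprime). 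Hence the comparison is decided at the first $\ell$ for which $j:=i+\ell k^{-1}\bmod n$ is a boundary position, which is well defined since $\ell\mapsto i+\ell k^{-1}\bmod n$ runs through all of $[1..n]$ for $\ell\in[0..n-1]$. The partition has exactly two kinds of boundary: \emph{ascending} ones (between $A$ and $B$, between $B$ and $C$, or between $A$ and $C$ when $B$ is empty), where the character strictly increases, and the single \emph{wrap} boundary between $P[n]$ and $P[1]$, where the character drops from $\word[P[n]]\in\{\texttt{b},\texttt{c}\}$ --- the last nonempty subarray is $B$ or $C$, never $A$, since $A$ does not extend to $P$-position $n$ --- to $\word[P[1]]=\texttt{a}$. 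If the first boundary met by the chase is ascending we get $R_{P[i]}\prec R_{P[i+1]}$; if it were the wrap boundary the order would reverse, so the crux is to exclude the latter.

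For this I pin down where these boundaries sit along the chase. From $P[j']\equiv p_1+(j'-1)k\pmod n$, the wrap boundary is at $P$-position $n$ (with $nk\equiv0$) while the ascending split placed right after the value $(p_1-k-1)\bmod n$ is at the $P$-position $j_0$ with $j_0 k\equiv -1\pmod n$ (so $j_0\ne n$ since $n\ge 2$). Starting at $P$-position $i$ and advancing by $k^{-1}$ per step, the chase reaches a $P$-position $j$ first after the step $\ell_j\in[0..n-1]$ with $\ell_j k^{-1}\equiv j-i\pmod n$; for the wrap this is $\ell_n\equiv -ik$ and for $j_0$ it is $\ell_{j_0}\equiv -1-ik\equiv\ell_n-1\pmod n$. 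Because $i\in[1..n-1]$ and $\gcd(k,n)=1$ force $ik\bmod n\in[1..n-1]$, we have $\ell_n=n-(ik\bmod n)\in[1..n-1]$, hence $\ell_{j_0}=\ell_n-1\in[0..n-2]$ and so $\ell_{j_0}<\ell_n$. Thus the chase meets the ascending boundary $j_0$ strictly before it could meet the wrap boundary; as the wrap is the only non-ascending boundary, the boundary that decides the comparison is ascending, giving $R_{P[i]}\prec R_{P[i+1]}$ for every $i\in[1..n-1]$ and hence the claimed rotation order and the theorem. I expect the main obstacle to be exactly this modular bookkeeping --- correctly placing $j_0$ and the wrap boundary on the step-$k^{-1}$ walk and comparing the step counts; the exclusion $P\ne[n,n-1,\ldots,1]$ appears to be inherited from \cref{thmTernaryAlphabet} rather than needed here, as the ascending boundary $j_0$ exists for every arithmetically progressed permutation.
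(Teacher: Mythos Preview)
Your proposal is correct and follows essentially the same approach as the paper: both show $R_{P[i]}\prec R_{P[i+1]}$ for every $i\in[1..n-1]$ by arguing that the first position where the two rotations differ cannot be the wrap-around (comparing $\word[p_n]$ with $\word[p_1]$), because the ascending split placed right after the value $p_1-k-1$ is encountered exactly one step earlier. Your $\ISA$-walk bookkeeping with $\ell_{j_0}=\ell_n-1$ is a slightly more formal recasting of the paper's direct observation that at offset~$j-1$ one already compares $\word[p_1-k-1]$ with $\word[p_1-1]$, which lie in different subarrays; your separate primitivity argument is sound but redundant, since the chain of strict inequalities already forces all rotations to be distinct.
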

\begin{proof}
	According to \cref{thmTernaryAlphabet}, $\SA_{\word} = P$, and therefore the BWT of \word{} defined on the suffix array is given by $\BWT_\word[i] = \word[p_i - 1 \bmod n]$.
    The BWT matrix is constituted of the lexicographically ordered cyclic rotations of $\word$.
	The BWT $\BWT_{\textup{matrix}}$ based on the BWT matrix is obtained by reading the last column of the BWT matrix from top downwards (see \cref{figBWTmatrix}).
	Formally, let $Q[i]$ be the starting position of the lexicographically $i$-th smallest rotation $\word[Q[i]..n]\word[1..Q[i]-1]$.
	Then $\BWT_{\textup{matrix}}[i]$ is $\word[Q[i]-1 \mod n]$ if $Q[i] > 1$, 
	or $\word[n]$ if $Q[i] = 1$.
    We prove the equality $P = Q$ by showing that, for all $i \in [1..n-1]$, the rotation $R_{i} := \word[p_i..n]\word[1..p_i-1]$ starting at $p_i = \SA_\word[i]$ is lexicographically smaller than the rotation 
	$R_{i+1} :=\word[p_{i+1}..n]\word[1..p_{i+1}-1]$
	starting at $p_{i+1} = \SA_\word[i+1]$.
	We do that by comparing both rotations $R_{i}$ and $R_{i+1}$ characterwise:
    
Let $j$ be the first position where $R_i$ and $R_{i+1}$ differ, i.e., 
	$R_i[j] \neq R_{i+1}[j]$ and $R_i[q] = R_{i+1}[q]$ for every $q \in [1..j)$.

First we show that $j \not= p_n - p_i + 1 \bmod n$ by a contradiction:
	Assuming that $j = p_n - p_i + 1 \bmod n$,
	we conclude that $j \neq 1$ by the definition of $i \in [1..n-1]$.
	Since $k$ is the ratio of~$P$,
        we have 
		\[
		R_i[j - 1] = \word[p_i + j - 2 \bmod n] =  \word[p_n - 1 \bmod n] = \word[p_1 - k - 1 \bmod n]
	\]
		and 
		$R_{i+1}[j-1] = \word[p_1 - 1 \bmod n]$.
        By \cref{eqInducedTernaryWord},
		$p_1 - k - 1 \mod n$ and $p_1 - 1 \mod n$ belong to different subarrays of~$P$, 
		therefore 
		$\word[p_1 - k - 1] \neq \word[p_1 - 1]$ and 
		$R_i[j - 1] \neq R_{i+1}[j -1]$,
		contradicting the choice of $j$ as the first position where $R_i$ and $R_{i+1}$ differ.

		This concludes that $j \le n$ (hence, $R_i \not= R_{i+1}$) and $j \neq p_n - p_i + 1 \bmod n$.
			Hence, $R_i[j] = \word[p_i + j - 1 \bmod n]$ and $R_{i+1}[j] = \word[p_{i+1} + j - 1 \bmod n] = \word[p_i + j - 1 + k \bmod n]$
			are characters given by two consecutive entries in $\SA_{\word}$, i.e.,
			$\SA_{\word}[q] = p_i + j - 1 \bmod n$ and $\SA_{\word}[q+1] = p_i + j - 1 + k \bmod n$ for a $q \in [1..n-1]$.
			Thus $R_i[j] \le R_{i+1}[j]$, and by definition of $j$ we have $R_i[j] < R_{i+1}[j]$, leading finally to $R_i \prec R_{i+1}$.
			Hence, $Q = P$.
\end{proof}

\begin{figure}[tb]
\centering{\includegraphics[width=0.47\linewidth]{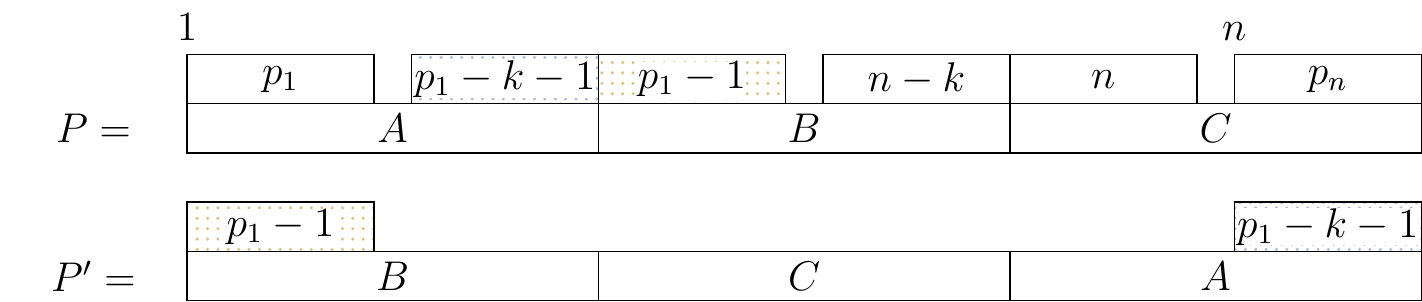}
    \includegraphics[width=0.47\linewidth]{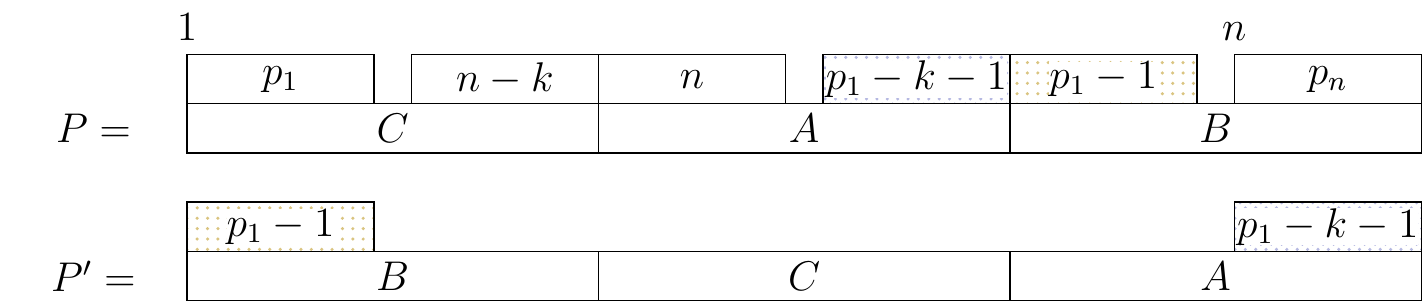}
  }\caption{Setting of \cref{eqInducedTernaryWord}
	  with the distinction whether the entry $p_1 - k -1$ appears before (left) or after (right) $n-k$ in~$P$, yielding a different shape of the $\BWT_{\word}$ defined as $\BWT_{\word[i]} = \word{}[P'[i]]$ with $P'[i] = \SA_{\word}[i]-1 \mod n$.
  }
	  \label{figBwtSubarray}
\end{figure}

\begin{lemma}\label{lemTernaryBWT}
	Let $P := [p_1, \ldots, p_n]  \not= [n, \allowbreak n-1, \ldots,1]$ be an arithmetically progressed permutation with ratio~$k$.
Further, let~$\word[1..n]$ be given by \cref{eqInducedTernaryWord} such that $\SA_\word = P$ according to \cref{thmTernaryAlphabet}.
Given that $p_t = p_1 - 1 - k \mod n$ for a $t \in [1..n]$,
$\BWT_{\word}$ is given by the $t$-th rotation of $\word[\SA[1]] \cdots \word[\SA[n]]$, i.e.,
$\BWT_{\word}[i] = \word[P[i+t \mod n]]$ for $i \in [1..n]$.
\end{lemma}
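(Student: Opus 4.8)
The plan is to reduce everything to modular arithmetic on indices and on values, using only the suffix-array-based definition of the BWT together with $\SA_\word = P$ from \cref{thmTernaryAlphabet}. By definition $\BWT_\word[i] = \word[\SA_\word[i] - 1 \bmod n] = \word[p_i - 1 \bmod n]$, so the goal is to prove $p_i - 1 \bmod n = P[i + t \bmod n]$ for every $i \in [1..n]$. First I would note that $t$ is well defined and unique: since $P$ is a permutation of $[1..n]$, the value $p_1 - 1 - k \bmod n \in [1..n]$ occurs exactly once among $p_1, \ldots, p_n$.

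Next I would extract the one arithmetic identity that drives the proof, namely that the hypothesis $p_t = p_1 - 1 - k \bmod n$ forces $tk \equiv -1 \pmod n$. Unrolling the defining recurrence $p_{j+1} = p_j + k \bmod n$ gives $p_j \equiv p_1 + (j-1)k \pmod n$ for all $j$ (a formula that is $n$-periodic in $j$ because $nk \equiv 0$), so $p_t \equiv p_1 + (t-1)k \pmod n$; equating with $p_t \equiv p_1 - 1 - k \pmod n$ and cancelling $p_1 - k$ gives $tk \equiv -1 \pmod n$. Equivalently $t \equiv n - k^{-1} \pmod n$, which matches the closed form stated in the caption of \cref{figTernaryExample}; coprimality of $k$ and $n$ (guaranteed because $P$ exists) makes $k^{-1}$ meaningful, although only the congruence $tk \equiv -1$ is used below.

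With this in hand, I would fix $i \in [1..n]$, set $m := i + t \bmod n \in [1..n]$ so that $m \equiv i + t \pmod n$, and compute
\[
P[m] \;\equiv\; p_1 + (m-1)k \;\equiv\; p_1 + (i + t - 1)k \;\equiv\; p_1 + (i-1)k + tk \;\equiv\; p_1 + (i-1)k - 1 \;\equiv\; p_i - 1 \pmod n ,
\]
using $p_m \equiv p_1 + (m-1)k$, then $m \equiv i + t$, then $tk \equiv -1$, and finally $p_i \equiv p_1 + (i-1)k$. Since both $P[m]$ and $p_i - 1 \bmod n$ lie in $[1..n]$ and are congruent modulo $n$, they are equal. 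Therefore $\word[P[i + t \bmod n]] = \word[p_i - 1 \bmod n] = \BWT_\word[i]$, i.e., $\BWT_\word$ is the $t$-th rotation of $\word[\SA[1]] \cdots \word[\SA[n]]$.

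The argument is essentially mechanical once \cref{thmTernaryAlphabet} provides $\SA_\word = P$; the only thing requiring care is keeping the two reductions modulo $n$ apart --- the index reduction $i + t \bmod n$ living in $[1..n]$ versus the value reduction $p_\cdot - 1 \bmod n$ --- and justifying the last step, where a congruence modulo $n$ is upgraded to an equality by the range constraint. I expect no real obstacle; the only genuine subtlety is notational, namely respecting the paper's convention that $x \bmod n$ is taken in $[1..n]$ (so $0 \bmod n = n$, and $\word[p_i - 1 \bmod n] = \word[n]$ precisely when $p_i = 1$).
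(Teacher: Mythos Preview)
Your proof is correct and essentially parallels the paper's argument, though you take a slightly more computational route. The paper observes that the sequence $P' := [p_1-1 \bmod n,\ldots,p_n-1 \bmod n]$ is again an arithmetically progressed permutation with the same ratio~$k$, and hence must be a cyclic shift of~$P$; since $p'_n = p_1 - 1 - k \bmod n = p_t$, the shift amount is~$t$. You instead extract the congruence $tk \equiv -1 \pmod n$ directly from the closed form $p_j \equiv p_1 + (j-1)k$ and verify $P[i+t \bmod n] \equiv p_i - 1$ by a one-line substitution. Both arguments are the same identity viewed from two angles; yours is a bit more explicit and self-contained (and recovers the formula $t = n - k^{-1}$ along the way), while the paper's phrasing makes the ``cyclic shift of~$P$'' picture more visible, which feeds naturally into \cref{corSimpleBWT} about the run structure of~$\BWT_\word$.
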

\begin{proof}
Since $P$ is an arithmetically progressed permutation with ratio~$k$ 
then so is the sequence $P' := [p'_1,\ldots,p'_n]$ with $p'_i = p_i - 1 \mod n$.
In particular,  $P'$ is a cyclic shift of $P$ with
$p'_n = p_1 - 1 - k \mod n$ because $p'_1 = p_1-1$.
However, $p_1 - 1 - k$ is a split position of one of the subarrays~$A$, $B$, or $C$,
meaning that $P'$ starts with one of these subarrays and ends with another of them ({\it cf.}~\cref{figBwtSubarray}).
Consequently, there is a $t$ such that $p_t = p'_n$, and 
we have the property that $\BWT_{\word}$ with $\BWT_{\word}[i] = \word[P'[i]]$ is the $t$-th rotation of $\word[\SA[1]] \cdots \word[\SA[n]]$.
\end{proof}
We will determine the parameter~$t = n - k^{-1} \mod n$ after \cref{eqInducedBinaryInverse} in \cref{secInversePermutation}, 
where $k^{-1}$ is defined such that $k \cdot k^{-1} \mod n = 1 \mod n$.
With Lemma~\ref{lemTernaryBWT},  we obtain the following corollary which shows that the number of runs in $\BWT_\word{}$ for \word{} defined in \cref{eqInducedTernaryWord} are minimal:

\begin{corollary}
	\label{corSimpleBWT}
	For an arithmetically progressed permutation  $P := [p_1, \ldots, p_n]  \not= [n, \allowbreak n-1, \ldots,1]$ and the string~$\word$ defined by \cref{eqInducedTernaryWord},
$\BWT_{\word}$ consists of exactly 2 runs if $\word$ is binary, while it consists of exactly 3 runs if $\word$ is ternary.
\end{corollary}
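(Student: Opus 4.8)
The plan is to read the exact shape of $\BWT_{\word}$ off \cref{lemTernaryBWT} and then count its runs directly, so essentially no new machinery is required. First I would record the string $W := \word[\SA[1]]\cdots\word[\SA[n]] = \word[p_1]\cdots\word[p_n]$: by the defining assignment \cref{eqInducedTernaryWord}, the first $|A|$ entries of $P$ constitute the subarray $A$ and receive the letter $\texttt{a}$, the next $|B|$ entries constitute $B$ and receive $\texttt{b}$, and the last $|C|$ entries constitute $C$ and receive $\texttt{c}$; hence $W = \texttt{a}^{|A|}\texttt{b}^{|B|}\texttt{c}^{|C|}$, a block of exponent $0$ being absent. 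Since $\word$ contains the letter $\texttt{a}$ (resp.\ $\texttt{b}$, $\texttt{c}$) exactly when $|A| \ge 1$ (resp.\ $|B| \ge 1$, $|C| \ge 1$), the string $\word$ is ternary precisely when all three of $|A|,|B|,|C|$ are positive, and binary precisely when exactly one of them vanishes (two vanishing would force $P = [n,n-1,\ldots,1]$, which is excluded by hypothesis).

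Next I would invoke \cref{lemTernaryBWT}, which tells us that $\BWT_{\word}$ is the rotation of $W$ that cuts immediately after the entry $p_t$, where $p_t = (p_1 - 1 - k) \bmod n$. The key point is that $(p_1 - 1 - k) \bmod n$ is exactly the second of the two values after which \cref{eqInducedTernaryWord} splits $P$; since $P$ is cut right after that value, the index $t$ is the last index of subarray $A$ (if that value precedes $n-k$ in $P$) or the last index of subarray $B$ (otherwise), so that $t \in \{\, |A|,\ |A|+|B| \,\}$. Consequently $\BWT_{\word}$ is the rotation of $\texttt{a}^{|A|}\texttt{b}^{|B|}\texttt{c}^{|C|}$ beginning at the $\texttt{b}$-block or at the $\texttt{c}$-block, i.e.\ $\BWT_{\word} \in \{\, \texttt{b}^{|B|}\texttt{c}^{|C|}\texttt{a}^{|A|},\ \texttt{c}^{|C|}\texttt{a}^{|A|}\texttt{b}^{|B|} \,\}$.

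Finally I would count runs. Because $W$, and hence each of the two candidate strings above, uses the letters in the strict order $\texttt{a} < \texttt{b} < \texttt{c}$, no two cyclically adjacent blocks carry the same letter, so deleting a single empty block never merges two runs into one. Therefore, in the ternary case every block is nonempty and each candidate has exactly three runs, whereas in the binary case exactly one block is empty and each candidate is a concatenation of two nonempty blocks over two distinct letters, hence has exactly two runs; this is the assertion. The only step that demands care --- and the closest thing to an obstacle --- is the bookkeeping in the second paragraph: checking that the rotation amount delivered by \cref{lemTernaryBWT} genuinely lands on a block boundary of $W$, including the degenerate cases where the two split values coincide (so that $B$ is empty) or one of them sits at an end of $P$.
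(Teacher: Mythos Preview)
Your proposal is correct and follows exactly the route the paper intends: the paper states the corollary immediately after \cref{lemTernaryBWT} with no further argument, relying on that lemma (and the accompanying \cref{figBwtSubarray}) to exhibit $\BWT_{\word}$ as a rotation of $\texttt{a}^{|A|}\texttt{b}^{|B|}\texttt{c}^{|C|}$ that starts at a block boundary, from which the run count is immediate. Your write-up simply spells out these details, including the edge cases, and nothing is missing.
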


\begin{theorem}
\label{thmAPPunique}
Given an arithmetically progressed permutation $P := [p_1, \ldots, p_n]$ with ratio~$k$ such that  $p_1 \not\in \lbrace 1, k+1, n \rbrace$,
the string \word{} given in \cref{eqInducedTernaryWord} is unique.
\end{theorem}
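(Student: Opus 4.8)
The plan is to prove that the string $\word$ of \cref{eqInducedTernaryWord} is the unique string over an alphabet of size at most three whose suffix array is~$P$; the hypothesis $p_1\notin\{1,k+1,n\}$ will moreover force the two split values $n-k$ and $(p_1-k-1)\bmod n$ to be distinct and to cut $P$ into three nonempty blocks $A,B,C$, so that $\word$ genuinely uses three letters, which is then seen to be the minimum alphabet size ({\it cf.}~\cref{tabOverview}). Since $\SA_\word=P$ already holds by \cref{thmTernaryAlphabet}, only uniqueness remains.

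The engine is an elementary property of suffix arrays. Let $\word'$ be \emph{any} string with $\SA_{\word'}=P$ and set $c_i:=\word'[p_i]$. Then $c_1\le c_2\le\dots\le c_n$, and for $i\in[1..n-1]$ the strict inequality $c_i<c_{i+1}$ is \emph{forced} exactly when $\ISA_P[p_i+1]>\ISA_P[p_{i+1}+1]$, where $\ISA_P[v]$ is the rank of the text position~$v$ in $P$ and we put $\ISA_P[n+1]:=0$ (the empty suffix is smallest): if $c_i=c_{i+1}$, then $\word'[p_i..n]\prec\word'[p_{i+1}..n]$ is equivalent to $\word'[p_i+1..n]\prec\word'[p_{i+1}+1..n]$, i.e.\ to $\ISA_P[p_i+1]<\ISA_P[p_{i+1}+1]$ (equality is impossible since $p_i\neq p_{i+1}$). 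I would then unwind this condition using $p_{i+1}=p_i+k\bmod n$ and the fact that $v$ and $v+k\bmod n$ occupy consecutive positions of $P$. A short case distinction, on whether $p_i$, $p_{i+1}$, or $p_i+1$ equals~$n$ and on the two possible wrap-arounds of $p_i+k$ and $p_i+1+k$, shows that $c_i<c_{i+1}$ is forced precisely when $p_i=n-k$ (whence $p_{i+1}=n$, so $\word'[n..n]$ is a proper prefix of $\word'[n-k..n]$ and $\ISA_P[p_{i+1}+1]=0<\ISA_P[p_i+1]$) or $p_i=p_n-1$ (whence $\word'[p_i+1..n]=\word'[p_n..n]$ is the lexicographically largest suffix, of rank~$n$, while $\word'[p_{i+1}+1..n]$ has rank below $n$ because the unique rank-$n$ text position $p_n=p_i+1$ differs from $p_{i+1}+1$); in all remaining cases $\ISA_P[p_i+1]<\ISA_P[p_{i+1}+1]$, so $c_i=c_{i+1}$ is allowed. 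These two forced positions are exactly where \cref{eqInducedTernaryWord} splits~$P$: from $p_1\neq k+1$ we get $p_n\ge 2$, so $(p_1-k-1)\bmod n=p_n-1$ is a genuine text position; from $p_1\neq n$ we get $n-k\neq p_n$, so the block of $n-k$ is not the last one and its index lies in $[1..n-1]$; and from $p_1\neq1$ we get $n-k\neq p_n-1$, so the two indices $\ISA_P[n-k]$ and $\ISA_P[p_n-1]$ are distinct. The same three inequalities make $A,B,C$ all nonempty.

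To conclude, let $\word'$ be any string over an alphabet of size at most three with $\SA_{\word'}=P$. By the above, $(c_i)$ is nondecreasing and strictly increases at the two distinct indices $\ISA_P[n-k]$ and $\ISA_P[p_n-1]$ of $[1..n-1]$; a nondecreasing sequence with two strict increases already exhausts three values, so $(c_i)$ uses exactly three characters, which in increasing order must be $\mathtt{a},\mathtt{b},\mathtt{c}$, and has no increase other than at those two indices. Hence $(c_i)=\mathtt{a}^{|A|}\mathtt{b}^{|B|}\mathtt{c}^{|C|}$ with block boundaries $|A|$ and $|A|+|B|$ equal to those two indices, which is precisely the assignment of \cref{eqInducedTernaryWord}; so $\word'[p_i]=\word[p_i]$ for every~$i$, i.e.\ $\word'=\word$. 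Taking $\word'=\word$ shows in passing that $\word$ indeed uses all three letters.

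I expect the main obstacle to be the unwinding step $\ISA_P[p_i+1]>\ISA_P[p_{i+1}+1]\iff p_i\in\{n-k,\,p_n-1\}$: this is where one must reconcile the string's successor map $v\mapsto v+1$ (with $n$ mapping to the empty suffix, of rank~$0$) with the permutation's step $v\mapsto v+k\bmod n$ (which wraps cyclically back into $[1..n]$), treating the boundary configurations $p_i=n$, $p_{i+1}=n$, $p_i+1=p_n$, and the two possible wrap-arounds one at a time, and it is exactly there that each of the three excluded values of~$p_1$ is invoked. This is essentially the same bookkeeping already carried out, in a slightly different guise, within the proof of \cref{thmTernaryAlphabet}.
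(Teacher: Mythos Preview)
Your proposal is correct and follows essentially the same approach as the paper: both arguments pin down the two mandatory split positions (after the entry $n-k$ and after the entry $(p_1-k-1)\bmod n=p_n-1$) and conclude that, with only three characters available, these forced splits determine $\word$ uniquely. The paper argues each split by a direct contradiction (dropping the split at $n-k$ makes $\word'[n]$ a proper prefix of $\word'[n-k..n]$; dropping the split at $p_n-1$ reverses the order of two adjacent suffixes), whereas you package the same two observations into the general ``forced increase'' criterion $\ISA_P[p_i+1]>\ISA_P[p_{i+1}+1]$ and then verify it fires exactly at those two indices---a slightly more systematic but equivalent route.
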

\begin{proof}
	The only possible way to define another string~$\word'$ would be to change the borders of the subarrays~$A$, $B$, and~$C$.
	Since $p_1 \not\in \lbrace 1, n \rbrace $, $n-k$ and $n$, as well as $p_1 - k - 1$ and $p_1 - 1$, are stored as a consecutive pair of text positions in~$P$.
	\begin{itemize}
		\item 
			If $P$ is not split between its consecutive text positions~$n-k$ and~$n$, 
			then $\word'[n-k] = \word'[n]$.
			Consequently, we have the contradiction $\word'[n] \prec \word'[n-k..n]$.
\item 
	If $P$ is not split between its consecutive text positions~$(p_1 - k - 1) \mod n$ and~$(p_1 - 1) \mod n$, 
	then $\word'[p_1 - k - 1 \mod n] = \word'[p_1 - 1 \mod n]$.
	Since $p_1 \neq k+1$, and $\word'[p_1 - k \mod n] = \word'[p_n] > \word'[p_1]$, 
	this leads to the contradiction $\word'[(p_1 - k - 1 \bmod n)..n] \succ \word'[(p_1 - 1 \bmod n)..n]$,
{\it cf.}\ \cref{figProgressToLastEntry}.
	\end{itemize}
\end{proof}

Following this analysis of the ternary case we proceed to consider binary strings.
A preliminary observation is given in \cref{figTernaryExample},
which shows, for the cases $p_1$ is 1 and $n$ in Theorem~\ref{thmAPPunique}, namely Rotations (5) and (8), that a rotation of $n-k$ in the permutation gives a rotation of one in the corresponding binary strings.  
We formalize this observation in the following lemma, drawing a connection between binary strings whose suffix arrays are arithmetically progressed and start with $1$ or $n$.

\begin{lemma}
\label{lemRotate}
Let $P := [p_1, \ldots, p_n]$ be an arithmetically progressed permutation with ratio $k$ and $p_1 = 1$ for a binary string~\word{} over $\Sigma = \{\mathtt{a},  \mathtt{b}\}$ with $\SA_{\word{}} = P$.
Suppose that the number of $\mathtt{a}$'s in~\word{} is $m$ and that $\word' = \word[2] \cdots \word[n]\word[1]$ is the first rotation of~\word.
Then $\SA_{\word'}$ is the $m$-th rotation of $P$ with $\SA_{\word'}[1] = n$.
Furthermore, $\BWT_{\word} = \BWT_{{\word}'}$.
\end{lemma}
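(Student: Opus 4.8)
The plan is to draw a few structural consequences from the hypothesis $p_1=1$, then identify $\word'$ with the string that \cref{eqInducedTernaryWord} assigns to the rotated permutation, and finally invoke \cref{thmTernaryAlphabet}. Since $\SA_{\word}[1]=1$, the whole string $\word=\word[1..n]$ is the lexicographically smallest suffix, so $\word$ is a Lyndon word. In particular $\word[1]=\mathtt{a}$: if $\word[1]=\mathtt{b}$ then no suffix starts with $\mathtt{a}$, forcing $\word=\mathtt{b}^n$, whose suffix array is $[n,n-1,\ldots,1]$ and hence has $p_1=n\neq1$. And $\word[n]=\mathtt{b}$: if $\word[n]=\mathtt{a}$ then $\word[n..n]=\mathtt{a}$ is a proper prefix of $\word$, hence $\prec\word$, contradicting minimality. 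Thus $1\le m\le n-1$. Grouping the suffixes by their first character, $\SA_{\word}[1..m]$ lists the positions of the $\mathtt{a}$'s and $\SA_{\word}[m+1..n]$ the positions of the $\mathtt{b}$'s; as $\word[n..n]=\mathtt{b}$ is a prefix of every longer $\mathtt{b}$-suffix it is the smallest of them, so $\SA_{\word}[m+1]=n$, i.e.\ $p_{m+1}=n$. Since $p_{m+1}=p_1+mk\bmod n=1+mk\bmod n$, this yields the pivotal congruence $mk\equiv-1\pmod n$.

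Let $P'':=[p_{m+1},p_{m+2},\ldots,p_n,p_1,\ldots,p_m]$ be the $m$-th rotation of $P$; it is again an arithmetically progressed permutation with ratio $k$, and its $i$-th entry equals $p_1+(i+m-1)k\bmod n=p_i+mk\bmod n$, which by the congruence simplifies to $P''[i]=p_i-1\bmod n$. Because $\word'[j]=\word[j+1]$ for $j\in[1..n-1]$ and $\word'[n]=\word[1]$ (this is what ``first rotation of $\word$'' means), we get $\word'[P''[i]]=\word[p_i]=\word[\SA_{\word}[i]]$ in all cases, so $\word'[P''[1]]\cdots\word'[P''[n]]=\word[\SA_{\word}[1]]\cdots\word[\SA_{\word}[n]]=\mathtt{a}^m\mathtt{b}^{n-m}$; equivalently, $\word'$ carries the character $\mathtt{a}$ exactly at the positions $P''[1],\ldots,P''[m]$ and $\mathtt{b}$ at all other positions.

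Next I would show that $\word'$ is precisely the string that \cref{eqInducedTernaryWord} builds from $P''$. That construction splits $P''$ right after the values $n-k$ and $(P''[1]-k-1)\bmod n=(n-k-1)\bmod n$. Using $P''[i]=p_i-1\bmod n$ together with $p_n\equiv 1-k\pmod n$, the value $n-k$ sits at index $n$ of $P''$, so that split is vacuous (the third block is empty); and using $mk\equiv-1\pmod n$ once more, the value $n-k-1$ sits at index $m$ of $P''$. Hence, when $k\neq n-1$, the construction cuts $P''$ into $P''[1..m]$ and $P''[m+1..n]$ and assigns $\mathtt{a}$ to the first block and $\mathtt{b}$ to the second, reproducing exactly the string $\word'$ described above. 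As $P''$ has ratio $k\neq n-1$ it differs from $[n,n-1,\ldots,1]$, so \cref{thmTernaryAlphabet} gives $\SA_{\word'}=P''$, and in particular $\SA_{\word'}[1]=P''[1]=p_1-1\bmod n=n$. The case $k=n-1$, excluded from \cref{thmTernaryAlphabet}, is settled directly: the congruence then forces $m=1$, so $\word=\mathtt{a}\mathtt{b}^{n-1}$, $\word'=\mathtt{b}^{n-1}\mathtt{a}$, and $\SA_{\word'}=[n,n-1,\ldots,1]$ by \cref{thmUnaryEnumeration}, which is the first (that is, the $m$-th) rotation of $P=[1,n,n-1,\ldots,2]$. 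Finally $\BWT_{\word}=\BWT_{\word'}$ drops out of what is already in hand, since $\BWT_{\word'}[i]=\word'[\SA_{\word'}[i]-1\bmod n]=\word'[P''[i]-1\bmod n]=\word'[p_i-2\bmod n]=\word[p_i-1\bmod n]=\BWT_{\word}[i]$.

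I expect the main obstacle to be the bookkeeping of the third paragraph: checking that the two splits of $P''$ dictated by \cref{eqInducedTernaryWord} fall precisely at indices $n$ and $m$. This is exactly where the congruence $mk\equiv-1\pmod n$ (equivalently $p_{m+1}=n$) is essential, and it is also the reason the ratio $k=n-1$ has to be peeled off separately, since there $P''=[n,n-1,\ldots,1]$ lies outside the scope of \cref{thmTernaryAlphabet}; everything else is a routine computation modulo~$n$.
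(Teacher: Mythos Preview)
Your proof is correct, and it takes a genuinely different route from the paper's. The paper argues directly at the level of suffix comparisons: it defines $P'[i]:=p_i-1\bmod n$ and shows $\SA_{\word'}=P'$ by checking, via a short case analysis, that for any two suffixes $\word[p_i..n]\prec\word[p_j..n]$ with $p_i,p_j\ge 2$, appending the character $\mathtt{a}$ (which is exactly what rotating does to every proper suffix) preserves the order, i.e.\ $\word[p_i..n]\mathtt{a}\prec\word[p_j..n]\mathtt{a}$; the new suffix $\word'[n..n]=\mathtt{a}$ is trivially smallest. It then observes that $p_{m+1}=n$ (smallest $\mathtt{b}$-suffix) forces $P'$ to be the $m$-th rotation of $P$, and finishes the $\BWT$ claim in one line from $P'[i]=p_i-1$.

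You instead bootstrap from the machinery already in place: you derive the pivotal congruence $mk\equiv-1\pmod n$, use it to recognise that the $m$-th rotation $P''$ satisfies $P''[i]=p_i-1\bmod n$, verify that the two split values of \cref{eqInducedTernaryWord} for $P''$ land at indices $m$ and $n$, and then invoke \cref{thmTernaryAlphabet} to conclude $\SA_{\word'}=P''$. This is more structural and makes the link to the canonical ternary construction explicit, at the cost of having to peel off the boundary case $k=n-1$ (where $P''=[n,n-1,\ldots,1]$ falls outside \cref{thmTernaryAlphabet}) and of a little extra index bookkeeping. The paper's argument is more elementary and uniform in $k$; yours recycles prior results and yields the same conclusion with no additional case distinction on how the suffixes compare. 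Your explicit $\BWT$ computation is also a touch more careful than the paper's one-line justification.
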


\begin{proof}
Since $p_1 = 1$, $\word[1] = \texttt{a}$ and $\word[n] = \texttt{b}$.
In the following, we show that $P' = \SA_{\word'}$ for $P' := [p'_1, \ldots, p'_n] := [p_1-1 \bmod n, \ldots , p_{n}-1 \bmod n]$ with $p'_1 = p_1-1 = n$ (since $\word'[n] = \texttt{a}$).
For that, we show that each pair of suffixes in $\SA_{\word}$ is kept in the same relative order in $P'$ (excluding $\SA_{\word}[1] = 1$):

Consider two text positions $p_i, p_j \in [p_2, \ldots, p_{n}]$ with
$\word[p_i..n] 
= u_1 \cdots u_s 
\prec \word[p_j..n] 
= v_1 \cdots v_t$. 
\begin{itemize}
	\item If $u_h \neq v_h$ for the least $h \in [1..\min\{s,t\}]$, 
		then $u_1 \cdots u_s \texttt{a} \prec v_1 \cdots v_t \texttt{a}$. 
	\item Otherwise, $u_1 \cdots u_s$ is a proper prefix of $v_1 \cdots v_t = u_1 \cdots u_s v_{s+1} \cdots v_t$. 
		\begin{itemize}
			\item If $v_{s+1} = \texttt{b}$, then
			$u_1 \cdots u_s \texttt{a} \prec u_1 \cdots u_s  \texttt{b}  v_{s+2} \cdots v_t \texttt{a} = v_1 \cdots v_t \texttt{a}$.
			\item Otherwise ($v_{s+1} = \texttt{a}$), 
			$u_1 \cdots u_s \texttt{a}$ is a proper prefix of $v_1 \cdots v_t \texttt{a}$,
			and similarly 
			$u_1 \cdots u_s \texttt{a} \prec u_1 \cdots u_s  \texttt{a}  v_{s+2} \cdots v_t \texttt{a} = v_1 \cdots v_t \texttt{a}$.  
		\end{itemize}
\end{itemize}
Hence the relative order of these suffixes given by $[p_2,\ldots,p_{n}]$ and $[p'_2, \ldots,p'_{n}]$ is the same. 
In total, we have $p'_i = p_i - 1 \mod n$ for $i \in [1..n]$, hence $P'$ is an arithmetically progressed permutation with ratio $k$.
Given the first $m$ entries in $P$ represent all suffixes of~\word{} starting with \texttt{a},
$P'$ is the $m$-th rotation of $P$ since
$p'_1 = n$ is the $(m+1)$-th entry of $P$, i.e., the smallest suffix starting with \texttt{b} in~\word{}.
Finally, since the strings $\word{}$ and $\word'$ are rotations of each other, their BWTs are the same.
\end{proof}
Like the parameter~$t$ of Lemma~\ref{lemTernaryBWT}, we will determine the parameter~$m$ after \cref{eqInducedBinaryInverse} in \cref{secInversePermutation}.

\subsection{Binary Alphabet}\label{secBinary}

We start with the construction of a binary string from an arithmetically progressed permutation:

\begin{theorem}\label{thmBinaryAlphabet}
	Given an arithmetically progressed permutation $P := [p_1, \ldots, p_n] \not= [n, \allowbreak n-1,\ldots,1]$ with ratio~$k$ such that  $p_1 \in \lbrace 1, k+1, n \rbrace$,
we can modify~\word{} of \cref{eqInducedTernaryWord} to be
a string over the binary alphabet $\{\mathtt{a},\mathtt{b}\}$ with
$\SA_\word = P$.
\end{theorem}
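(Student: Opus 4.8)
The plan is to take the ternary string $\word$ from \cref{eqInducedTernaryWord} and show that in each of the three cases $p_1 \in \{1, k+1, n\}$, one of the three subarrays $A$, $B$, $C$ is empty, so that $\word$ actually already uses only two characters — or, if not literally empty, can be merged with an adjacent subarray without disturbing the suffix order. Recall that $\word$ is built by splitting $P$ right after the value $n-k$ and right after the value $(p_1-k-1)\bmod n$. So the first step is to compute, in each case, where these two split values sit inside $P$, relative to $p_1 = P[1]$ and $p_n = P[n] = (p_1-k)\bmod n$.

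First I would handle $p_1 = k+1$. Here $(p_1 - k - 1)\bmod n = 0 \bmod n = n$, so the "second" split point coincides with the text position $n$, which is $P[n]$ (since $p_n = (p_1-k)\bmod n = 1$, wait — recompute: $p_n = p_1 - k = 1$; then $p_{n}+k = p_1$, consistent; and $n = p_{n-1}$? we have $p_{n-1} = p_n - k = 1-k \equiv n+1-k$, hmm). The clean way: the split "right after $(p_1-k-1)\bmod n$" and the split "right after $n-k$" — I need to check whether these two splits are adjacent or whether one of them is the very end of $P$, collapsing a subarray. Concretely, in the case $p_1 = k+1$, the value $(p_1-k-1)\bmod n = n$ appears at position $i$ in $P$ where $P[i] = n$; since $p_1 = k+1$, we get $P[i+1] = n+k \bmod n = k = p_1 - 1$, but $p_1-1$... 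The point I expect to emerge is that the split after $(p_1-k-1)$ falls at the end of the array (position $n$) or immediately before position $1$, so that subarray $C$ (or $A$) is empty and $\word$ is binary on $\{\texttt{a},\texttt{b}\}$. Similarly for $p_1 = 1$: then $(p_1-k-1)\bmod n = -k \bmod n = n-k$, so the two split values $n-k$ and $(p_1-k-1)\bmod n$ are the *same* value — the two splits coincide, collapsing one subarray entirely, and $\word$ is binary. For $p_1 = n$: $(p_1-k-1)\bmod n = n-k-1$, which is the text position immediately preceding $n-k$; so the split after $n-k-1$ and the split after $n-k$ are at consecutive positions in $P$ only if $n-k-1$ and $n-k$ are consecutive in $P$, i.e. only if $k = 1$ — in general they are not adjacent in $P$, so I would instead argue that the middle subarray between them can be recolored. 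Let me reconsider: the cleanest uniform statement is that in each of the three cases, the ternary word already uses at most two distinct characters, OR two of the three symbols can be identified without breaking the argument of \cref{thmTernaryAlphabet}; I would verify this case by case using the position computations above, and conclude $\SA_\word = P$ still holds by re-running (or invoking) the proof of \cref{thmTernaryAlphabet}, checking that the two essential split conditions (the split at $n-k$, needed so that $\word[n-k] < \word[n]$, and the split at $p_1-k-1$, needed so that $\word[\SA[i]] < \word[\SA[i+1]]$ when we wrap past position $n$) are still respected.

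The main obstacle I anticipate is the case $p_1 = n$ (ratio $k \neq n-1$), where the two split values $n-k$ and $n-k-1$ are genuinely distinct and not adjacent in $P$, so no subarray is literally empty and one must argue that merging two of the three color classes is legitimate. The key observation there should be: when $p_1 = n$, the last entry of $P$ is $p_n = (n-k)\bmod n = n-k$ itself, so the split "after $n-k$" is the split at the *very end* of $P$ — it does nothing — and only the split after $(p_1-k-1)\bmod n = n-k-1$ remains active, cutting $P$ into just two pieces. Hence $\word$ is binary. So across all three cases the uniform mechanism is "one of the two split points is inert (either it lands at the array boundary, or it coincides with the other split point)", and I would organize the proof as three short paragraphs, one per value of $p_1$, each consisting of the modular-arithmetic computation locating the split points plus a one-line appeal to \cref{thmTernaryAlphabet} to conclude $\SA_\word = P$. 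The final sentence would note that the resulting binary $\word$ is exactly the one appearing in the examples of \cref{figTernaryExample}, rows (5) and (8), matching \cref{tabOverview}.
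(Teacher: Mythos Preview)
Your analysis of $p_1=1$ and $p_1=n$ is correct and matches the paper exactly: for $p_1=1$ the two split values coincide at $n-k$, and for $p_1=n$ the split after $n-k$ lands at the end of $P$ (since $p_n=n-k$), so in both cases one subarray of the ternary construction is empty and $\word$ is already binary.

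The gap is the case $p_1=k+1$. Your ``uniform mechanism'' (one split is inert, either by landing at the array boundary or by coinciding with the other split) does \emph{not} cover this case. Here $(p_1-k-1)\bmod n = n$, and since $P$ has ratio $k$, the entry $n$ sits immediately after the entry $n-k$ in $P$. Hence the two splits are at \emph{consecutive} positions, so neither is inert and no subarray is empty: we get $B=\{n\}$, a genuine singleton, and the ternary $\word$ really uses three characters. Your proposal notices something is off in this case but then guesses wrongly that a split will fall at the array end; your own computation $p_n=1$, $p_{n-1}=n+1-k$ already shows this is not so.

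What the paper does for $p_1=k+1$ is the merging step you alluded to only vaguely: since $B=\{n\}$ and the suffix $\word[n..n]$ has length~$1$, one may reassign $\word[n]\gets\texttt{c}$ (instead of $\texttt{b}$) and it automatically remains the smallest suffix among all those now starting with $\texttt{c}$, i.e.\ among the positions in $B\cup C$. This collapses $\{\texttt{a},\texttt{b},\texttt{c}\}$ to $\{\texttt{a},\texttt{c}\}$, which one then relabels $\{\texttt{a},\texttt{b}\}$. So the missing ingredient in your plan is precisely this observation: identify that $B=\{n\}$, and justify the merge by the length-$1$ suffix argument rather than by an inert split.
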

\begin{proof}
If $p_1 = 1$, then $P$ is split after the occurrences of the values~$n-k$ and $-k = n - k \mod n$, which gives only two non-empty subarrays.
If $p_1 = n$, $P$ is split after the occurrence of~$n-k-1$, which implies that $C$ is empty since $p_n = n - k$.
Hence, \word{} can be constructed with a binary alphabet in those cases, {\it cf.} Fig.~\ref{figTernaryExample}.

For the case $p_1 = k+1$, $P$ is split after the occurrences of the values~$n-k$ and $k+1-k-1 \mod n = n \mod n$, 
so $B$ contains only the text position~$n$.
By construction, the requirement is that the suffix $\word[n]$ is smaller than all other suffixes starting with \texttt{c}.
So instead of assigning the unique character $\word[n] \gets \texttt{b}$ like in \cref{thmTernaryAlphabet},
we can 
assign $\word[n] \gets \texttt{c}$, which still makes $\word[n]$ the smallest suffix starting with \texttt{c}.
We conclude this case by converting the binary alphabet $\{\texttt{a},\texttt{c}\}$ to $\{\texttt{a},\texttt{b}\}$. {\it Cf.} Fig.~\ref{figTernaryExample}, where \word{} in Rotation (6) has become $\texttt{bbabbabb}$ with period $n-k=3$.
\end{proof}

The main result of this section is the following theorem.
There, we characterize all binary strings whose suffix arrays are arithmetically progressed permutations.
More precisely, we identify which of them are unique\footnote{The exact number of these binary strings is not covered by Theorem~\ref{thmAPPunique}.},
periodic, or a Lyndon word.

\newcommand*{\wordS}{\ensuremath{\word_s}}
\newcommand*{\wordC}{\ensuremath{\word_{\hspace{-0.1em}\textup{c}}}}

\begin{figure}
  \setlength{\tabcolsep}{0.4em}
	\centering
	\begin{tabular}{l
@{\hskip 1em}
*{8}{l}
@{\hskip 1em}
r*{8}{c}l
@{\hskip 1em}
c@{\hskip 1em}c}
		\toprule
		Case & \multicolumn{8}{c}{\word{}} & \multicolumn{10}{c}{\SA{}} & $p_s$ & $s$ \\
		\cmidrule(r){2-9}
		\cmidrule{11-18}
		&
		{\scriptsize 1} &
		{\scriptsize 2} &
		{\scriptsize 3} &
		{\scriptsize 4} &
		{\scriptsize 5} &
		{\scriptsize 6} &
		{\scriptsize 7} &
		{\scriptsize 8} &
		  &
		{\scriptsize 1} &
		{\scriptsize 2} &
		{\scriptsize 3} &
		{\scriptsize 4} &
		{\scriptsize 5} &
		{\scriptsize 6} &
		{\scriptsize 7} &
		{\scriptsize 8} &
		  &
		\\
		(1) & 
		\texttt{b} & 
		\texttt{a} & 
		\texttt{b} & 
		\texttt{b} & 
		\texttt{a} & 
		\texttt{b} & 
		\texttt{b} & 
		\texttt{a} & 
	[ &
		8, &
		5, &
		2, &
		7, &
		4, &
		1, &
		6, &
		3 &
		] &
		2 & 3 \\
		(2) &
	\texttt{b} & 
	\texttt{b} & 
	\texttt{a} & 
	\texttt{b} & 
	\texttt{b} & 
	\texttt{a} & 
	\texttt{b} & 
	\texttt{b} & 
		[ &
		6, &
		3, &
		8, &
		5, &
		2, &
		7, &
		4, &
		1 &
		] &
		3 & 2 \\
		(3) &
		\texttt{a} & 
		\texttt{b} & 
		\texttt{a} & 
		\texttt{b} & 
		\texttt{b} & 
		\texttt{a} & 
		\texttt{b} & 
		\texttt{b} & 
		[ &
		1, &
		6, &
		3, &
		8, &
		5, &
		2, &
		7, &
		4 &
		] &
		3 & 3 \\
		\bottomrule
	\end{tabular}
	\caption{All binary strings of length~$8$ whose suffix arrays are arithmetically progressed permutations with ratio~$k=5$. 
		\cref{thmBinaryStringCharacteristic} characterizes these strings (and also gives the definition of~$p_s$).
		Cases~(1) and~(3) also appear in \cref{figTernaryExample} at Rotation~(8) and~(5), respectively, while
		Case~(2) can be obtained from Rotation~(6) by exchanging the last character with~\texttt{c}. Cases~(1) and~(2) both have period $n-k=3$, and Case~(3) is a Lyndon word. 
	}
	\label{figBinaryExample}
\end{figure}

\begin{figure}
	\centering{\begin{tabular}{ccc}
		\includegraphics[width=0.3\linewidth]{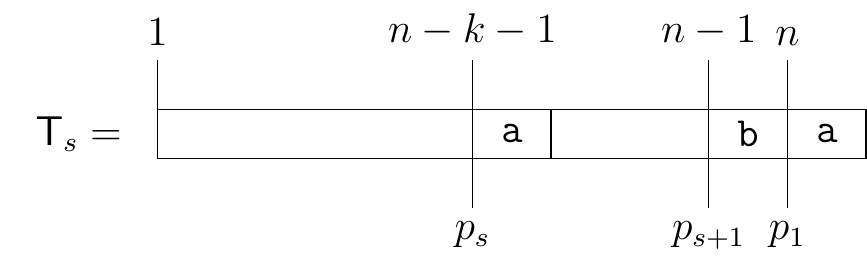}
		&
		\includegraphics[width=0.3\linewidth]{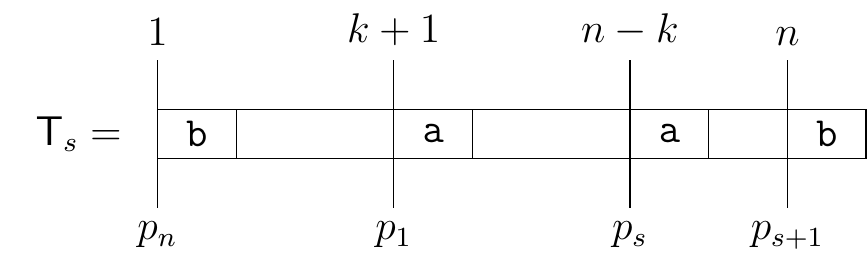}
		&
		\includegraphics[width=0.3\linewidth]{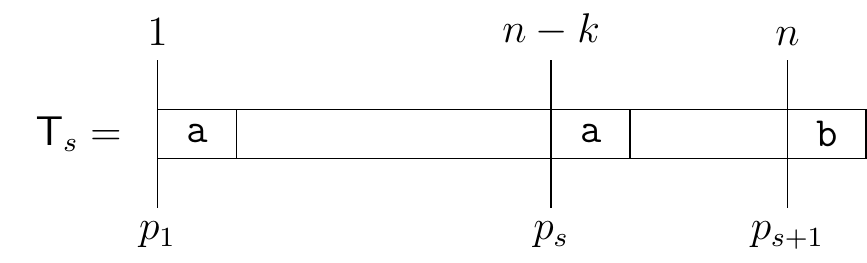}
		\\
			Case~1 & Case~2 & Case~3
		\end{tabular}
	}\caption{Sketches of the cases of \cref{thmBinaryStringCharacteristic}. 
	$\wordS$ is uniquely determined if the suffix array~\SA{} of $\wordS$ is arithmetically progressed with ratio~$k$ and the first entry~$\SA[1] \in \{1, k+1, n-k\}$ is given. }
	\label{figBinaryStringCharacteristic}
\end{figure}

\newcommand*{\String}{\ensuremath{\mathsf{S}}}

\begin{theorem} \label{thmBinaryStringCharacteristic}
	Let $n$ and $k \in [1..n-1]$ be two coprime integers.
	If $k \not= n-1$,
there are \emph{exactly three} binary strings of length $n$ whose suffix arrays are arithmetically progressed permutations with ratio $k$.
Each such solution $\wordS \in \{\mathtt{a},\mathtt{b}\}^+$ 
is characterized by 
\begin{equation}\label{eqInducedBinaryWord}
\wordS[i] = 
\begin{cases}
\mathtt{a} &  \text{~for~} i \in \SA_{\wordS}[1..s] \text{, or} \\
\mathtt{b} &  \text{~otherwise,}
\end{cases}
\end{equation}
for all text positions~$i \in [1..n]$ and an index~$s \in [1..n-1]$, called the \Define{split index}.

	The individual solutions are obtained by fixing the values for $p_1$ and 
	$p_s$, the position of the lexicographically largest suffix starting with $\mathtt{a}$, of $\SA_{\wordS} = [p_1, \ldots, p_n]$:
	\begin{enumerate}
		\item $p_1 = n$ and $p_s = n-k-1$, 
		\item $p_1 = k+1$ and $p_s = n-k$, and
		\item $p_1 = 1$ and $p_s = n-k$.
	\end{enumerate}
	The string~\wordS{} has 
period~$n-k$ in Cases~1 and~2,
	while \wordS{} of Case~3 is a Lyndon word, which is not periodic by definition.
	
	For $k=n-1$, Cases~2 and~3 each yields exactly one binary string, but Case~1 yields $n$ binary strings according to \cref{thmUnaryEnumeration}.
\end{theorem}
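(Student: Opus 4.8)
\emph{Strategy.} The plan is to reduce every binary solution to two discrete parameters and then rule out all but three choices. Since in a binary string every suffix starting with $\mathtt a$ is lexicographically smaller than every suffix starting with $\mathtt b$, the suffix array of such a string lists first all positions carrying $\mathtt a$ and then all positions carrying $\mathtt b$; hence, if $\SA_{\wordS}=[p_1,\dots,p_n]$ is arithmetically progressed with ratio $k$, it is already determined by $p_1$ together with the split index $s$ (the number of $\mathtt a$'s), and $\wordS$ is then exactly the string of \cref{eqInducedBinaryWord}. Conversely each pair $(p_1,s)$ with $s\in[1..n-1]$ gives one candidate string, and since $\gcd(k,n)=1$ distinct pairs produce distinct suffix arrays, so the problem reduces to determining the admissible pairs.

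\emph{Restricting $p_1$.} First I would record a dichotomy at the last text position, using that $\SA_{\wordS}[1]$ is the position of the least suffix: if $\wordS[n]=\mathtt a$ then the one-letter suffix $\mathtt a=\wordS[n..n]$ is least, forcing $p_1=n$; if $\wordS[n]=\mathtt b$ then $\mathtt b=\wordS[n..n]$ is the least $\mathtt b$-suffix, forcing $\SA_{\wordS}[s+1]=n$, i.e.\ $p_s=n-k$. In particular $p_1=n\iff\wordS[n]=\mathtt a$. Now, for $p_1\notin\{1,k+1,n\}$ we have $p_1\notin\{1,n\}$, so by \cref{thmAPPunique} the only string with that suffix array is the one of \cref{eqInducedTernaryWord}, and a direct check shows that for $p_1\notin\{1,n\}$ none of its subarrays $A,B,C$ is empty, whence that string genuinely uses three letters and is not binary. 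Hence every binary solution has $p_1\in\{1,k+1,n\}$.

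\emph{Counting the solutions.} For $p_1\in\{1,k+1\}$ we have $p_1\ne n$ (this is where $k\ne n-1$ is used, for $k+1\ne n$), so $\wordS[n]=\mathtt b$ and thus $p_s=n-k$; together with $p_1$ and $k$ this pins down $s$, giving at most one binary string, and these match Cases~3 and~2. The case $p_1=n$ (Case~1) is the step I expect to be the main obstacle, because there the dichotomy leaves $s$ unconstrained. Here I would first show $\wordS[n-1]=\mathtt b$: if instead $\wordS[n-1]=\mathtt a$ then $\wordS[n-1..n]=\mathtt{aa}$, and no suffix lies strictly between $\mathtt a$ and $\mathtt{aa}$, so $\wordS[n-1..n]$ would be the second-smallest suffix, forcing $\SA_{\wordS}[2]=n-1$; but $\SA_{\wordS}[2]=p_1+k\bmod n=k$, whence $k=n-1$, contradicting the hypothesis. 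Given $\wordS[n-1]=\mathtt b$, the suffix $\wordS[n-1..n]=\mathtt{ba}$ is the least $\mathtt b$-suffix, so $\SA_{\wordS}[s+1]=n-1$ and $p_s=n-k-1$, which again pins down $s$. Existence of one binary string for each $p_1\in\{1,k+1,n\}$ is \cref{thmBinaryAlphabet}, and since $1$, $k+1$, $n$ are pairwise distinct when $k\ne n-1$, the three solutions have distinct suffix arrays; hence there are exactly three.

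\emph{Structural properties and the degenerate case.} For Case~3, $\SA_{\wordS}[1]=1$ says $\wordS$ is strictly smaller than all its proper suffixes, which is the defining property of a Lyndon word; Lyndon words are border-free, hence not periodic. For Cases~1 and~2 I would prove period $n-k$ by letting $q_i$ denote the position of the text index $i$ in $P$ and computing $q_{i+(n-k)}\equiv q_i-1\pmod n$, using $(n-k)k^{-1}\equiv-1\pmod n$ (here $k^{-1}$ is the inverse of $k$ modulo $n$, which exists as $\gcd(k,n)=1$); thus position $i+n-k$ is the immediate $\SA$-predecessor of position $i$, and the two positions carry the same letter unless $q_i\in\{1,s+1\}$, i.e.\ unless $i\in\{P[1],P[s+1]\}$. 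One then checks that $P[1]$ and $P[s+1]$ (equal to $n$ and $n-1$ in Case~1, and to $k+1$ and $n$ in Case~2) do not lie in $[1..k]$ when $k\ne n-1$, so $\wordS[i]=\wordS[i+n-k]$ for all $i\in[1..k]$. Finally, when $k=n-1$ the values $1$, $k+1$, $n$ collapse with $k+1=n$, so Cases~1 and~2 share the single permutation $[n,n-1,\dots,1]$; the count of binary strings with this suffix array is then given by \cref{thmUnaryEnumeration} (taken with $\sigma=2$), while Case~3 ($p_1=1$) still yields a single Lyndon word.
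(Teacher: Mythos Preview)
Your argument is correct and follows the same skeleton as the paper: parametrize binary solutions by $(p_1,s)$, pin down $p_s$ (hence $s$) for each admissible $p_1$ via the position of the least $\mathtt b$-suffix, appeal to \cref{thmBinaryAlphabet} for existence, and finally verify the period/Lyndon claims. The one substantive difference is the step that rules out $p_1\notin\{1,k+1,n\}$. The paper does this directly inside the proof: it shows $\String[p_1-1]=\mathtt b$ and $\String[p_n-1]=\mathtt a$, so that the smallest $\mathtt b$-suffix would have to start at $p_1-1$, contradicting that $\String[n]=\mathtt b$ already gives a shorter $\mathtt b$-suffix. You instead invoke \cref{thmAPPunique}: for $p_1\notin\{1,k+1,n\}$ the string of \cref{eqInducedTernaryWord} is the only one (over an alphabet of size~$\le 3$) with suffix array $P$, and since for $p_1\notin\{1,n\}$ all three subarrays $A,B,C$ are nonempty, that string is genuinely ternary, so no binary string exists. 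This is a clean shortcut; just be explicit that ``unique'' in \cref{thmAPPunique} is uniqueness over alphabets of size at most three (which is what its proof shows), since otherwise the claim ``the only string with that suffix array'' is literally false for larger alphabets (cf.\ \cref{secLargerAlphabets}).

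Your periodicity argument via $q_{i+(n-k)}\equiv q_i-1\pmod n$ is the same computation the paper does, just read through $\ISA$ rather than through the forward relation $\wordS[j]=\wordS[j+k\bmod n]$; the exceptional indices $\{P[1],P[s+1]\}$ you obtain are exactly $\{p_n+k,p_s+k\}\bmod n$, matching the paper's exceptions $\{p_n,p_s\}$ after the shift. For the degenerate case $k=n-1$, note that \cref{thmUnaryEnumeration} with $\sigma=2$ gives $\binom{n+1}{n}=n+1$ strings over $\{\mathtt a,\mathtt b\}$ with suffix array $[n,\dots,1]$, not $n$; reconcile this with the theorem's count by being explicit about whether the unary strings $\mathtt a^n,\mathtt b^n$ are included.
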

\begin{proof}
	Let \String{} be a binary string of length~$n$, and suppose that $\SA_{\String} = P := [p_1, \ldots, p_n]$ is an arithmetically progressed permutation with ratio~$k$.
	Further let $p_s$ be the position of the largest suffix of $\String{}$ starting with \texttt{a}.
	Then $\String[p_i..n] \prec \String[p_{i+1}..n]$ and thus $\String[p_i] \le \String[p_{i+1}]$.
	We have $\String[j] = \String[j+k \mod n]$ for all $j \in [1..n] \setminus \{p_n, p_s\}$ since 
	\begin{itemize}
		\item $p_n = \SA_{\String}[n]$ is the starting position of the largest suffix 
			($\String[p_n] = \texttt{b} \not= \texttt{a} = \String[p_1] = \String[p_n + k]$).
		\item $\String[p_s..n]$ and $\String[p_{s+1}..n]$ are the lexicographically largest suffix starting with \texttt{a} and the lexicographically smallest suffix starting with \texttt{b}, respectively, such that $\String[p_{s}] = \texttt{a} \not= \texttt{b} = \String[p_{s+1}]$.
	\end{itemize}
	To sum up, since $\String[p_s..n] \prec \String[p_{s+1}..n]$ by construction, $\String[p_i..n] \prec \String[p_{i+1}..n]$ holds for $p_i > p_{i+1}$ whenever $p_{i} \not= p_n$.
	This, together with the coprimality of $n$ and $k$, determines $p_s$ uniquely in the three cases
	({\it cf.} \cref{figBinaryExample} for the case that $n=8$ and $k=5$, and \cref{figBinaryStringCharacteristic} for sketches of the proof):
	\begin{enumerate}
		\item[Case~1:] 
		We first observe that the case $k = n-1$ gives us $P = [n,n-1,\ldots,1]$, and this case was already treated with \cref{thmUnaryAlphabet}.
		In the following, we assume $k < n-1$, and under this assumption we have
			$s > 1$, $\wordS[n] = \wordS[p_1] =\texttt{a}$ and $\wordS[n-1] = \texttt{b}$ 
			(otherwise $\wordS[n-1..n]$ would be the second smallest suffix, i.e., $P[2] = n-1$ and hence $k = n-1$).
			Consequently, $\wordS[n-1..n] = \wordS[p_{s+1}..n]$ is the smallest suffix starting with \texttt{b}, namely \texttt{ba}, and therefore $p_s = n-1-k$.

		\item[Case~$p_1 \not= n$:]
			If $p_1 \not= n$, then $\wordS[n] = \texttt{b}$ (otherwise $\wordS[n] \prec \wordS[p_1..n]$).
			Therefore, $\wordS[n]$ is the smallest suffix starting with \texttt{b}, and consequently $p_s = n-k$.
\end{enumerate}
	For the periodicity, with
	$\wordS[j] = \wordS[j + k \mod n] = \wordS[j - (n-k) \mod n]$ for $j \in [1..n] \setminus \{p_1, p_s\}$
	we need to check two conditions:
	\begin{itemize}
		\item If $p_n - (n - k) > 0$, then $\wordS[p_n - (n - k)] = \wordS[p_1] \not= \wordS[p_n]$ breaks the periodicity.
		\item If $p_s - (n - k) > 0$, then $\wordS[p_s - (n-k)] = \texttt{a} \not= \texttt{b} = \wordS[p_{s+1}]$ breaks the periodicity.
	\end{itemize}
For Case~1, $p_n = n-k$ and $p_s = n-k-1$ (hence $p_n - (n - k) =0$ and $p_s - (n-k) = -1$), thus Case~1 is periodic.

\noindent Case~2 is analogous to Case~1.

\noindent For Case~3, \wordS{} does not have period~$n-k$ as $p_n = n-k+1$, and hence $p_n - (n - k) > 0$.
It cannot have any other period since Case~3 yields a Lyndon word (because the lexicographically smallest suffix~$\wordS[p_1..n] = \wordS[1..n]$ starts at the first text position).
Note that Case~3 can be obtained from Case~2 by setting $\wordS[1] \gets \texttt{a}$ (the smallest suffix $\wordS[k+1..n]$ thus becomes the second smallest suffix).

Finally, we 
need to show that no other value for $p_1$ admits a binary string~\String{} having an arithmetically progressed permutation~$P := [p_1,\ldots,p_n]$ with ratio~$k$ as its suffix array.
So suppose that $p_1 \notin \{1, k+1, n\}$, then this would imply the following:
	\begin{itemize}
		\item $\String[p_1] = \texttt{a}$ because the smallest suffix starts at text position~$p_1$, and
		\item $\String[p_1-1] = \texttt{b}$ because of the following:
		First, the text position $\String[p_1-1]$ exists due to $p_1 > 1$.
		Second, since $p_1 < n$, there is a text position~$j \in [p_1+1..n]$ such that $\String[p_1] = \ldots = \String[j-1] = \texttt{a}$ and $\String[j] = \texttt{b}$ (otherwise $\String[n]$ would be the smallest suffix).
		If $\String[p_1-1] = \texttt{a}$, then the suffix~$\String[p_1-1..n]$ starting with~$\texttt{a}^{j-p_1+1}\texttt{b}$  
		is lexicographically smaller than the suffix~$\String[p_1..n]$ starting with~$\texttt{a}^{j-p_1}\texttt{b}$.
		Hence, $\String[p_1-1] = \texttt{b}$ must hold.
\item $p_n-1 \ge 1$ (since $p_1 \not= k+1$) and $\String[p_n-1] = \texttt{a}$.
		If $\String[p_n-1] = \texttt{b}$, then the suffix $\String[p_n-1..n]$ has a longer prefix of \texttt{b}'s than the suffix $\String[p_n..n]$, and is therefore lexicographically larger.
	\end{itemize}
	
Since $\String[p_n-1] = \texttt{a}$ and $\String[p_1-1] = \texttt{b}$ with $p_n-1+k \mod n = p_1-1$,
	the smallest suffix starting with \texttt{b} is located at index $p_1-1$.
	This is a contradiction as $p_1 \neq n$ implies $\String[n] = \texttt{b}$ 
	(if $\String[n] = \texttt{a}$, then $\SA[1] = n$ instead of $\SA[1] = p_1$) and thus the smallest suffix starting with \texttt{b} is located at index $n$ (this is a contradiction since we assumed that this suffix starts at $p_1 - 1 \in [1..n-1]$).
	This establishes the claim for $p_1$.
\end{proof}

For a given arithmetically progressed permutation with ratio~$k$, and first entry $p_1 \in \{1,k+1,n\}$, the string~\wordS{} of \cref{thmBinaryStringCharacteristic}
coincides with $\word$ of \cref{thmBinaryAlphabet}.\\

\subsection{Inverse Permutations}\label{secInversePermutation}
Since the inverse~$P^{-1}$ of a permutation~$P$ with $P^{-1}[P[i]] = i$ is also a permutation, 
one may wonder whether the inverse~$P^{-1}$ of an arithmetically progressed permutation is also arithmetically progressed.
We affirm this question in the following.
For that, we use the notion of the \Define{multiplicative inverse} $k^{-1}$ of an integer~$k$ (to the congruence class $[1..n] = \mathbb{Z}/n\mathbb{Z}$),
which is given by $k^{-1} \cdot k \mod n = 1 \mod n$.
The multiplicative inverse~$k^{-1}$ is uniquely defined if $k$ and $n$ are coprime.

\begin{theorem}\label{thmInversePermutation}
	The inverse~$P^{-1}$ of an arithmetically progressed permutation~$P$ with ratio~$k$ is an arithmetically progressed permutation with ratio~$k^{-1}$ and 
	$P^{-1}[1] = (1 - P[n]) \cdot k^{-1} \bmod n$.
\end{theorem}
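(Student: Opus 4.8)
The plan is to work with the closed form of an arithmetically progressed permutation rather than with its defining recurrence. Unfolding $p_{i+1} = p_i + k \bmod n$ gives $p_i \equiv p_1 + (i-1)k \pmod n$ for every $i \in [1..n]$, and hence $p_b - p_a \equiv (b-a)k \pmod n$ for all $a,b \in [1..n]$. Since a length-$n$ arithmetically progressed permutation with ratio~$k$ exists, the coprimality lemma gives $\gcd(k,n)=1$, so the multiplicative inverse $k^{-1}$ is well defined modulo~$n$; being again coprime to~$n$, it lies in $[1..n-1]$ (recall $n \ge 2$), so it is a legitimate ratio.

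First I would determine the ratio of $P^{-1}$. For $v \in [1..n]$ write $\pi(v) := P^{-1}[v]$, the unique index with $p_{\pi(v)} = v$ (it exists and is unique because $P$ is a permutation of $[1..n]$). Fix $v$ and let $w := (v+1) \bmod n$ be its cyclic successor, so $w \equiv v+1 \pmod n$. Then $p_{\pi(w)} - p_{\pi(v)} = w - v \equiv 1 \pmod n$, and comparing with $p_{\pi(w)} - p_{\pi(v)} \equiv (\pi(w)-\pi(v))k \pmod n$ yields $(\pi(w)-\pi(v))k \equiv 1 \pmod n$. Multiplying by~$k^{-1}$ gives $\pi(w) \equiv \pi(v) + k^{-1} \pmod n$, i.e.\ $P^{-1}[w] = P^{-1}[v] + k^{-1} \bmod n$. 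As $v$ was arbitrary (and the case $v=n$ respects the cyclic convention $P^{-1}[n+1]:=P^{-1}[1]$), $P^{-1}$ is arithmetically progressed with ratio~$k^{-1}$.

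Next I would pin down $P^{-1}[1]$. From $p_{n+1} := p_1$ we get $p_1 \equiv p_n + k \pmod n$, so the closed form rewrites as $p_i \equiv p_n + ik \pmod n$. The index $i$ with $p_i = 1$ therefore satisfies $ik \equiv 1 - p_n \pmod n$, whence $i \equiv (1 - p_n)\,k^{-1} \pmod n$; since $P^{-1}[1] = i \in [1..n]$, this is exactly $P^{-1}[1] = (1 - P[n])\cdot k^{-1} \bmod n$.

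No step presents a genuine obstacle; the one thing to watch is the paper's normalisation of $\bmod n$ to the range $[1..n]$ — one must check that each congruence class has a unique representative there, so that the displayed congruences upgrade to equalities of integers, and that the wraparound conventions ($n \bmod n = n$, $(n+1)\bmod n = 1$, and $P^{-1}[n+1] := P^{-1}[1]$) are used consistently.
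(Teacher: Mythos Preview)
Your proof is correct and follows essentially the same approach as the paper: both exploit the closed form $p_i \equiv p_n + ik \pmod n$ to read off that advancing the value by~$1$ corresponds to advancing the index by~$k^{-1}$, and both solve $p_i \equiv 1$ for~$i$ to obtain $P^{-1}[1]$. The only cosmetic difference is that the paper shifts the index by $k^{-1}$ and observes the value increases by~$1$, whereas you fix the value increment and solve for the index increment; these are the same computation.
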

\begin{proof}
	Let $x := P[i]$ for an index~$i \in [1..n]$.
	Then $P[i+k^{-1} \bmod n] = P[i] + k\cdot k^{-1} = x + k\cdot k^{-1} \bmod n = x+1\bmod n$.
	For the inverse permutation~$P^{-1}$ this means that $P^{-1}[x] = i$ and $P^{-1}[x+1 \bmod n] = i+k^{-1} \bmod n$.
Thus the difference $P^{-1}[x+1\bmod n] - P^{-1}[x]$ is $k^{-1}$.

Since $P[i] = j \Longleftrightarrow P[n] + ik \mod n = j$ holds for all indices~$i \in [1..n]$, we have
(using $i \gets P^{-1}[1]$ and $j \gets 1$ in the above equivalence)
\begin{align*}
	P[P^{-1}[1]] = 1 \mod n &\Longleftrightarrow P[n] + P^{-1}[1]\cdot k = 1 \mod n \\
							&\Longleftrightarrow P^{-1}[1] \cdot k = 1 - P[n] \mod n \\
							&\Longleftrightarrow P^{-1}[1] = (1 - P[n]) \cdot k^{-1} \mod n.
\end{align*}
\end{proof}

Consequently, using the split index $s$ of $p_s$ for \SA{} and 
\begin{align*}
	\ISA[i] 
	&= \ISA[1] + (i-1) k^{-1}  \mod n \\
	&= (1 - \SA[n]) \cdot k^{-1} + (i-1) k^{-1}  \mod n \\
	&= (i - \SA[n]) \cdot k^{-1} \mod n,
\end{align*}
we can rewrite $\wordS$ defined in \cref{eqInducedBinaryWord} as

\begin{equation}\label{eqInducedBinaryInverse}
\wordS[i] =
\begin{cases}
	\texttt{a}	& \text{~if~} \ISA[i] \le ~s \text{, or} \\
	\texttt{b}	& \text{~otherwise} 
\end{cases}
\end{equation}
where \SA{} and \ISA{} denote the suffix array and the inverse suffix array of~$\wordS$, respectively. 
Another result is that 
$\ISA[p_s] = s$ is the number of \texttt{a}'s in \wordS{}, for which we split the study into the cases of \cref{thmBinaryStringCharacteristic}:
	\begin{enumerate}
		\item If $\SA[1] = n$ and $p_s = n-k-1$, then $\SA[n] = n-k$ and $\ISA[i] = (i - n + k) k^{-1} \mod n$.
Consequently, $s = \ISA[p_s] = (-1) k^{-1} \mod n = n - k^{-1} \mod n$.
		\item If $\SA[1] = k+1$ and $p_s = n-k$, then $\SA[n] = 1$ and $\ISA[i] = (i - 1)  k^{-1} \mod n$.
Consequently, $s = \ISA[p_s] = (n-k-1) k^{-1} \mod n = n k^{-1} -1 - k^{-1} \mod n = n - 1 - k^{-1} \mod n$.
		\item If $\SA[1] = 1$ and $p_s = n-k$, then $\SA[n] = n-k+1$ and $\ISA[i] = (i - n + k-1) k^{-1} \mod n$.
Consequently, $s = \ISA[p_s] = (-1) k^{-1} \mod n = n - k^{-1} \mod n$ as in Case~(1).
	\end{enumerate}
For \cref{figBinaryExample} with $k = 5$ and $n = 8$, 
we know that the number of \texttt{a}'s is $\ISA[p_s] = 3$ in Cases~(1) and~(3), and $\ISA[p_s] = 2$ in Case~(2)
because $k^{-1} = 5 \Leftrightarrow k \cdot k^{-1} \mod n = 1 \mod n$.
This also determines the constant~$m$ used in Lemma~\ref{lemRotate}.
Finally, we can fix the parameter~$t$ in Lemma~\ref{lemTernaryBWT} defined such that $p_t = p_1 - 1 - k \mod n$:
For that, write $\ISA[i] = (i - p_n) k^{-1} \mod n = (i + k - p_1) k^{-1} \mod n$ and compute
$\ISA[p_t] = \ISA[p_1-1-k] = (-1) k^{-1} \mod n = n - k^{-1} \mod n$.

\section{Applications}\label{secApplications}
We conclude our main results of \cref{secAPSA} by drawing connections between strings having arithmetically progressed suffix arrays
and Christoffel words (\cref{secChristoffel}), balanced words (\cref{secBalanced}), and Fibonacci words (\cref{secFibonacci}).

\subsection{Christoffel Words}\label{secChristoffel}
Christoffel words are binary strings whose origins are considered to date from Bernoulli's 1771 work \cite{B71}. 
Christoffel words can be described geometrically in terms of a line segment and associated polygon traversal
\cite{BL93}:
let $(p, q) \in \mathbb{N}^2$ where $(p, q)$ are coprime 
and let $\mathcal{S}$ be the line segment with endpoints $(0, 0)$ and $(p, q)$. 
The \Define{induced path} of a binary string~\word{} is a list of points $v_0, \ldots, v_n \in \mathbb{N}^2$ such that
$v_0 = (0,0)$, $v_n = (p,q)$, and for each $i \in [1..n]$, $v_{i} - v_{i-1} = (1,0)$ if $\word[i]= \texttt{a}$ and $v_{i} - v_{i-1} = (0,1)$ if $\word[i] = \texttt{b}$.
The string $\word{} \in \{\texttt{a},\texttt{b} \}^*$ is a \Define{lower Christoffel} word if the path induced by \word{} from the origin 
to $(p, q)$ is below the line segment $\mathcal{S}$ and the path and $\mathcal{S}$ determines a simple polygon which contains no other point in $\mathbb{N}^2$. 
An \Define{upper Christoffel} word is defined analogously by taking the path above $\mathcal{S}$. 
Hence, a Christoffel word is defined by a direction (above or below) and the \Define{slope} $p/q$, which determines $p$ and $q$ since $p$ and $q$ are coprime.
For instance, Case (3) of \cref{figBinaryExample} determines a Christoffel word with slope $\frac{5}{3}$. It follows from the coprimality of $p$ and $q$ that Christoffel words are necessarily primitive.
In what follows, we focus on lower Christoffel words, and drop the \emph{lower} adjective when speaking of (lower) Christoffel words.

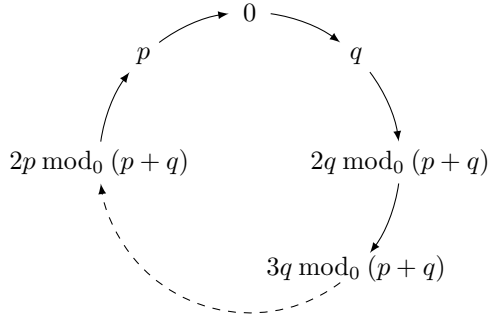
\begin{figure}[t]
\centering
    \begin{tikzpicture}[xscale=-1]
        \def \n {8}
        \def \radius {2cm}
        \def \margin {8} 

        \foreach \s in {1,2,3,7,\n}
        {
          \draw[->, >=latex] ({360/\n * (\s - 1)+\margin+90}:\radius) 
            arc ({360/\n * (\s - 1)+\margin+90}:{360/\n * (\s)-\margin+90}:\radius);
        }
        
        \node at ({360/\n * (0)+90}:\radius) {$0$};
        \node at ({360/\n * (1)+90}:\radius) {$q$};
        \node at ({360/\n * (2)+90}:\radius) {$2q \bmodz (p+q)$};
        \node at ({360/\n * (3)+90}:\radius) {$3q \bmodz (p+q)$};
        \node at ({360/\n * (7)+90}:\radius) {$p$};
        \node at ({360/\n * (6)+90}:\radius) {$2p \bmodz (p+q)$};
        
        \draw[->, >=latex,dashed] ({360/\n * (3)+\margin+90}:\radius) 
        arc ({360/\n * (3)+\margin+90}:{360/\n * (6)-\margin+90}:\radius);
    \end{tikzpicture}
	\caption{Cayley graph of $\mathbb{Z}/(p+q)\mathbb{Z}$ generated by $q$. See \cite[Fig.~1.4]{BLRS08} for a concrete example.}
	\label{figCayleyGraph}
\end{figure}

To show that every lower Christoffel word has an arithmetically progressed suffix array, we use an alternative characterization of Christoffel words based on Cayley graphs~\cite[Def.~1.4]{BLRS08}.
Let again $p, q \in \mathbb{N}$ be coprime.
\cref{figCayleyGraph} is the Cayley graph of $\mathbb{Z}/(p+q)\mathbb{Z}$ generated by $q$.
Cayley graphs are always simple cycles since $q$ and $p+q$ are coprime.
In what follows, we use $\bmodz$ with $n \bmodz n = 0 \bmodz n = 0$ to match the definition in~\cite{BLRS08},
as opposed to $\bmod$ with $0 \bmod n = n \bmod n = n$ elsewhere.

An edge $s \to t$ in the Cayley graph has the label~$\mathtt{a}$ if $s < t$, otherwise it has the label $\mathtt{b}$.
Reading the edge labels, starting from node $0$, following the edges of the Cayley graph and stopping when reaching node $0$ again, yields the lower Christoffel word $\wordC$ parametrized by $p$ and $q$.
The $i$-th node in the Cayley graph ($0$ being the first node) is $(i-1)q \bmodz (p+q)$.
Hence the $i$-th character of $\wordC$ is
\begin{align}
    \wordC[i] = \left\{ \begin{array}{ll}
	\texttt{a}\quad & \text{if }\; (i-1)q \bmodz (p+q) \;<\; iq \bmodz (p+q)\text{,} \\
	\texttt{b} & \text{if }\; (i-1)q \bmodz (p+q) \;>\; iq \bmodz (p+q)\text{.}
    \end{array}\right.
    \label{eq:christoffel_word}
\end{align}

Given the suffix array $\SA_{\wordC}$ of a lower Christoffel word $\wordC$, the split index $s$ (defined in \cref{thmBinaryStringCharacteristic}) is given by $p$, the total number of units along the $x$-axis in the polygonal path. 
All lower Christoffel words are Lyndon words \cite{BL93} and so necessarily border-free. Hence $\SA_{\wordC}[p_1] = 1$, and
$\SA_{\wordC}[s+1] = n$ since the string~$\wordC$ must end \texttt{b}.
The following theorem gives now the connection between lower Christoffel words and the strings of \cref{secAPSA}:

\begin{theorem}\label{thmChristoffelProgression}
    Let $p, q \in \mathbb{N}$ be coprime.
    Then the lower Christoffel word $\wordC$ characterized by $p$ and $q$ has an arithmetically progressed suffix array.
    The suffix array is given by the arithmetic progression $P$ with $p_1 = 1$ and $k = q^{-1}$, where $q^{-1}$ is the multiplicative inverse of $q$ in $\mathbb{Z}/n\mathbb{Z}$.
    The string~$\wordC$ is identical to Case~3 of \cref{thmBinaryStringCharacteristic} characterizing the binary case.
\end{theorem}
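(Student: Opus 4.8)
The goal is to identify the lower Christoffel word $\wordC$ parametrized by coprime $p,q$ (with $n := p+q$) with Case~3 of \cref{thmBinaryStringCharacteristic}, i.e.\ the binary string whose suffix array is the arithmetically progressed permutation $P$ with $p_1 = 1$ and ratio $k = q^{-1} \bmod n$. By \cref{thmBinaryStringCharacteristic}, the Case~3 string $\wordS$ is the \emph{unique} binary string whose suffix array is $P$ with $p_1 = 1$; equivalently, by \cref{eqInducedBinaryInverse}, $\wordS[i] = \texttt{a}$ iff $\ISA[i] \le s$, where for Case~3 we have $\SA[n] = n-k+1$ so $\ISA[i] = (i - n + k - 1)k^{-1} \bmod n$, and $s$ is the number of $\texttt{a}$'s. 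So it suffices to show two things: (i) the positions in $\wordC$ carrying the letter $\texttt{a}$ are exactly the first $s=p$ entries of this $P$, and (ii) $P$ really is arithmetically progressed with the claimed ratio, i.e.\ $k=q^{-1}$ is coprime to $n$ (immediate, since $q$ is coprime to $n=p+q$, hence so is $q^{-1}$).

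The key computation is to read off $\SA_{\wordC}$ directly from the Cayley-graph description \cref{eq:christoffel_word}. First I would recall (from the text preceding the theorem) that $\wordC$ is a Lyndon word and hence border-free, so its lexicographically smallest suffix is the whole word: $\SA_{\wordC}[1] = 1 = p_1$. Next, since $\wordC$ ends in $\texttt{b}$ and has exactly $p$ occurrences of $\texttt{a}$, the split index is $s = p$ and $\SA_{\wordC}[s+1] = n$. The heart of the argument is to show that the suffix ordering of $\wordC$ is governed by the cyclic structure of the Cayley graph: concretely, I would prove that the map $i \mapsto (i-1)q \bmodz n$ sends the index $i$ of a suffix to a value that is monotone with respect to lexicographic order of that suffix, within each of the two blocks (suffixes beginning $\texttt{a}$, suffixes beginning $\texttt{b}$). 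The clean way to see this is to observe that $\wordC$ is a conjugate-class representative tied to the rotation-by-$q$ dynamics on $\mathbb{Z}/n\mathbb{Z}$: the suffix $\wordC[i..n]$ extended cyclically is the cyclic rotation of $\wordC$ starting at position $i$, and rotations of a primitive Lyndon word are all distinct; their lexicographic order matches the order of the corresponding residues under the Cayley/Sturmian coding. From this one gets $\SA_{\wordC}[j]$ as an affine-in-$j$ function of $j$ modulo $n$, with step equal to the inverse of $q$ — i.e.\ $\SA_{\wordC}$ is arithmetically progressed with ratio $q^{-1} \bmod n$ and first entry $1$.

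Once $\SA_{\wordC}$ is known to be this specific $P$, the theorem follows from uniqueness in \cref{thmBinaryStringCharacteristic}: both $\wordC$ and the Case~3 string $\wordS$ are binary strings of length $n$ with the same suffix array $P$ (same $p_1 = 1$), hence they are equal, and in particular $\wordC$ has an arithmetically progressed suffix array with the stated parameters. Alternatively — and this is the version I would write out if the rotation argument feels slippery — I would verify \cref{eqInducedBinaryInverse} head-on: show $\wordC[i] = \texttt{a} \iff (i-1)q \bmodz n < iq \bmodz n \iff \ISA[i] \le p$, by matching the residue $(i-1)q \bmodz n$ with (a shift of) $\ISA[i] = (i - n + k - 1)k^{-1} \bmod n$ using $k = q^{-1}$; the "$<$ versus $>$" dichotomy in \cref{eq:christoffel_word} corresponds precisely to whether applying one more step of $+k^{-1}$ in $\ISA$-coordinates, i.e.\ $+1$ in residue, stays below the threshold $p$ or wraps past it.

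\textbf{Main obstacle.} The delicate point is bridging the two different coding conventions — $\bmodz$ (with $n \bmodz n = 0$) used for the Cayley graph versus $\bmod$ (with $n \bmod n = n$) used for arithmetically progressed permutations — and pinning down the exact affine relation between the Cayley-node label $(i-1)q \bmodz n$ and $\ISA_{\wordC}[i]$, including getting $\SA[n] = n-k+1$ to come out right so that the first entry is genuinely $1$ and not merely $1$ up to a shift. Establishing rigorously that lexicographic order of the suffixes of a Lyndon word coincides with the order of the corresponding Sturmian/Cayley residues (rather than just asserting it) is the part that needs the most care; invoking border-freeness to equate suffixes with cyclic rotations, and primitivity to guarantee all rotations are distinct, is what makes it go through.
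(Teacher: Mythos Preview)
Your proposal is correct, and the ``alternative'' route you sketch at the end --- verifying \cref{eqInducedBinaryInverse} head-on by matching the Cayley-graph condition $(i-1)q \bmodz n < iq \bmodz n$ against the $\ISA$-threshold condition $\ISA[i] \le s$ --- is exactly what the paper does. The paper never computes $\SA_{\wordC}$ directly; it works purely characterwise, rewriting the $\texttt{a}$-condition from \cref{eq:christoffel_word} as $1 + (i-1)q \bmod n < n+1-q$, then substituting $k = q^{-1}$, $k^{-1} = q$, $s = n-q$ into \cref{eqInducedBinaryInverse} to get the identical inequality for $\wordS$. Equality of the two strings then pulls back the suffix array of $\wordS$ (known from \cref{thmBinaryStringCharacteristic}) to $\wordC$.

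Your primary plan --- ordering the suffixes of $\wordC$ via the rotation/Sturmian dynamics and reading off $\SA_{\wordC}$ as an affine function of rank --- would also work, but it is strictly more effort: you would need to justify that suffix order agrees with rotation order (border-freeness), that rotation order is governed by the residue $(i-1)q \bmodz n$, and then still handle the $\bmodz$/$\bmod$ bookkeeping. The paper's shortcut is that once you compare characters rather than suffixes, you can lean on the uniqueness in \cref{thmBinaryStringCharacteristic} and never touch lexicographic order at all. Your ``main obstacle'' paragraph correctly identifies the only real work in the paper's argument, and that work amounts to two lines of inequality massage.
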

\begin{proof}
    We proof the theorem by showing that the Christoffel word $\wordC$ is equal to the string described in \cref{thmBinaryStringCharacteristic} as Case~3 
    whose suffix array is the arithmetically progressed array~$P$.

    Let $n = p+q$.
    By \cref{eq:christoffel_word} the $i$-th character of $\wordC$ is an $\mathtt{a}$ if and only if $(i-1)q \bmodz n \;<\; iq \bmodz n$.
    We can rewrite that to $(i-1)q \bmodz n \;<\; (i-1)q \bmodz n + q \bmodz n$.
    This condition is fulfilled if and only if $((i-1)q \bmodz n) + q < n$.
    We can rewrite that to $(i-1)q \bmodz n < n - q$.
    Replacing $\bmodz$ with $\bmod$, we obtain
    \begin{align*}
        \wordC[i] = \left\{ \begin{array}{ll}
	    \texttt{a}\quad & \text{if }\; 1 + (i-1)q \bmod n \;<\; n + 1 - q\text{,} \\
	    \texttt{b} & \text{otherwise}\text{.}
        \end{array}\right.
    \end{align*}
    
    Let $k = q^{-1}$.
    Let $\wordS$ denote the string of Case~3 in \cref{thmBinaryStringCharacteristic}, which is characterized by $p_1 = 1$ and $p_s = n - k = n - q^{-1}$.
    Using the results from \cref{secInversePermutation}, \cref{eqInducedBinaryInverse}, $\wordS$ can be written as
    \begin{equation}\label{eqCayleyA}
        \wordS[i] = \left\{ \begin{array}{ll}
	    \texttt{a}\quad & \text{if }\; (i + k - 1)k^{-1} \bmod n \;\le\; s\text{,} \\
	    \texttt{b} & \text{otherwise}\text{.}
        \end{array} \right.
    \end{equation}
    Substituting $k = q^{-1}$, $k^{-1} = q$ and $s = \ISA[p_s] = \ISA[n - q^{-1}] = n - q$, we can rewrite the condition for $\mathtt{a}$ in \cref{eqCayleyA} as $1 + (i-1)q \bmod n \le n - q$.
    Replacing $\le$ with $<$ we obtain the same definition for $\wordS$ as for $\wordC$, concluding the proof.
    \begin{align*}
        \wordS[i] = \left\{ \begin{array}{ll}
	    \texttt{a}\quad & \text{if }\; 1 + (i-1)q \bmod n \;<\; n + 1 - q\text{,} \\
	    \texttt{b} & \text{otherwise}\text{.}
        \end{array}\right.
    \end{align*}
\end{proof}

\begin{figure}
  \setlength{\tabcolsep}{0.1em}
\begin{tabular}{l
@{\hskip 0.55em}
*{12}{l}
@{\hskip 0.55em}
r*{12}{c}l
@{\hskip 0.55em}
c@{\hskip 0.55em}c}
		\toprule
\multicolumn{12}{c}{\word{}} & \multicolumn{14}{c}{\SA{}} &~ $p_s$ & $s$ \\
		\cmidrule(r){2-13}
		\cmidrule{14-26}
		&
		{\scriptsize 1} &
		{\scriptsize 2} &
		{\scriptsize 3} &
		{\scriptsize 4} &
		{\scriptsize 5} &
		{\scriptsize 6} &
		{\scriptsize 7} &
		{\scriptsize 8} &
           {\scriptsize 9} &
           {\scriptsize 10} &
           {\scriptsize 11} &
           {\scriptsize 12} &
		  &
		{\scriptsize 1} &
		{\scriptsize 2} &
		{\scriptsize 3} &
		{\scriptsize 4} &
		{\scriptsize 5} &
		{\scriptsize 6} &
		{\scriptsize 7} &
		{\scriptsize 8} &
           {\scriptsize 9} &
           {\scriptsize 10} &
           {\scriptsize 11} &
           {\scriptsize 12} &
		  &
		\\
		(1) & 
		\texttt{a} & 
		\texttt{a} & 
		\texttt{b} & 
	     \texttt{a} & 
		\texttt{b} & 
		\texttt{a} & 
		\texttt{a} & 
		\texttt{b} & 
		\texttt{a} & 
          \texttt{b} & 
	     \texttt{a} & 
		\texttt{b} &
	[ &
		1, &
		6, &
		11, &
		~4, &
		9, &
		2, &
		7, &
		12, &
           ~5, &
          10, &     
          ~ 3, & 
          ~ 8
		] &
		~~~7 & 7 \\
(2) &
	\texttt{b} & 
	\texttt{a} & 
     \texttt{b} & 
	\texttt{a} & 
	\texttt{b} & 
     \texttt{a} &
	\texttt{a} & 
	\texttt{b} & 
	\texttt{a} & 
	\texttt{b} & 
	\texttt{a} & 
	\texttt{a} & 
		[ &
		1, &
		8, &
		3, &
		10, &
		5, &
		12, &
		7, &
		2, &
          ~9, &
		~4, &
		~ 11, &
           ~~6
		] &
		~~~5 & 5 \\
		\bottomrule
	\end{tabular}
	\caption{Two Christoffel words with $S = ((0,0), (7,5))$, slope $\frac{5}{7}$, and length 12. The suffix array of the the lower Christoffel word (1) has an arithmetically progressed permutation with ratio~$k=5$, $p_s = 7 = n-k$, and hence is an instance of \cref{thmBinaryStringCharacteristic}, Case 3. 
Then, assuming $\texttt{b} < \texttt{a}$, the suffix array of the upper Christoffel word (2) has an arithmetic progression ratio of $n-k = 7$ with split index $n-s$ and position of largest suffix starting
$\texttt{b}$ of $n-p_s$.
	}
	\label{figChristoffelExample}
\end{figure}

In the geometric setting, traversing the path of the polygon from $(0,0)$ to $(p, q)$ is equivalent to scanning the characters in the defining Christoffel word $\wordC$, hence we can associate each character with its polygon coordinates. 
The BWT can be obtained from iterating across the suffix array and for each index selecting the preceding character in the text -- the BWT for the first string in Figure \ref{figChristoffelExample} has maximal runs in the form $\texttt{b}^5 \texttt{a}^7$; more generally, the BWT of a Christoffel word over $\Sigma = \{\texttt{a},\texttt{b}\}$ with slope $\frac{q}{p}$ takes the form $\texttt{b}^q \texttt{a}^p$ \cite{BLRS08}. Further, since the BWT is injective on Lyndon
words (see \cite{BLRS08}) it follows that this property holds for Christoffel words.\\

The progression ratio $q^{-1}$ of \cref{thmChristoffelProgression} may be useful for accessing geometries of interest in the polygon or aiding discovery of geometric repetitive structures.
For instance, to access the `steps' in the polygon in decreasing width, start at the origin and apply increments of $k$: for the first string in Figure \ref{figChristoffelExample} the widest steps are at coordinates $(0,0)$ and $(3,2)$ - see Figure \ref{fig_Christ}.
Note that the geometric width of this polygon at a certain height reflects a run of the character~\texttt{a} in its representing Christoffel word~$\wordC$. Hence, our question of steps in decreasing width can be translated to finding text positions $\wordC[i]$ with $\wordC[i..]$ having a long common prefix with $\texttt{a}\cdots$.

We 
now extend a known result for lower Christoffel words \cite[Sect.~6.1]{BLRS08}, 
which distinguishes consecutive rows in an associated BWT matrix:

\begin{lemma}
Let $\wordC$ be a 
Christoffel word of length $n \ge 2$ over $\Sigma = \{\mathtt{a},\mathtt{b} \}$. Suppose $\wordC$ has an arithmetically progressed suffix array with progression ratio $k$. Then in the BWT matrix~$\mathbf{M}$ of $\wordC$
with $\mathbf{M}[i,j] = \wordC[(\SA_{\wordC}[i]+j-1) \mod n]$,
rows $i$ and $i+1$ for $1 \le i < n$ differ in exactly two consecutive positions. For each row $i$, these positions are at $\mathbf{M}[i, i(n-k) \bmod n]$ and $\mathbf{M}[i, i(n-k)+1 \bmod n]$. 
\end{lemma}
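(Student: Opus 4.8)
The plan is to unfold the definitions of the BWT matrix and the arithmetically progressed suffix array, reduce the claim to a statement about two consecutive cyclic rotations of $\wordC$, and then pin down exactly which positions they differ in. Fix $i$ with $1 \le i < n$ and set $n := |\wordC|$. By definition, row $i$ of $\mathbf{M}$ is the cyclic rotation $R_i := \wordC[\SA_{\wordC}[i]..n]\wordC[1..\SA_{\wordC}[i]-1]$, i.e.\ $R_i[j] = \wordC[(p_i + j - 1) \bmod n]$ where $P = [p_1,\ldots,p_n] = \SA_{\wordC}$ has ratio $k$. By \cref{thmChristoffelProgression} the Christoffel word is Case~3 of \cref{thmBinaryStringCharacteristic}, so from the proof of that theorem we know $\wordC[j] = \wordC[j+k \bmod n]$ for every $j \in [1..n]\setminus\{p_1, p_s\}$; equivalently, $\wordC[j] = \wordC[j-(n-k) \bmod n]$ for all $j$ except the two "defect" positions $p_1 = 1$ and $p_s = n-k$.

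The key computation is then to compare $R_i[j]$ with $R_{i+1}[j]$. Since $p_{i+1} = p_i + k \bmod n$, we have $R_{i+1}[j] = \wordC[(p_i + k + j - 1) \bmod n] = \wordC[(p_i + j - 1) \bmod n + k \bmod n]$, so $R_i[j]$ and $R_{i+1}[j]$ agree precisely when $(p_i + j - 1) \bmod n$ is not one of the two defect positions $\{p_1, p_s\} = \{1, n-k\}$. Solving $(p_i + j - 1) \bmod n = 1$ and $(p_i + j - 1)\bmod n = n-k$ for $j$ gives the two differing columns; I would then check that, written in the form requested, these are $j = i(n-k) \bmod n$ and $j = i(n-k)+1 \bmod n$. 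For this I use $p_i = p_1 + (i-1)k \bmod n$ with $p_1 = 1$, so $p_i = 1 + (i-1)k \bmod n$; then $(p_i + j - 1)\bmod n = n-k$ becomes $j = n-k-(i-1)k \bmod n = n-ik \bmod n = i(n-k) \bmod n$ (using $n \bmod n \equiv 0$ in the additive sense here, i.e.\ $-ik \equiv i(n-k)$), and $(p_i+j-1)\bmod n = 1 = n-k+1 - (n-k)$ — wait, one must check that position $1$ corresponds to column $i(n-k)+1$, which follows because $1$ and $n-k$ are consecutive entries of $P$ (they differ by $-k$, hence are adjacent in the progression read backwards), so their preimages under $j \mapsto (p_i+j-1)\bmod n$ are consecutive values of $j$, namely $i(n-k)\bmod n$ and $i(n-k)+1 \bmod n$. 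This adjacency is exactly why the two differing positions are consecutive, which is the geometrically meaningful part of the statement.

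The two remaining things to verify are that these two columns are genuinely distinct (so there really are two differing positions, not fewer) and that no third position differs. Distinctness is immediate because $1 \ne n-k$ (as $k \ne n-1$; note $n \ge 2$ and $k$ coprime to $n$, and if $k = n-1$ the word is unary, excluded), and the only way two distinct $j$ in $[1..n]$ could be mapped to the same residue is impossible since $j \mapsto (p_i + j-1)\bmod n$ is a bijection. That no third position differs is exactly the content of "$\wordC[j] = \wordC[j+k\bmod n]$ for all but two $j$," which is Case~3 of \cref{thmBinaryStringCharacteristic}. Finally I would sanity-check against \cref{figChristoffelExample}: for the first string there $n=12$, $k=5$, so $n-k = 7$, and rows $i$ and $i+1$ should differ in columns $7i \bmod 12$ and $7i+1 \bmod 12$; a quick glance at one pair of rows confirms the formula.

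The main obstacle is purely bookkeeping: getting the $\bmod$ conventions consistent (the paper uses $\bmod n$ valued in $[1..n]$ for array indices but the natural additive reasoning here wants residues in $[0..n-1]$), and correctly identifying which of the two defect positions gives column $i(n-k)$ and which gives $i(n-k)+1$. I expect no conceptual difficulty beyond invoking \cref{thmChristoffelProgression} and the "two defects" structure from the proof of \cref{thmBinaryStringCharacteristic}; the whole argument is a careful substitution. One subtlety worth stating explicitly is that the claim is about the whole BWT \emph{matrix} and not just the last column (the BWT itself), so I cannot simply reuse \cref{corSimpleBWT}; I genuinely need the rotation-by-rotation comparison above.
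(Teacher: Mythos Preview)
Your approach is sound but contains a concrete error in identifying the defect positions. You write that $\wordC[j] = \wordC[j+k \bmod n]$ fails exactly for $j \in \{p_1, p_s\} = \{1, n-k\}$, but the proof of \cref{thmBinaryStringCharacteristic} actually gives the exceptions as $j \in \{p_n, p_s\}$: at $j = p_n$ we have $\wordC[p_n] = \texttt{b}$ while $\wordC[p_n + k \bmod n] = \wordC[p_1] = \texttt{a}$, and at $j = p_s$ we have $\wordC[p_s] = \texttt{a}$ while $\wordC[p_s+k] = \wordC[p_{s+1}] = \texttt{b}$. In Case~3 with $p_1 = 1$ one has $p_n = 1 + (n-1)k \bmod n = n-k+1$, so the defect set is $\{n-k,\, n-k+1\}$, not $\{1,\, n-k\}$. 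This is exactly why your computation for the second column stalled: solving $(p_i + j - 1) \bmod n = 1$ gives $j \equiv 1 - (i-1)k \equiv i(n-k) + k + 1 \pmod n$, which is \emph{not} $i(n-k)+1$. With the correct defect position $n-k+1$ in place of $1$, the equation $(p_i + j - 1) \bmod n = n-k+1$ yields $j \equiv n - ik + 1 \equiv i(n-k)+1 \pmod n$ immediately, and the consecutiveness of the two differing columns is then transparent because the defects $n-k$ and $n-k+1$ are themselves consecutive integers.

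Once this is fixed, your argument is correct and is essentially the same idea as the paper's proof, just packaged more algebraically. The paper first exhibits the $\texttt{ab}/\texttt{ba}$ swap between rows $1$ and $2$ at columns $n-k$ and $n-k+1$ by direct inspection (using $\wordC[n-k] = \texttt{a}$, $\wordC[n-k+1] = \texttt{b}$, $\wordC[n] = \texttt{b}$, $\wordC[1] = \texttt{a}$), then argues the pattern shifts by $n-k$ per row, and finally rules out further differences via the same shift-by-$(n-k)$ observation you use. Your direct column computation from $p_i = 1 + (i-1)k$ is arguably cleaner than the paper's row-by-row shift argument, but the underlying mechanism is identical.
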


\begin{proof}
The Christoffel word $\wordC$ is characterized by \cref{thmBinaryStringCharacteristic} (3). Further, since $\wordC$ is also a Lyndon word then it starts with \texttt{a} and ends with \texttt{b}. The second row in the BWT matrix $\mathbf{M}$ of $\wordC$ is  
$\wordC [\SA_{\wordC}[2]..n] \wordC[1 .. \SA_{\wordC} [2]-1]$
where the prefix $\wordC [\SA_{\wordC}[2] .. n]$ has length $n-k$ and ends with \texttt{b}.

From Case~3 of \cref{thmBinaryStringCharacteristic}, $p_s = n-k$, and so $\wordC [n-k] = \texttt{a}$, and since $\wordC [n-k .. n]$ is the largest suffix starting with \texttt{a} and $n \ge 2$, then it has the prefix \texttt{a}\texttt{b}. 
So $\mathbf{M}[1, n-k] = \texttt{a}$ and $\mathbf{M}[1, n-k+1] = \texttt{b}$, 
while from $\wordC[1] = \texttt{a}$ and $\wordC[n] = \texttt{b}$, 
$\mathbf{M}[2, n-k] = \texttt{b}$ and $\mathbf{M}[2, n-k+1] = \texttt{a}$.
This gives an arrangement in the first two rows of $\mathcal{A}=$ 
$\frac{\texttt{a} \texttt{b}}{\texttt{b} \texttt{a}}$ starting at position $n-k$ in row 1 and then at position 
$2(n-2) \bmod n$ in row 2. Similarly for each subsequent row $i$ in $\mathbf{M}$, $i<n$, there is an additional factor of $(n-k) \bmod n$ for the position of occurrence of $\mathcal{A}$.

We proceed to show that there are no other differences between adjacent rows other than those occurring in $\mathcal{A}$. So suppose instead that $\mathbf{M}[i, j] = \texttt{c}$ and $\mathbf{M}[i+1, j] = \texttt{d}$ for two distinct characters $\texttt{c},\texttt{d} \in \Sigma$, $1 \le i <n$ and $i(n-k)+1 < j < i(n-k) \bmod n$. Then the character~\texttt{d} in row $i+1$ is the same character in $\wordC$~as the one at position $j-(n-k) \bmod n$ in row $i$ which must be the same character as the one at position $j-(n-k) + (n-k) \bmod n$ in row $i$, namely a $\texttt{c}$, contradicting the claim.

Hence the only differences between adjacent rows occur at $\mathcal{A}$ which coincide with $p_s$ and the end of $\wordC$. In particular, $\mathcal{A}$ determines the lexicographic ordering of the rows of the matrix $\mathbf{M}$.
\end{proof}

The arithmetic progression of 
arithmetically progressed suffix arrays of Christoffel words 
allows us to determine the following factorizations of such words in constant time (without looking at the explicit characters of the word):

\begin{itemize}
\item \Define{Right factorization} \cite{BLRS08}; originally called the \Define{standard factorization} \cite{CFL58,DBLP:journals/jal/Duval83}. If $\s{w} = \s{uv}$ is a Lyndon word with \s{v} its lexicographically least proper suffix, then \s{u} and \s{v} are also Lyndon words and $\s{u} < \s{v}$.
Equivalently, the right factor \s{v} of the standard factorization can be defined to be the longest proper suffix of \s{w} that is a Lyndon word~\cite{bassino05standard}.

\item \Define{Left factorization} \cite{S62,V78}. If $\s{w} = \s{uv}$ is a Lyndon word with \s{u} a proper Lyndon prefix of maximal length, then \s{v} is also a Lyndon word and $\s{u} < \s{v}$.

\item 
\Define{$\BalancedTwo$ factorization} \cite{Mel1999}. A Lyndon word \s{w} is $\BalancedTwo$ if \s{w} is a character or there is a factorization $\s{w} = (\s{u}, \s{v})$ that is simultaneously the left and the right factorization of \s{w}, and \s{u} and \s{v} are also $\BalancedTwo$.
\end{itemize}

For any of the three above factorizations $\s{w} = (\s{u}, \s{v})$, we will say that the \Define{factorization index} is the index of the last character of \s{u} in \s{w}. 
The factorization index determines the split position and hence the factorization.

\begin{lemma}
\label{lem-balanced}
Let \wordC{} be a
Christoffel word of length $n \ge 3$ over $\Sigma = \{\mathtt{a},\mathtt{b} \}$.
Suppose $\wordC$ has an arithmetically progressed suffix array $\SA_{\wordC}$ with progression ratio $k \ge 1$ and split index $s$. 
Then $\wordC$ is a $\BalancedTwo$ Lyndon word with a factorization $\wordC = \word_{\textup{u}} \word_{\textup{v}}$
such that $|\word_{\textup{u}}| = \SA_{\wordC}[(s+2) \bmod n]$.
For $n=1,2$ we have that~$\wordC{} \in \{\mathtt{a},\mathtt{b}, \mathtt{ab}\}$, $k=1$, and that the split index~$s$ is 1.
\end{lemma}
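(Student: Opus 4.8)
The plan is to combine three facts: (i) $\wordC$ is a Lyndon word (by \cref{thmChristoffelProgression} it coincides with Case~3 of \cref{thmBinaryStringCharacteristic}); (ii) every Christoffel word is known to be a $\BalancedTwo$ Lyndon word \cite{BL93,Mel1999,BLRS08}; and (iii) the $\BalancedTwo$ factorization of a Lyndon word equals its right (standard) factorization, so it suffices to identify the factorization index with the stated suffix-array value. The first step is therefore to invoke \cref{thmChristoffelProgression} to place $\wordC$ into Case~3, so that $\SA_{\wordC}[1] = 1$, $p_s = n-k$, and $\wordC$ starts with $\mathtt{a}$ and ends with $\mathtt{b}$; I would also record from \cref{secInversePermutation} that $\ISA[i] = (i - \SA[n])\cdot k^{-1} \bmod n$ with $\SA[n] = n-k+1$ here, and that $s = n - k^{-1} \bmod n$.

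Next I would recall the characterization of the right factorization $\wordC = \word_{\textup{u}}\word_{\textup{v}}$: $\word_{\textup{v}}$ is the lexicographically least proper suffix of $\wordC$, equivalently the longest proper suffix that is a Lyndon word \cite{bassino05standard}. Since $\SA_{\wordC}$ lists the suffixes in increasing lexicographic order and $\SA_{\wordC}[1] = 1$ corresponds to the whole word (the least suffix overall), the least \emph{proper} suffix is $\wordC[\SA_{\wordC}[2]..n]$. Hence the factorization index — the index of the last character of $\word_{\textup{u}}$ — is $\SA_{\wordC}[2] - 1$, so $|\word_{\textup{u}}| = \SA_{\wordC}[2] - 1$. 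It remains to show $\SA_{\wordC}[2] - 1 = \SA_{\wordC}[(s+2)\bmod n]$, i.e. $\ISA[\SA_{\wordC}[2]-1] = (s+2)\bmod n$. Writing $\SA_{\wordC}[2] = p_1 + k = 1 + k$ (the second entry of the progression) and applying the $\ISA$ formula: $\ISA[k] = (k - (n-k+1))\cdot k^{-1} \bmod n = (2k - n - 1)k^{-1}\bmod n = (2 - k^{-1})\bmod n$ (using $k\cdot k^{-1}\equiv 1$ and $n\equiv 0$), and since $s = n - k^{-1}\bmod n \equiv -k^{-1}$, this is $(s+2)\bmod n$, as required. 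That $\word_{\textup{u}}$ and $\word_{\textup{v}}$ are themselves $\BalancedTwo$ follows from the recursive definition and the classical fact that both factors of a Christoffel word are again Christoffel words \cite{BLRS08}.

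For the degenerate sizes, $n=1$ forces $\wordC \in \{\mathtt{a},\mathtt{b}\}$, $k=1$, $s=1$; for $n=2$ the only Christoffel word is $\mathtt{ab}$, with $\SA = [1,2]$, ratio $k=1$, split index $s=1$; these are handled by inspection. The main obstacle I anticipate is purely bookkeeping: keeping the two conventions $\bmod$ (into $[1..n]$) and $\bmodz$ (into $[0..n-1]$) straight, and correctly pinning down which suffix-array entry gives the least \emph{proper} suffix when $\SA_{\wordC}[1]=1$ — once that indexing is settled, the identification $|\word_{\textup{u}}| = \SA_{\wordC}[(s+2)\bmod n]$ is a one-line congruence computation, and the $\BalancedTwo$ property is imported from the Christoffel-word literature rather than reproved.
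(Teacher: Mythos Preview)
Your proposal is correct and follows essentially the same approach as the paper: import the $\BalancedTwo$ property from the Christoffel-word literature, identify $\word_{\textup{v}}$ with the least proper suffix $\wordC[\SA_{\wordC}[2]..n]$, and then verify $\SA_{\wordC}[2]-1=\SA_{\wordC}[(s+2)\bmod n]$. The only difference is in this last equality: you compute it algebraically via the explicit $\ISA$ formula of \cref{secInversePermutation}, while the paper argues it structurally by observing that $\SA_{\wordC}[(s+1)\bmod n]=n$ and that the text position $\SA_{\wordC}[2]-1$ is where the second-smallest suffix starting with $\mathtt{b}$ begins, hence has SA rank $s+2$.
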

\begin{proof}
  We apply the result that a word is a lower Christoffel word if and only if it is a $\BalancedTwo$ Lyndon word \cite{Mel1999} or \cite[Theorem~6.7]{BLRS08}. 
  So given \wordC{},
it remains to prove that 
$|\word_{\text{u}}|= \SA_{\wordC}[(s+2) \bmod~n]$.
Let $j \in [1..n]$ be such that $|\word_{\text{u}}| =  \SA_{\wordC}[j]$.
Then $\word_{\text{u}} = \wordC[1..\SA_{\wordC}[j]]$ and $\word_{\text{v}} = \wordC[\SA_{\wordC}[j]+1..n]$. 
Since $\word_{\text{u}}$ and $ \word_{\text{v}}$
are Lyndon words, we have that
\begin{itemize}
  \item $j > s$, otherwise $\wordC[\SA_{\wordC}[j]] = \texttt{a}$ and $\word_{\text{u}}$ would have a border.
  \item $\SA_{\wordC}[(s+1) \bmod~n] = n$ with $\wordC[n] = \texttt{b}$ since $\wordC$ is a Lyndon word larger than one, and the smallest suffix starting with \texttt{b} is $\wordC[n..n]$.
  \item The second smallest suffix starting with \texttt{b} is $\wordC[\SA_{\wordC}[2]-1..n]$.
  To see that observe that, because $\wordC$ is a Lyndon word, $\SA_{\wordC}[1] = 1$, and hence $\SA_{\wordC}[2] \ge 2$.
  Next, $\wordC[\SA_{\wordC}[2]-1] = \texttt{b}$, otherwise $\wordC[\SA_{\wordC}[2]-1..n] \prec \wordC[\SA_{\wordC}[2]..n]$ would give a smaller suffix.
  Hence, $\SA_{\wordC}[(s+2) \bmod~n] = \SA_{\wordC}[2]-1$.
\end{itemize}
  Finally, $\word_{\text{v}} = \wordC[\SA_{\wordC}[2]..n]$ since otherwise we yield a contradiction to the right factorization that 
  $\word_{\text{v}}$ is the lexicographically least proper suffix.
  So $j = \SA_{\wordC}[(s+2) \bmod~n]$.
\end{proof}

For example, the Christoffel word in \cref{figChristoffelExample} Case (1) has factorization index $\SA_{\wordC}[9] = 5$ with the $\BalancedTwo$ factorization $(\texttt{a} \texttt{a} \texttt{b} \texttt{a} \texttt{b})(\texttt{a}  \texttt{a} \texttt{b}  \texttt{a} \texttt{b}  \texttt{a}  \texttt{b})$; the first level of recursion gives the $\BalancedTwo$ factorizations  $(\texttt{a} \texttt{a} \texttt{b})(\texttt{a} \texttt{b})$ and $(\texttt{a}  \texttt{a} \texttt{b}  \texttt{a} \texttt{b})(\texttt{a}  \texttt{b})$.

\begin{figure}[H]
\centering
\begin{tikzpicture}[x=-1cm, every node/.style={inner sep=0,outer sep=1mm, anchor=base, yshift=-3, fill=white}]
\newlength{\letterspace}
\setlength{\letterspace}{1mm}

\node at (0, -0.3) {$(0,0)$};

\node at (-7, 5.3) {$(7,5)$};

\node at (-3.3, 2.6) {$\mathcal{S}$};

\draw[thick] (0,0) -- (-7,5);

\node at (-4, 1.25) {$\word_{\text{L}}$};

\draw (0,0) -- (-2,0);

\node at (-0.5, -0.2) {$\texttt{a}$};

\node at (-1.5, -0.2) {$\texttt{a}$};

\draw (-2,0) -- (-2,1);

\node at (-2.2, 0.5) {$\texttt{b}$};

\draw (-2,1) -- (-3,1);

\node at (-2.5, 0.8) {$\texttt{a}$};

\draw (-3,1) -- (-3,2);

\node at (-3.2, 1.5) {$\texttt{b}$};

\draw (-3,2) -- (-4,2);

\node at (-3.5, 1.8) {$\texttt{a}$};

\draw (-4,2) -- (-5,2);

\node at (-4.5, 1.8) {$\texttt{a}$};

\draw (-5,2) -- (-5,3);

\node at (-5.25, 2.5) {$\texttt{b}$};

\draw (-5,3) -- (-6,3);

\node at (-5.5, 2.8) {$\texttt{a}$};

\draw (-6,3) -- (-6,4);

\node at (-6.25, 3.5) {$\texttt{b}$};

\draw (-6,4) -- (-7,4);

\node at (-6.5, 3.8) {$\texttt{a}$};

\draw (-7,4) -- (-7,5);

\node at (-7.25, 4.5) {$\texttt{b}$};

\draw[dotted, thick] (0,0) -- (-3,2);

\draw[dotted, thick] (-3,2) -- (-7,5);

\node at (-3, 3.75) {$\word_{\text{U}}$};

\draw (0,0) -- (0,1);

\node at (0.2, 0.5) {$\texttt{b}$};

\draw (0,1) -- (-1,1);

\node at (-0.5, 1.2) {$\texttt{a}$};

\draw (-1,1) -- (-1,2);

\node at (-0.8, 1.5) {$\texttt{b}$};

\draw (-1,2) -- (-2,2);

\node at (-1.5, 2.2) {$\texttt{a}$};

\draw (-2,2) -- (-2,3);

\node at (-1.8, 2.5) {$\texttt{b}$};

\draw (-2,3) -- (-4,3);

\node at (-2.5, 3.2) {$\texttt{a}$};

\node at (-3.5, 3.2) {$\texttt{a}$};

\draw (-4,3) -- (-4,4);

\node at (-3.8, 3.5) {$\texttt{b}$};

\draw (-4,4) -- (-5,4);

\node at (-4.5, 4.2) {$\texttt{a}$};

\draw (-5,4) -- (-5,5);

\node at (-4.8, 4.5) {$\texttt{b}$};

\draw (-5,5) -- (-7,5);

\node at (-5.5, 5.2) {$\texttt{a}$};

\draw (-6,5) -- (-7,5);

\node at (-6.5, 5.2) {$\texttt{a}$};

\end{tikzpicture}

\caption{\label{fig_Christ} An upper Christoffel word $\word_{\text{U}} =  \texttt{b} \texttt{a} \texttt{b} \texttt{a} \texttt{b} \texttt{a} \texttt{a} \texttt{b} \texttt{a} \texttt{b} \texttt{a} \texttt{a}$ and its reversal, the lower Christoffel word $\word_{\text{L}} = \texttt{a} \texttt{a} \texttt{b} \texttt{a} \texttt{b} \texttt{a} \texttt{a} \texttt{b} \texttt{a} \texttt{b} \texttt{a} \texttt{b}$, shown geometrically with respect to the line segment $\mathcal{S} = ((0,0),(7,5))$. The dotted lines indicate the 
$\protect\BalancedTwo$ factorization of $\word_{\text{L}}$ into the factors $\texttt{a} \texttt{a} \texttt{b} \texttt{a} \texttt{b}$ and $\texttt{a} \texttt{a} \texttt{b} \texttt{a} \texttt{b} \texttt{a} \texttt{b}$.}
\end{figure}

Conditions for the factorization of a Lyndon word into exactly two non-overlapping Lyndon factors are given in \cite{DBLP:journals/fuin/DaykinDS09}, where \emph{overlapping factors} have non-empty suffixes and prefixes sharing the same characters.
We can geometrically consider the $\BalancedTwo$ factorization of Christoffel words as follows:

\begin{lemma}
\label{lemmEuclidean}
Let $p, q \in \mathbb{N}$ be coprime and $\wordC$ be the lower Christoffel word  characterized by $p$ and $q$ with factorization index $i$. Further, let the line segment $\mathcal{S} = ((0,0),(p,q))$ be associated with $\wordC$. Then for all points on the path determined by $\wordC$ (apart from the end points), $\wordC[i]$ has the shortest Euclidean distance to $\mathcal{S}$.
\end{lemma}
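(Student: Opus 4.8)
The plan is to translate the combinatorial statement into geometry of the induced path $v_0,v_1,\dots,v_n$ of $\wordC$, where $v_0=(0,0)$, $v_n=(p,q)$, and $v_j-v_{j-1}$ equals $(1,0)$ if $\wordC[j]=\texttt{a}$ and $(0,1)$ if $\wordC[j]=\texttt{b}$; we associate $\wordC[i]$ with the vertex $v_i$ reached after the step it labels. Write $\mathcal{L}$ for the line $qx-py=0$ carrying $\mathcal{S}$. Since each step is $(1,0)$ or $(0,1)$, the path is coordinatewise monotone, so every vertex $v_j=(x_j,y_j)$ satisfies $0\le x_j\le p$ and $0\le y_j\le q$; hence the orthogonal projection of $v_j$ onto $\mathcal{L}$ has parameter $(px_j+qy_j)/(p^2+q^2)\in[0,1]$ and therefore lies on the segment $\mathcal{S}$ itself. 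Consequently the Euclidean distance of $v_j$ to $\mathcal{S}$ equals its distance to $\mathcal{L}$, namely $(qx_j-py_j)/\sqrt{p^2+q^2}$; the numerator is nonnegative because, $\wordC$ being a \emph{lower} Christoffel word, the path lies weakly below $\mathcal{L}$.

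First I would establish the lower bound. Since $\gcd(p,q)=1$, the only lattice points of $\mathcal{L}$ inside the box $[0,p]\times[0,q]$ are $(0,0)$ and $(p,q)$, i.e.\ the two path endpoints. Thus for every interior vertex $v_j$ with $1\le j\le n-1$ the integer $qx_j-py_j$ is strictly positive, hence at least $1$, so the distance from any interior vertex to $\mathcal{S}$ is at least $1/\sqrt{p^2+q^2}$, with equality for $v_j$ precisely when $qx_j-py_j=1$.

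The core step is to show that the factorization vertex $v_i$ attains this bound. Write $\wordC=\s{u}\s{v}$ for the $\BalancedTwo$ factorization; by \cite{Mel1999} (see also \cite[Thm.~6.7]{BLRS08}) both $\s{u}$ and $\s{v}$ are themselves lower Christoffel words, say $\s{u}$ with $p_1$ letters $\texttt{a}$ and $q_1$ letters $\texttt{b}$ and $\s{v}$ with $p_2=p-p_1$ letters $\texttt{a}$ and $q_2=q-q_1$ letters $\texttt{b}$. Reading $\s{u}$ brings the path to $v_i=(p_1,q_1)$ with $i=|\s{u}|=p_1+q_1$ the factorization index, and $1\le i\le n-1$ since $\s{u},\s{v}\ne\s{\varepsilon}$. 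By the classical Stern--Brocot/Farey description of the standard factorization of a Christoffel word \cite{BL93,BLRS08}, the slopes satisfy $q_1/p_1<q/p<q_2/p_2$ and form a Farey pair, so $p_1q_2-p_2q_1=1$. Using $p=p_1+p_2$ and $q=q_1+q_2$ this yields $qx_i-py_i=qp_1-pq_1=q_2p_1-p_2q_1=p_1q_2-p_2q_1=1$, so $v_i$ lies at distance $1/\sqrt{p^2+q^2}$ from $\mathcal{S}$, attaining the minimum of the previous paragraph. For uniqueness, if an interior vertex $v_k$ also had $qx_k-py_k=1$, then $q(x_i-x_k)=p(y_i-y_k)$, and $\gcd(p,q)=1$ forces $(x_i-x_k,y_i-y_k)=\mu\cdot(p,q)$ for an integer $\mu$; since all coordinates lie in $[0,p]\times[0,q]$ and $v_i$ is interior, $\mu=0$, i.e.\ $v_k=v_i$. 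Hence $v_i$, the vertex associated with $\wordC[i]$, is the unique interior vertex closest to $\mathcal{S}$.

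The main obstacle will be the core step: it hinges on the exact arithmetic of the standard factorization of Christoffel words — specifically the determinant-$1$ relation between the $\texttt{a}$/$\texttt{b}$-counts of the two factors — which I would import from the Stern--Brocot characterization in \cite{BL93,BLRS08} rather than reprove here. Everything else (the projection lying on $\mathcal{S}$, the integrality lower bound, and the uniqueness argument) uses only monotonicity of the path and $\gcd(p,q)=1$, and is routine.
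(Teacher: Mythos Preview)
Your argument is correct and takes a genuinely different route from the paper. The paper argues by contradiction: it uses that the $\BalancedTwo$ factors $\word_{\text{u}},\word_{\text{v}}$ are themselves lower Christoffel words, so their induced paths must stay weakly below the chords $\mathcal{S}_{\text{u}}$ and $\mathcal{S}_{\text{v}}$; if some interior vertex were strictly closer to $\mathcal{S}$ than $v_i$, one of those paths would have to cross its chord. Your approach instead computes the distance of every interior vertex as $(qx_j-py_j)/\sqrt{p^2+q^2}$, observes the integer numerator is at least $1$ by coprimality, and then verifies that the factorization vertex realises exactly $qp_1-pq_1=1$ via the Stern--Brocot determinant relation $p_1q_2-p_2q_1=1$ imported from~\cite{BL93,BLRS08}. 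This is more arithmetic but also more informative: you obtain the explicit minimum distance $1/\sqrt{p^2+q^2}$ and a clean uniqueness statement, neither of which the paper's proof makes explicit. The paper's proof is shorter and stays purely geometric, avoiding any appeal to the Farey/Stern--Brocot structure, but its key implication (``closer to $\mathcal{S}$ forces crossing the chord'') is asserted rather than derived. Both arguments rely on the same underlying fact---that the factors are Christoffel---but exploit it in different ways.
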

\begin{proof}
Since $\wordC$ is a Christoffel word it is also a $\BalancedTwo$ Lyndon word. Let the $\BalancedTwo$ factorization be $\wordC = \word_{\text{u}} \word_{\text{v}}$, where the factors have defining line segments $\mathcal{S}_{\text{u}}$, $\mathcal{S}_{\text{v}}$ respectively, i.e., $\mathcal{S}_{\text{u}}$ is the line from $(0,0)$ to the point~$s$ being the geometric representation of $\wordC[1..|\word_{\text{u}}|]$, and $\mathcal{S}_{\text{v}}$ is the line from $s$ to~$(p,q)$, 
cf.\ \cref{fig_Christ} with $\mathcal{S}_{\text{u}} = \texttt{aabab}$ and $s = (3,2)$.
Now assume that there exists a point~$r$ that geometrically represents a prefix $\wordC[1..i]$ with the property that $r$ does not have the shortest Euclidean distance to $\mathcal{S}$. Then at least one of the paths for $\word_{\text{u}}$ and $\word_{\text{v}}$ must cross their associated line $\mathcal{S}_{\text{u}}$ or $\mathcal{S}_{\text{v}}$, contradicting the geometric definition of a Christoffel word.  
\end{proof}

  \subsection{Balanced Words}\label{secBalanced}
	A binary string~\word{} is called a \Define{balanced word} if for each character~$c \in \{\texttt{a},\texttt{b}\}$, the number of occurrences of $c$ in $\s{U}$ and in $\s{V}$ differ by at most one, for all pairs of substrings~\s{U} and \s{V} with $|\s{U}| = |\s{V}|$ of the infinite concatenation $\word \cdot \word \cdots$ of~$\word$.

	\begin{lemma}
	  Let $\word$ be a string over the binary alphabet $\{\mathtt{a}, \mathtt{b}\}$ with an arithmetically progressed suffix array $P = [p_1, \dots, p_n]$ with ratio $k$.
		Given that $p_1 \neq k-1$, $\word$ is balanced.
	\end{lemma}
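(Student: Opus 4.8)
The plan is to combine Theorem~\ref{thmBinaryStringCharacteristic} with the Christoffel-word results of \cref{secChristoffel}. Since $\word$ is binary with $\SA_\word = P$ arithmetically progressed of ratio~$k$, the numbers $n$ and $k$ are coprime, and for $k \neq n-1$ Theorem~\ref{thmBinaryStringCharacteristic} forces $\word$ to be one of exactly three strings, indexed by $p_1 \in \{1, k+1, n\}$ (the case $k = n-1$ being covered by Theorem~\ref{thmUnaryEnumeration} together with Cases~2 and~3). The notion of balance in the statement is the balance of the two-sided periodic word $\word^\infty$, a property that depends only on the set of factors of $\word^\infty$. Since this factor set is invariant under cyclic rotation of $\word$, I may freely replace $\word$ by any conjugate; concretely, $\word^\infty$ is balanced if and only if the positions of the \texttt{a}'s in $\word^\infty$ have successive gaps taking at most two values that differ by one, which is exactly the condition that $\word$ be a conjugate of a power of a Christoffel word.

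Two of the three cases fall out immediately. For $p_1 = 1$ (Case~3), Theorem~\ref{thmChristoffelProgression} identifies $\word$ with the lower Christoffel word $\wordC$ of slope determined by $k = q^{-1}$; since the periodic extension $\wordC^\infty$ of a Christoffel word is a periodic mechanical word, it is balanced (a classical fact, cf.\ \cite{BLRS08}), so Case~3 is balanced. For $p_1 = n$ (Case~1), I would apply Lemma~\ref{lemRotate}: the Case~1 string is precisely the first cyclic rotation $\word'$ of the Case~3 Christoffel word (the rotation with $\SA_{\word'}[1] = n$). Being a conjugate of $\wordC$, it has $(\word')^\infty$ with the same factors as $\wordC^\infty$, and hence is balanced as well.

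This leaves $p_1 = k+1$ (Case~2), which is the crux. Here $\word$ is periodic with period $n-k$ but is, in general, \emph{not} a conjugate of a power of a Christoffel word, and it need not be balanced: for $n = 8$, $k = 5$ the Case~2 string is $\texttt{bbabbabb}$, and $(\texttt{bbabbabb})^\infty$ contains both $\texttt{abba}$ (two \texttt{a}'s) and $\texttt{bbbb}$ (no \texttt{a}), so it is unbalanced. The role of the hypothesis is therefore to discard exactly this periodic, non-Lyndon string, after which only the two Christoffel cases remain and the proof is complete. I expect the main obstacle to be this last reconciliation: one must pin down which value of $p_1$ the exclusion has to target. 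A direct inspection of the three cases shows that the unique string that can fail to be balanced is the periodic Case~2 string, which in the notation of Theorem~\ref{thmBinaryStringCharacteristic} carries $p_1 = k+1$; this is the value the hypothesis must forbid. Matching this exclusion to the stated restriction $p_1 \neq k-1$ is the delicate point of the argument, since it is the removal of Case~2 (and of no balanced string) that actually drives the result, and one should verify that the restriction leaves Cases~1 and~3 untouched.
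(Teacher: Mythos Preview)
Your approach is correct and takes a genuinely different route from the paper. The paper's proof is entirely BWT-based: it invokes \cref{thmMatrixBWT} and \cref{corSimpleBWT} to conclude that $\BWT_\word = \texttt{b}^x\texttt{a}^y$ (both BWT definitions agree for the strings of \cref{eqInducedTernaryWord}, which covers Cases~1 and~3 of \cref{thmBinaryStringCharacteristic} since there the binary string \emph{is} the string of \cref{eqInducedTernaryWord}), and then appeals to the Restivo--Rosone characterization~\cite{restivo2009balanced} that a binary word is balanced if and only if its matrix-BWT is $\texttt{b}^p\texttt{a}^q$ with $p,q$ coprime; coprimality of $x = k^{-1}$ and $y = n - k^{-1}$ then follows from the computations of \cref{secInversePermutation}. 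Your argument instead identifies Case~3 directly with a lower Christoffel word via \cref{thmChristoffelProgression} and obtains Case~1 as its cyclic rotation via \cref{lemRotate}, so balance follows from the classical fact that Christoffel words are balanced together with rotation-invariance of the factor set of $\word^\infty$. Your route stays within the paper's own toolkit and avoids the external equivalence; the paper's route is shorter once that equivalence is granted.

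You are also right about the hypothesis. Both proofs work by discarding Case~2 ($p_1 = k+1$), and the paper's own proof literally reads ``Since $p_1 \neq k-1$ we have $p_1 \in \{1, n\}$,'' which only makes sense if $k-1$ is a typo for $k+1$. Your counterexample $\texttt{bbabbabb}$ for $n=8$, $k=5$ confirms that Case~2 can fail balance, while Cases~1 and~3 have $p_1 \in \{n, 1\}$, neither equal to $k+1$ for $k \neq n-1$; so the corrected hypothesis $p_1 \neq k+1$ removes exactly the bad case and leaves your two Christoffel-type cases intact.
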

	\begin{proof}
		According to \cref{thmMatrixBWT} for $\word$ the $\BWT$ defined on the suffix array and the $\BWT$ defined on the BWT matrix are identical.
		From \cref{corSimpleBWT} we know that the $\BWT$ of $\word$ has the shape $\texttt{b}^x \texttt{a}^{y}$.
		By~\cite[Thm.~2]{restivo2009balanced} for binary words the following conditions are equivalent:
		\begin{enumerate}
			\item There are two coprime numbers~$p$ and $q$ such that $\BWT_\word = \texttt{b}^p\texttt{a}^q$; \item \word{} is balanced.
		\end{enumerate}
		We conclude the proof by showing that $x$ and $y$ are co-prime.
		Since $p_1 \neq k-1$ we have $p_1 \in \{1, n\}$.
		Using the results from \cref{secInversePermutation} the number of \texttt{a}'s in \word{} is $n - k^{-1}$.
		Thus the number of \texttt{b}'s is $k^{-1}$.
		As $k^{-1}$ is co-prime to $n$, $n - k^{-1}$ must be co-prime to $k^{-1}$.
	\end{proof}

\subsection{Relation to the Fibonacci word sequence}\label{secFibonacci}
The Fibonacci sequence is a sequence of binary strings~$\{\s{F}_m\}_{m \ge 1}$
with  $\s{F}_1 :=\texttt{b}$, $\s{F}_2 := \texttt{a}$, and $\s{F}_m := \s{F}_{m-1} \s{F}_{m-2}$.
Then
$\s{F}_m$ and $f_m := |\s{F}_m|$ are called the $m$-th \Define{Fibonacci word} and the $m$-th \Define{Fibonacci number}, respectively.

{K\"{o}ppl and I}~\cite[Thm.~1]{koppl15fibonacci} observed that
the suffix array of $\s{F}_m$ for even~$m$ is the arithmetically progressed permutation~$\SA_{\s{F}_m}$ with ratio~$f_{m-2}$ 
mod $f_m$ and $\SA_{\s{F}_m}[1] = f_m$.
\Cref{thmBinaryStringCharacteristic} generalizes this observation by characterizing all binary strings whose suffix arrays are arithmetically progressed.
Hence, $\s{F}_m$ must coincide with Case~1 of \cref{thmBinaryStringCharacteristic} since it ends with character~\texttt{a}.

\begin{lemma}
	The Fibonacci word $\s{F}_m$ for even~$m$ is given by
	\begin{equation*}
		\s{F}_m[i] =
		\begin{cases}
			\mathtt{a} & \text{~if~} 1 + i \cdot f_{m-2} \bmod f_m \le f_{m-1} \text{, or~}\\
			\mathtt{b} & \text{~otherwise.}
		\end{cases}
	\end{equation*}
\end{lemma}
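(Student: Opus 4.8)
The plan is to read the closed form off directly from Case~1 of \cref{thmBinaryStringCharacteristic} together with the inverse-permutation identities of \cref{secInversePermutation}, once we have pinned down the progression ratio of $\s{F}_m$ and its multiplicative inverse modulo $f_m$.

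First I would recall, from the discussion preceding the lemma, that for even $m$ the suffix array $\SA_{\s{F}_m}$ is an arithmetically progressed permutation with ratio $k = f_{m-2} \bmod f_m$ and first entry $\SA_{\s{F}_m}[1] = f_m = n$; for $m \ge 4$ this is simply $k = f_{m-2}$, and $m = 2$ is the trivial unary case $\s{F}_2 = \texttt{a}$. Since $\s{F}_m$ ends with \texttt{a}, it is an instance of Case~1 of \cref{thmBinaryStringCharacteristic} (so $p_1 = \SA[1] = n$), and hence by \cref{eqInducedBinaryInverse} we have $\s{F}_m[i] = \texttt{a}$ exactly when $\ISA[i] \le s$, where $\ISA$ is the inverse suffix array of $\s{F}_m$ and $s$ is its split index.

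The heart of the argument is computing the multiplicative inverse of $k = f_{m-2}$ modulo $f_m$. Here I would use Cassini's identity in the form $f_{m-2} f_m - f_{m-1}^2 = (-1)^{m-1}$, which for even $m$ gives $f_{m-1}^2 \equiv 1 \pmod{f_m}$; combined with $f_{m-1} = f_m - f_{m-2} \equiv -f_{m-2} \pmod{f_m}$ this yields $f_{m-2}^2 \equiv f_{m-1}^2 \equiv 1 \pmod{f_m}$. Thus $k$ is self-inverse, $k^{-1} = f_{m-2}$ (as an element of $[1..n]$). Substituting into the Case~1 formulas of \cref{secInversePermutation}, where $\SA[n] = n - k = f_{m-1}$, we obtain $s = n - k^{-1} \bmod n = f_{m-1}$ and
\[
\ISA[i] = (i - \SA[n])\, k^{-1} \bmod n = (i f_{m-2} - f_{m-1} f_{m-2}) \bmod f_m = (i f_{m-2} + 1) \bmod f_m ,
\]
using $f_{m-1} f_{m-2} = f_m f_{m-2} - f_{m-2}^2 \equiv -1 \pmod{f_m}$. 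Consequently $\s{F}_m[i] = \texttt{a}$ if and only if $(i f_{m-2} + 1) \bmod f_m \le f_{m-1}$, which is the claimed inequality once the inner reduction $i f_{m-2} \bmod f_m$ is read with values in $\{0,\dots,f_m-1\}$ (the $\bmodz$ convention), so that $1 + (i f_{m-2} \bmodz f_m)$ and $(i f_{m-2} + 1) \bmod f_m$ coincide.

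I expect the only genuine obstacle to be the number-theoretic step identifying $k^{-1}$ via Cassini's identity, and then keeping the two modular-reduction conventions straight so that the resulting inequality matches the statement verbatim; the rest is plugging already-known quantities into results established in \cref{secAPSA} and \cref{secInversePermutation}. As a sanity check I would note that $s = f_{m-1}$ is indeed the number of \texttt{a}'s in $\s{F}_m$, and that the period $n - k = f_{m-1}$ predicted by Case~1 matches the well-known period of the finite Fibonacci word.
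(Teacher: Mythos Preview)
Your proposal is correct and follows essentially the same route as the paper: identify $\s{F}_m$ with Case~1 of \cref{thmBinaryStringCharacteristic}, invoke \cref{eqInducedBinaryInverse}, compute $k^{-1}=f_{m-2}$, and simplify $\ISA[i]$ to $(i f_{m-2}+1)\bmod f_m$. The only notable difference is that the paper cites the identity $f_{m-2}^2\equiv 1\pmod{f_m}$ for even~$m$ from the literature, while you derive it from Cassini's identity; your final remark on the $\bmod$/$\bmodz$ conventions is a welcome clarification, since the intended reading of the statement is $(1+i f_{m-2})\bmod f_m$ with the paper's convention.
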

\begin{proof}
	We use the following facts:
\begin{itemize}
\item The greatest common divisor of $f_i$ and $f_j$ is the Fibonacci number whose index is the greatest common divisor of~$i$ and~$j$~\cite[Fibonacci numbers]{wells05prime}.
Hence, $f_{m-1}$ and $f_{m}$ are coprime for every~$m \ge 2$.
\item $f^2_{m-2} \mod f_m = 1$ holds for every even~$m \ge 3$~\cite{hoggatt77fibonacci}.
	Hence, $k^{-1} = k = f_{m-2}$.
\item By definition, $\s{F}_m[f_m] = \texttt{a}$ if $m$ is even, and therefore $\SA_{\s{F}_m}[1] = f_m$.
\end{itemize}
The split position~$p_s$ is $p_s = f_m - k = f_{m-1}$.
So $\SA_{\s{F}_m}[f_m] = f_m - k = f_m - f_{m-2}$.
By \cref{thmInversePermutation}, 
$\ISA_{\s{F}_m}[i] 
=  i f_{m-2} - \SA_{\s{F}_m}[f_m] f_{m-2} \mod f_m
= i f_{m-2} + 1 \mod f_m $,
where 
$- \SA_{\s{F}_m}[f_m] f_{m-2} \mod f_m = 
(f_{m-2} - f_m)  f_{m-2} \mod f_m =
1 - f_m f_{m-2} \mod f_m = 
1$.
The rest follows from \cref{eqInducedBinaryInverse}.
\end{proof}

\newcommand*{\Rabbit}[1]{\ensuremath{\s{\bar{F}}_{#1}}}
Let $\Rabbit{m}$ denote the $m$-th Fibonacci word whose \texttt{a}'s and \texttt{b}'s are exchanged, 
i.e., $\Rabbit{m} = \texttt{a} \Leftrightarrow \s{F}_m = \texttt{b}$.
\begin{lemma}
	$\SA_{\Rabbit{m}}$ is arithmetically progressed with ratio~$f_{m-2}$ for odd~$m$.
\end{lemma}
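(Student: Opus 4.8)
The plan is to realize $\Rabbit{m}$ (for odd $m \ge 3$) as a single cyclic rotation of a suitable lower Christoffel word, and then invoke \cref{thmChristoffelProgression} together with \cref{lemRotate}. Write $\overline{\s{u}}$ for the image of a binary string $\s{u}$ under the exchange morphism $\mathtt{a} \leftrightarrow \mathtt{b}$, so that $\Rabbit{m} = \overline{\s{F}_m}$, and set $n := f_m$, $k := f_{m-2}$; these are coprime since $\gcd(f_m, f_{m-2}) = f_{\gcd(m,m-2)} = f_1 = 1$ for odd $m$. Recall the classical structure of Fibonacci words: for $m \ge 3$ the prefix $w_m := \s{F}_m[1..f_m-2]$ is a palindrome and a central word (it has the two coprime periods $f_{m-1}$ and $f_{m-2}$, whose sum is $|w_m|+2$), while $\s{F}_m = w_m\,\mathtt{ab}$ for odd $m$; see~\cite{BLRS08}. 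Since the exchange morphism maps central words to central words (it preserves being a palindrome and preserves periods), $\overline{w_m}$ is again central, so $\wordC := \mathtt{a}\,\overline{w_m}\,\mathtt{b}$ is a lower Christoffel word~\cite{BLRS08}. Counting letters (using that $\s{F}_m$ has $f_{m-1}$ letters $\mathtt{a}$ and $f_{m-2}$ letters $\mathtt{b}$), $\wordC$ has $1 + |w_m|_{\mathtt{b}} = f_{m-2}$ occurrences of $\mathtt{a}$ and hence $f_{m-1}$ occurrences of $\mathtt{b}$, i.e.\ $\wordC$ is exactly the lower Christoffel word characterized by $p = f_{m-2}$ and $q = f_{m-1}$. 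The key identity is then $\wordC[2..n]\,\wordC[1] = \overline{w_m}\,\mathtt{ba} = \overline{w_m\,\mathtt{ab}} = \overline{\s{F}_m} = \Rabbit{m}$, i.e.\ $\Rabbit{m}$ is the first cyclic rotation of $\wordC$ (first letter moved to the end).

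With this in hand, \cref{thmChristoffelProgression} gives that $\SA_{\wordC}$ is an arithmetically progressed permutation with $\SA_{\wordC}[1] = 1$ and ratio $q^{-1} = f_{m-1}^{-1} \bmod n$ (this is Case~3 of \cref{thmBinaryStringCharacteristic}). Applying \cref{lemRotate} to $\wordC$ — whose suffix array starts with $1$ and which contains exactly $f_{m-2}$ letters $\mathtt{a}$ — yields that $\SA_{\Rabbit{m}} = \SA_{\wordC[2..n]\wordC[1]}$ is the $f_{m-2}$-th cyclic rotation of $\SA_{\wordC}$; in particular it is again arithmetically progressed with the same ratio $f_{m-1}^{-1} \bmod n$ and with $\SA_{\Rabbit{m}}[1] = n$. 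It then remains to verify $f_{m-1}^{-1} \equiv f_{m-2} \pmod{f_m}$, i.e.\ $f_{m-1}f_{m-2} \equiv 1 \pmod{f_m}$. This follows from Cassini's identity $f_{m-2}f_m - f_{m-1}^2 = (-1)^{m-1}$, which for odd $m$ gives $f_{m-1}^2 \equiv -1 \pmod{f_m}$, together with $f_{m-2} \equiv -f_{m-1} \pmod{f_m}$, so that $f_{m-1}f_{m-2} \equiv -f_{m-1}^2 \equiv 1 \pmod{f_m}$. Hence $\SA_{\Rabbit{m}}$ is arithmetically progressed with ratio $f_{m-2}$, and (for $m \ge 5$, where $f_{m-2} \ne f_m - 1$) it is the Case~1 string of \cref{thmBinaryStringCharacteristic}.

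I expect the first paragraph to be the main obstacle: correctly identifying the Christoffel word whose rotation is $\Rabbit{m}$. The naive candidate — the Christoffel word built from the Fibonacci central word $w_m$ itself — has the wrong number of $\mathtt{a}$'s, so one must pass to its complement $\overline{w_m}$, and the crux is to argue that $\overline{w_m}$ is still a central word (equivalently, that $\mathtt{a}\,\overline{w_m}\,\mathtt{b}$ is a Christoffel word) and to read off its parameters $(p,q) = (f_{m-2}, f_{m-1})$. Everything afterwards is a routine application of \cref{thmChristoffelProgression}, \cref{lemRotate}, and elementary Fibonacci congruences. The boundary cases are handled directly: for $m = 1$ we have $\Rabbit{1} = \mathtt{a}$ and the statement is trivial, and for $m = 3$ we have $\wordC = \mathtt{ab}$, $\Rabbit{3} = \mathtt{ba}$, and $\SA_{\mathtt{ba}} = [2,1]$ is arithmetically progressed with ratio $1 = f_1$, consistent with the general argument.
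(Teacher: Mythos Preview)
Your argument is correct and takes a genuinely different route from the paper. The paper proves the suffix ordering directly: it uses the relation $\lessdot$ (strictly smaller and not a prefix), invokes~\cite[Lemma~7]{koppl15fibonacci} to get $\s{F}_m[i..] \gtrdot \s{F}_m[i+f_{m-2}..]$ for $i \in [1,f_{m-1})$ (hence $\Rabbit{m}[i..] \lessdot \Rabbit{m}[i+f_{m-2}..]$ after swapping letters), and handles $i \in (f_{m-1},f_m]$ via the prefix relation coming from $\s{F}_m = \s{F}_{m-2}\s{F}_{m-3}\s{F}_{m-2}$, thereby assembling the full chain of inequalities one step at a time.

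Your approach is more structural: you recognize $\Rabbit{m}$ as the first rotation of the lower Christoffel word $\mathtt{a}\,\overline{w_m}\,\mathtt{b}$ (via the central-word characterization of Christoffel words), and then simply invoke \cref{thmChristoffelProgression} and \cref{lemRotate} from earlier in the paper, finishing with an elementary Fibonacci congruence from Cassini's identity. This buys you a proof that reuses the paper's own machinery and makes the Christoffel connection explicit, at the price of importing the central-word characterization from~\cite{BLRS08}. The paper's proof, by contrast, is self-contained modulo the cited combinatorial lemma on Fibonacci suffixes and does not need \cref{thmChristoffelProgression} or \cref{lemRotate}. Both are clean; yours arguably integrates better with \cref{secChristoffel}.
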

\begin{proof}
	Since $\Rabbit{m}[|\Rabbit{m}|] = \texttt{a}$ for odd~$m$, 
	$\Rabbit{m}[f_m..]$ is the lexicographically smallest suffix.
	Hence, $\SA := \SA_{\Rabbit{m}} = |\Rabbit{m}|$.
	If $\SA$ is arithmetically progressed with ratio~$k$, then its split position must be $p_s = n-k-1$ according to~\cref{thmBinaryStringCharacteristic}.
We show that $k = f_{m-2}$ by proving
\begin{align*}
\Rabbit{m}[f_m ..] \prec \Rabbit{m}[f_m + f_{m-2} \bmod f_m .. ] 
&\prec  \Rabbit{m}[f_m + 2f_{m-2} \bmod f_m .. ] 
\prec \dots  \\
&\prec \Rabbit{m}[f_m + (f_m - 1)f_{m-2} \bmod f_m ..]
\end{align*}
 in a way similar to~\cite[Lemma~8]{koppl15fibonacci}.
 For that, let \s{\bar{S}} of a binary string~$\s{S} \in \{\texttt{a},\texttt{b}\}^*$ denote \s{S} after exchanging \texttt{a}'s and \texttt{b}'s (i.e., $\s{\bar{S}} = \texttt{a} \Leftrightarrow \s{S} = \texttt{b}$).
 Further, let $\lessdot$ be the relation on strings such that $\s{S} \lessdot \s{T}$ if and only if $\s{S} \prec \s{T}$ and \s{S} is \emph{not} a prefix of \s{T}.
 We need this relation since $\s{S} \lessdot \s{T} \Longleftrightarrow \s{\bar{S}} \gtrdot \s{\bar{T}}$ while
 $\s{S} \prec \s{T}$ and $\s{\bar{S}} \prec \s{\bar{T}}$ holds if \s{S} is a prefix of \s{T}.

	\begin{itemize}
		\item For $i \in [1..f_{m-1})$, we have $\s{F}_m[i..] \gtrdot \s{F}_m[i + f_{m-2}..]$ due to~\cite[Lemma~7]{koppl15fibonacci}, 
			thus $\Rabbit{m}[i..] \lessdot \Rabbit{m}[i + f_{m-2}..]$.
		\item For $i \in (f_{m-1}..f_m]$, since 
			$\s{F}_m = \s{F}_{m-1} \s{F}_{m-2} = \s{F}_{m-2}\s{F}_{m-3} \s{F}_{m-2}$, 
			$\s{F}_m[i..]$ is a prefix of $\s{F}_m[i - f_{m-1}..] = \s{F}_m[i + f_{m-2} \bmod f_m ..]$.
			Therefore, $\Rabbit{m}[i..]$ is a prefix $\Rabbit{m}[i + f_{m-2} \bmod f_m ..]$ and $\Rabbit{m}[i..] \prec \Rabbit{m}[i + f_{m-2} \bmod f_m ..]$.
\end{itemize}
		Since $f_m$ and $f_{m-2}$ are coprime, $\{ i + f_{m-2} \mod f_m \mid i > 0 \} = [1..n]$.
		Starting with the smallest suffix $\Rabbit{m}[f_m..]$, 
		we end up at the largest suffix $\Rabbit{m}[f_m+(f_m-1)f_{m-2} \bmod f_m..]$ after $m-1$ arithmetic progression steps of the form
		$\Rabbit{m}[f_m+ i f_{m-2} \bmod f_m..]$ for $i \in[0..f_m-1]$.
		By using one of the two above items we can show that these arithmetic progression steps yield a list of suffixes sorted in lexicographically ascending order.
\end{proof}

\section{Extension to Larger Alphabets}
	\label{secLargerAlphabets}
	In this section we extend our results of \cref{secAPSA} to alphabets of arbitrary size.
	Let $P = [p_1, \dots, p_n]$ be an arithmetically progressed permutation with ratio $k$.
	Let $\Sigma = \{1, 2, \dots, \sigma\}$ be an alphabet of size $\sigma$ where the order is given by $1 < 2 < \dots < \sigma$.
	To construct a string $\word$ over the alphabet $\Sigma$ having $P$ as its suffix array we proceed similarly to the construction presented in the previous sections:
	First we split $P$ into subarrays $S_1, \dots, S_\sigma$, then for each subarray $S_i$ we assign the character $i$ to each position $p \in S_i$.
	When splitting $P$ into subarrays there are some fixed positions where we are required to split $P$, while the remaining splitting positions can be chosen freely.
	Let $\sigma_{\text{min}}$ be the size of the minimal alphabet over which there is a string having $P$ as its suffix array.
	Then there are $\sigma_{\text{min}} - 1$ positions where we are required to split $P$.
	Those required splitting positions are after the following entries, modulo $n$:
	\begin{align*}
		&\{p_1 - k - 1, n - k\} && \text{if } p_1 \notin \{1, k + 1, n\}\text{,}\\
		&\{n - k\} && \text{if } p_1 \in \{1, k+1\}\text{,}\\
		&\{p_1 - k - 1\} && \text{if }p_1 = n\text{ and }k\neq n-1\text{.}
	\end{align*}

	\begin{example}
		Consider the alphabet $\Sigma = \{\texttt{a}, \texttt{b}, \texttt{c}, \texttt{d}, \texttt{e}\}$ with $\sigma := \vert \Sigma\vert = 5$ and order $\texttt{a}<\texttt{b}<\texttt{c}<\texttt{d}<\texttt{e}$, and the permutation $P = \lbrack 5, 2, 7, 4, 1, 6, 3, 8\rbrack$ which has length $n = 8$, is arithmetically progressed with ratio $k = 5$ and starts with the entry $p_1 = 5$.
		The minimum alphabet size to construct a string having $P$ as suffix array is $\sigma_{\text{min}} = 3$.
		Thus there are two required splitting positions.
		Those are after $3$ and $7$.
		Using vertical bars $|$ to denote the splitting positions we obtain $P = \lbrack 5, 2, 7| 4, 1, 6, 3| 8\rbrack$.
		There are $\sigma - \sigma_{\text{min}}$ splitting positions left which can be chosen freely.
		Assume we choose to split after entries $1$ and $2$.
		This leads to $P = \lbrack 5, 2| 7| 4, 1| 6, 3| 8\rbrack$.
		Assigning characters we obtain the string $\word = \texttt{cadcadbe}$.
	\end{example}

	At the beginning of this paper we have looked at the strings over an alphabet of size $\sigma$ having the suffix array~$[n,n-1,\ldots,1]$.
	The number of those strings is given by \cref{thmUnaryEnumeration} and is bounded by a polynomial in $n$ and~$\sigma$.
	We extend upon that result:
	For an arbitrary permutation, which is not necessarily arithmetically progressed we give a bound on the number of strings with that permutation as suffix array.
	For a fixed arithmetically progressed permutation~$P$ we give an exact formula for the number of strings having $P$ as their suffix array.
	We conclude that in total the number of strings having an arithmetically progressed suffix array is bounded by a polynomial in $n$ and~$\sigma$.

	\begin{lemma}
		\label{lemEnumerationBound}
		Let $\Sigma$ be an alphabet of size $\sigma = \vert \Sigma \vert$.
		Given a permutation $P$ of length $n$ there are at most $n + \sigma - 1 \choose n$ strings of length $n$ over the alphabet $\Sigma$ having $P$ as their suffix array.
	\end{lemma}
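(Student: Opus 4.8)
The plan is to exhibit an injection from the set of strings of length~$n$ over $\Sigma$ whose suffix array is~$P$ into the set of non-decreasing length-$n$ sequences over $\Sigma$, and then to count the latter by stars and bars. First I would fix $P = [p_1, \ldots, p_n]$ and, for each string $\word$ of length~$n$ over $\Sigma$ with $\SA_\word = P$, define the sequence $c(\word) := (\word[p_1], \word[p_2], \ldots, \word[p_n])$, i.e., the first characters of the suffixes listed in suffix-array order. Since $P$ is a permutation of $[1..n]$, the string $\word$ is completely recovered from $c(\word)$ together with $P$: for every text position~$j$ we have $\word[j] = \word[p_{P^{-1}[j]}] = c(\word)[P^{-1}[j]]$. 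Hence the map $\word \mapsto c(\word)$ is injective on the set of strings having $P$ as their suffix array.

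Next I would show that $c(\word)$ is always non-decreasing with respect to the order on $\Sigma$. For any consecutive indices $i$ and $i+1$ we have $\word[p_i..n] \prec \word[p_{i+1}..n]$ by definition of the suffix array; comparing the leading characters, if $\word[p_i] > \word[p_{i+1}]$ held, then the lexicographic comparison would already yield $\word[p_i..n] \succ \word[p_{i+1}..n]$, a contradiction. Therefore $\word[p_1] \le \word[p_2] \le \cdots \le \word[p_n]$, so the image of $c$ is contained in the set $N$ of non-decreasing sequences of length~$n$ over $\Sigma$.

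Finally I would count $N$. A non-decreasing sequence of length~$n$ over a totally ordered alphabet of size~$\sigma$ is exactly the data of a size-$n$ multiset drawn from the $\sigma$ characters, equivalently a distribution of $n$ indistinguishable stars among $\sigma$ bins delimited by $\sigma-1$ bars; this is the classical stars-and-bars count $\binom{n+\sigma-1}{n}$ (\cite[Chp.~II, Sect.~5]{feller68probability}), precisely as invoked for \cref{thmUnaryEnumeration}. Composing the injection $c$ with $|N| = \binom{n+\sigma-1}{n}$ gives the stated bound. No genuine obstacle arises here beyond careful bookkeeping; the only point worth noting is that the inclusion $c(\{\word : \SA_\word = P\}) \subseteq N$ is in general strict (not every non-decreasing character sequence is realised by a string whose suffix array is $P$), so the bound need not be tight, although it is attained for $P = [n, n-1, \ldots, 1]$ by \cref{thmUnaryEnumeration}.
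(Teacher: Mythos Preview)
Your proof is correct and follows essentially the same approach as the paper: both define the sequence $\word'[i] = \word[P[i]]$ (your $c(\word)$), observe that it is non-decreasing since consecutive entries of $\SA_\word$ index lexicographically increasing suffixes, argue injectivity because $P$ is a permutation, and then count non-decreasing sequences via stars and bars. The paper likewise remarks that the bound need not be tight.
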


	\begin{proof}
		Let the alphabet $\Sigma$ be given by $\Sigma = \{1, \dots, \sigma\}$ with the order $1 < 2 < \dots < \sigma$.
		Let $\word$ be a string over $\Sigma$ of length $n$ whose suffix array is $P$.
		Permuting $\word$ by its suffix array $P$ we obtain the string $\word'$ with $\word'[i] = \word[P[i]]$.
		As $P$ is the suffix array of $\word$ we have $\word'[i..n] \prec \word'[j..n]$ and thus $\word'[i] \le \word'[j]$ for all $i < j$.
		Therefore $\word'$ is given by $\word' = 1^{t_1}\cdots \sigma^{t_\sigma}$, where $t_i$ describes the number of occurrences of $i$ in $\word'$.
		The problem of finding the number of strings with this particular shape can be reduced to the classic stars and bars problem~\cite[Chp.~II, Sect.~5]{feller68probability} with $n$ stars and $\sigma - 1$ bars, yielding $n + \sigma - 1 \choose n$ possible strings.
		As the permutation $P$ is a bijection on $\Sigma^n$ there is only one string $\word$ that, permuted by $P$, gives the string $\word'$.
		Thus there are at most $n + \sigma - 1 \choose n$ strings having $P$ as suffix array.
		There may be less as it is possible that a string $\Word$, which is a pre-image of some $\word' = 1^{t_1}\cdots \sigma^{t_\sigma}$ under the permutation $P$, has a suffix array different to $P$.
	\end{proof}

	\begin{lemma}
		\label{lemEnumerationGeneralized}
		Let $\Sigma$ be an alphabet of size $\sigma = \vert \Sigma \vert$.
		Given a permutation $P$ of length $n$, let $\sigma_{\text{min}}$ be the size of the smallest alphabet over which there exists a string having $P$ as its suffix array.
		Then there are at most $n + \sigma - 1 \choose \sigma - \sigma_{\text{min}}$ strings of length $n$ over the alphabet $\Sigma$ having $P$ as their suffix array.
	\end{lemma}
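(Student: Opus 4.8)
The plan is to recycle the injective correspondence from the proof of \cref{lemEnumerationBound} and then sharpen the enumeration by exploiting the $\sigma_{\text{min}}-1$ \emph{forced} split positions of~$P$ that were discussed just before the lemma. As in \cref{lemEnumerationBound}, assume $\Sigma=\{1,\dots,\sigma\}$ with $1<\dots<\sigma$, and send a string $\word$ of length~$n$ with $\SA_\word=P$ to the permuted word $\word'$ defined by $\word'[i]:=\word[P[i]]$. This assignment is injective, and $\SA_\word=P$ forces $\word'[i]\le\word'[i+1]$ for every~$i$, i.e.\ $\word'$ is weakly increasing. So it suffices to bound the number of weakly increasing words $\word'\in\{1,\dots,\sigma\}^n$ that are images of such~$\word$.

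Next I would isolate the combinatorial skeleton. Call an index $i\in[1..n-1]$ a \emph{forced ascent} if \emph{every} string $\word$ with $\SA_\word=P$ has $\word'[i]<\word'[i+1]$; these are precisely the positions at which one is required to split $P$ into subarrays. The key claim is that there are exactly $d:=\sigma_{\text{min}}-1$ forced ascents. The bound ``at most $\sigma_{\text{min}}-1$'' is routine: a weakly increasing word with $d'$ strict ascents uses at least $d'+1$ distinct characters, so if there were $d'$ forced ascents then every realizing string would need $\ge d'+1$ characters, whence $\sigma_{\text{min}}\ge d'+1$. For the reverse inequality one takes the word $\word'$ that strictly increases exactly at the forced ascents (so it uses $d+1$ characters) and checks, via $\word[P[i]]:=\word'[i]$, that the resulting string has suffix array~$P$, exhibiting a realizing string over $d+1$ letters and giving $\sigma_{\text{min}}\le d+1$. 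This last verification — equivalently, that the greedy minimal-alphabet reconstruction of a string from~$P$ is valid, i.e.\ that $i$ is a forced ascent iff $P^{-1}[P[i]+1]>P^{-1}[P[i+1]+1]$ under the convention $P^{-1}[n+1]:=0$ — is the step I expect to be the main obstacle; for arithmetically progressed~$P$ it follows from the splitting analysis earlier in the paper, and in general it is a standard fact about suffix arrays that would need to be proved or cited.

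Granting that there are $d=\sigma_{\text{min}}-1$ forced ascents $c_1<\dots<c_d$ in $[1..n-1]$, every admissible $\word'$ is a weakly increasing word over $\{1,\dots,\sigma\}$ with a strict ascent at each $c_l$, so I would finish with the classical ``shift'' bijection: the word $\widetilde{\word'}[j]:=\word'[j]-\#\{\,l:c_l<j\,\}$ is weakly increasing of length~$n$ over $\{1,\dots,\sigma-d\}$, and $\word'\mapsto\widetilde{\word'}$ is a bijection (one uses $\word'[j]\ge 1+\#\{l:c_l<j\}$ to see $\widetilde{\word'}[j]\ge 1$, and $\widetilde{\word'}[n]=\word'[n]-d\le\sigma-d$). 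By stars and bars there are $\binom{n+(\sigma-d)-1}{n}=\binom{n+\sigma-d-1}{\sigma-d-1}$ such words, so at most that many strings of length~$n$ over~$\Sigma$ have~$P$ as their suffix array. Since $d=\sigma_{\text{min}}-1\ge 0$, monotonicity of $\binom{N}{k}$ in $N$ gives $\binom{n+\sigma-d-1}{\sigma-d-1}\le\binom{n+\sigma-1}{\sigma-d-1}=\binom{n+\sigma-1}{\sigma-\sigma_{\text{min}}}$, the claimed bound. The degenerate case $\sigma<\sigma_{\text{min}}$ is immediate, since then both binomials vanish under the usual convention and no realizing string exists.
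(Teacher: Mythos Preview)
Your approach is essentially the same as the paper's: map each realizing string to the weakly increasing word $\word'[i]=\word[P[i]]$, observe that $\sigma_{\text{min}}-1$ ascent positions are forced, and count via stars and bars. The paper phrases the count directly as ``start with $n+\sigma-1$ stars, choose $\sigma-\sigma_{\text{min}}$ to be free bars, then convert the $\sigma_{\text{min}}-1$ forced positions to bars,'' which is a looser bookkeeping that lands on $\binom{n+\sigma-1}{\sigma-\sigma_{\text{min}}}$ right away; your shift bijection $\widetilde{\word'}[j]=\word'[j]-\#\{l:c_l<j\}$ is cleaner and in fact yields the \emph{exact} count $\binom{n+\sigma-\sigma_{\text{min}}}{\sigma-\sigma_{\text{min}}}$ before you relax it to the stated bound. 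So you recover the lemma and a bit more.

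On the step you flag as the main obstacle---that the number of forced ascents is exactly $\sigma_{\text{min}}-1$, equivalently that the greedy minimal-alphabet reconstruction has suffix array $P$---you are right to single it out. The paper's proof also takes this for granted (``there are $\sigma_{\text{min}}-1$ positions where we are required to split $P$''), having only justified it explicitly for arithmetically progressed~$P$ in the preamble of the section. Your characterization ``$i$ is forced iff $P^{-1}[P[i]+1]>P^{-1}[P[i+1]+1]$'' (with $P^{-1}[n+1]:=0$) is the right one and is a condition on $P$ alone, which is exactly what makes the non-forced positions simultaneously realizable; citing or spelling out this standard suffix-array fact would close the gap in both proofs.
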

	\begin{proof}
		As described at the beginning of this section, all strings over the alphabet $\Sigma$ having $P$ as suffix array can be constructed by splitting $P$ into $\sigma - 1$ subarrays.
		There are $\sigma_{\text{min}} - 1$ positions where we are required to split $P$ and $\sigma - \sigma_{\text{min}}$ splitting positions that can be chosen freely.
		We model the selection of the positions that can be chosen freely using the stars and bars problem~\cite[Chp.~II, Sect.~5]{feller68probability}.
		The stars represent the entries of the permutation, the bars the splitting positions.
		Assume we start with $n + \sigma - 1$ stars.
		Then $\sigma - \sigma_{\text{min}}$ bars (splitting positions) can be freely chosen.
		There are $n + \sigma - 1 \choose \sigma - \sigma_{\text{min}}$ ways to do this.
		Then we replace the stars at the $\sigma_{\text{min}} - 1$ required splitting positions with bars.
		This gives us $n$ stars and $\sigma - 1$ bars, which we map to a partition of $P$ into $\sigma$ subarrays.
	\end{proof}

	\begin{lemma}
		\label{lemTotalBound}
		Let $\Sigma$ be an alphabet of size $\sigma = \vert \Sigma \vert$.
		Then the number of strings of length $n$ with an arithmetically progressed suffix array is at most $p(n,\sigma) = n(n-1){n + \sigma - 1 \choose \sigma - \sigma_{\text{min}}}$.
	\end{lemma}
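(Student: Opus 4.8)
The plan is to multiply two counts: the number of arithmetically progressed permutations of length $n$, and, uniformly over these, an upper bound on the number of strings realizing each of them as a suffix array.

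First I would bound the number of arithmetically progressed permutations of length $n$. Such a permutation $P = [p_1,\ldots,p_n]$ with ratio $k$ is completely determined by the ordered pair $(p_1,k)$, because $p_{i+1} = p_i + k \bmod n$ for every $i$. Since $p_1$ ranges over $[1..n]$ and the ratio $k$ over $[1..n-1]$, there are at most $n(n-1)$ such permutations; in fact there are fewer, since $k$ must be coprime to $n$, but the crude bound is all that is needed and it matches the stated factor $n(n-1)$.

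Next, I would fix one arithmetically progressed permutation $P$ and invoke \cref{lemEnumerationGeneralized}: writing $\sigma_{\text{min}}$ for the size of the smallest alphabet over which some string has suffix array $P$, the number of length-$n$ strings over $\Sigma$ with suffix array $P$ is at most $\binom{n+\sigma-1}{\sigma-\sigma_{\text{min}}}$. Summing this over all at most $n(n-1)$ arithmetically progressed permutations yields the claimed $p(n,\sigma) = n(n-1)\binom{n+\sigma-1}{\sigma-\sigma_{\text{min}}}$, which is polynomial in $n$ and $\sigma$.

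The one point that needs care — and the only real obstacle — is that $\sigma_{\text{min}}$ is a function of $P$: by \cref{thmUnaryEnumeration}, \cref{thmTernaryAlphabet}, and \cref{thmBinaryStringCharacteristic} it equals $1$ for $P = [n,n-1,\ldots,1]$, $2$ in the periodic/Christoffel cases, and $3$ otherwise, so it is always at most $3$. To collapse the sum $\sum_P \binom{n+\sigma-1}{\sigma-\sigma_{\text{min}}(P)}$ into the single stated expression I would either read $\sigma_{\text{min}}$ in the formula as the worst case attained over all $P$, or observe that for $j\in\{1,2,3\}$ the binomials $\binom{n+\sigma-1}{\sigma-j}$ are comparable in the relevant range; stating explicitly which convention is intended makes the final bound unambiguous.
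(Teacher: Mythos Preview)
Your proof is correct and follows essentially the same route as the paper: bound the number of arithmetically progressed permutations by $n(n-1)$ via the parametrization $(p_1,k)$, then apply \cref{lemEnumerationGeneralized} to each and multiply. Your closing remark about $\sigma_{\text{min}}$ depending on $P$ is in fact more careful than the paper, which leaves this point implicit and simply writes a single $\sigma_{\text{min}}$ in the bound without further comment.
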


	\begin{proof}
		Each arithmetically progressed permutation $P$ of length $n$ can be described by two parameters:
		Its first entry $P[1]$ a its ratio $k$.
		For $P[1]$ there are $n$ different options, for $k$ there are at most $n-1$ different options.
		Thus the number of arithmetically progressed permutations of length $n$ is at most $n(n-1)$.
		By \cref{lemEnumerationGeneralized} the number of strings of length $n$ having a specific permutation as suffix array is bounded by $n + \sigma - 1 \choose \sigma - \sigma_{\text{min}}$.
		Putting those two facts together we obtain that the number of strings of length $n$ with an arithmetically progressed suffix array is bounded by $p(n) = n(n-1){n + \sigma - 1 \choose \sigma - \sigma_{\text{min}}}$.
	\end{proof}
	
	Given a fixed alphabet $\Sigma$, by the above lemma, the number of strings of length $n$ with arithmetically progressed suffix array is bounded by a polynomial in $n$ and~$\sigma$.

	\section{Applications on Meta Strings}
	\label{secMeta}

	In this final section we overview some connections of suffix arrays with generalized forms of words and meta strings.
	A generalization of strings was proposed in the 1974 groundbreaking paper of Fischer and Paterson~\cite{FP74}, where string matching was considered in a more general setting than with the usual solid letters, whereby either string could have \emph{don't care}\footnote{Don't care symbols match with all characters.} symbols, and was achieved in time nearly as fast as linear.
	Uncertain sequences, including indeterminate strings, have application in inexact matching tasks, for instance, allowing for errors such as with Web interface data entry and Internet search.
	They are also useful for expressing DNA polymorphisms, that is biological sequence positions that can have multiple possibilities and encoded with IUPAC\footnote{International Union of Pure and Applied Chemistry} meta characters, for example \texttt{N} denotes any of the DNA nucleotides.
	A \emph{codon} is a form of meta character whereby a sequence of three nucleotides encodes a specific amino acid and are used for protein expression;
	so a genetic code can be composed of concatenated codon units.
	The \emph{truncated generalized suffix automaton} was introduced for indexing length-bounded $k$-factors of degenerate DNA and RNA sequences~\cite{FIRVV08}.
	An \emph{elastic-degenerate} string is a sequence of sets of strings used for succinctly representing a multiple alignment of a collection of closely related sequences (e.g. a pan-genome, that is all genes and genetic variation within a species),
and also supports approximate pattern matching~\cite{BPPR20}.
	Sequence alignment is useful for inferring evolutionary relationships between biological sequences.

	Daykin and Watson proposed a simple degenerate BWT, the $D$-BWT~\cite{DW17}, constructed by applying lex-extension order ({\it cf.} Example~\ref{exIndeterm}) to relabel the sets and order the degenerate strings in the $D$-BWT matrix.
	Subsequently in~\cite{DGGLLLMPW19}, the $D$-BWT was applied to pattern matching using the backward search technique.
This formalized and extended work implemented in~\cite{HPB13} presenting a bioinformatics software tool, BWBBLE, for pattern matching on a pan-genome that they called a reference multi-genome. \\

	In what follows, we first formally define indeterminate strings, and subsequently
	illustrate various approaches for defining an \emph{indeterminate suffix array}:
	An \Define{indeterminate string}\footnote{Also known as \emph{degenerate string} in the
	literature, particularly in the field of microbiology.} $\word{}^I = \word{}^I [1..n]$ on an alphabet $\Sigma$ is a sequence of nonempty subsets of $\Sigma$.
	Specifically, an indeterminate string $\word{}^I$ has the form $\word{}^I = t_1 \cdots t_n$ where each ${t_i}$ is a set of characters over $\Sigma$; 
	a singleton is known as a \Define{solid letter}.
	For example, 
	\[
	  \word{'} = \{\texttt{c},\texttt{a},\texttt{b},\texttt{e}\}
	\{\texttt{d}\}
	\{\texttt{c},\texttt{a},\texttt{b},\texttt{e}\}
	\{\texttt{d}\}
	\{\texttt{d}\}
	\{\texttt{c},\texttt{a},\texttt{b},\texttt{e},\texttt{f},\texttt{g}\}
      \]
      is a ternary border-free indeterminate string of length 6, with a singleton \{\texttt{d}\}.

	\begin{example}
		\label{exIndeterm}

		Let $\word{}^I$ = \{\texttt{b}\}\{\texttt{b}\}\{\texttt{c},\texttt{a}\}\{\texttt{b},\texttt{d},\texttt{a}\} be an indeterminate string, then $\SA_{\word{}^I}$ can be alternatively defined by the following approaches:

		\begin{itemize}

			\item We can apply the lex-extension order by treating each character set as a string whose characters are sorted, and use the lexicographic order of these strings to sort the sets, resulting in ranked meta characters:
			$\{\texttt{a,b,d}\} = A;~\{\texttt{a,c}\} = B;~\{\texttt{b}\} = C$.
			Then $\word{}^I$ = CCBA, and $\SA_{\word{}^I} = [4,3,2,1]$, an arithmetically progressed permutation.
			This method enables linear-time construction of an indeterminate suffix array.

			\item Given we do not want the input to be rearranged, then we can form the meta characters as follows: $\{\texttt{b}\} = A;~\{\texttt{b,d,a}\} = B;~\{\texttt{c,a}\} = C$.
			Here, $\word{}^I$ = AACB with suffix array $\SA_{\word{}^I} = [1,2,4,3]$, which is not arithmetically progressed.

			\item Finally, we can incorporate the suffix arrays of the individual sets treated as strings, so the above suffix array [1,2,4,3], with both indeterminate and solid letter positions, becomes
			  \[
			    [(1,1), (2,1), ((4,3),(4,1),(4,2)),((3,2),(3,1))].
			  \]

		\end{itemize}
	\end{example}

	Clearly, there is a natural generalization of many types of well-known patterned strings over solid letters to the indeterminate form, such as \emph{indeterminate Fibonacci words}, where the solid form can give a meta representation of the generalized form.

\section{Conclusion and Problems}\label{secConclusion}
Given an arithmetically progressed permutation~$P$ with ratio $k$, we studied the minimum alphabet size and the shape of those strings having~$P$ as their suffix array.
Only in the case $P = [n,n-1,\ldots,1]$, a unary alphabet suffices.
For general $P = [p_1,\ldots,p_n] \not= [n,n-1,\ldots,1]$, there is exactly one such string on the binary alphabet if and only if $p_1 \in \{1,k+1,n\}$.
In all other cases, there is exactly one such string on the ternary alphabet.
We conclude by proposing some research directions.
\begin{itemize}
\label{item-problems}

\item A natural question arising from this research is to characterize strings having arithmetic progression properties for the run length exponents of their BWTs, particularly for the bijective~\cite{gil12bbwt} or extended BWT~\cite{mantaci07ebwt}, which are always invertible.

For example, given the arithmetically progressed permutation 3214, 
then the run-length compressed string 
$\texttt{a}^3\texttt{c}^2 \texttt{\$} \texttt{b}^4$ 
(a) matches the permutation 3214 and (b) is a BWT image because its inverse is $\texttt{b}^2\texttt{cb}^2 \texttt{ca}^3 \texttt{\$}$, which can be computed by the Last-First mapping. 
However, for the same permutation, $\texttt{a}^3 \texttt{b}^2 \texttt{\$} \texttt{b}^4$ does not work since it is not a BWT image. Further examples of arithmetically progressed BWT exponents are: $\texttt{a}^3\texttt{c}^4 \texttt{\$} \texttt{b}^2$,  $\texttt{a}^4\texttt{c}^3\texttt{e}^2\texttt{\$}\texttt{d}^6\texttt{b}^5$, and $\texttt{a}^4\texttt{c}^3\texttt{e}^2\texttt{\$}\texttt{f}^7\texttt{d}^6\texttt{b}^5$.

\newcommand*{\LCP}{\textsf{LCP}}

\item Arithmetic properties can likewise be considered for the following stringology integer arrays: 
\begin{itemize}
	\item Firstly the \Define{longest common prefix} (LCP) array~\LCP{}, whose entry
	$\LCP[i]$ is the length of the longest common prefix of the \emph{lexicographically} $i$-th smallest suffix with its lexicographic predecessor for $i \in [2..n]$.
\item Given a string $\word \in \Sigma^+$ of length~$n$, 
the \Define{prefix table} $P_{\word}$ of \word{} is given by $P_{\word}[i] = \LCP(\word, \word[i..n])$ for $i \in [1..n]$; 
equivalently, the \Define{border table} $B_{\word}$ of \word{} is defined by 
\[
B_{\word}[i] = \max\{|\s{S}| \mid \s{S} \text{~is a border of~} \word[1..i]\} \text{~for~} i \in [1..n]. 
\]
\item Integer \Define{prefix lists} are more concise than prefix tables and give the lengths of overlapping LCPs of \word{} and suffixes of \word{} 
(cf.~\cite{clement17border}).

\item The $i$-th entry of the \Define{Lyndon array} $\s{\lambda} = \s{\lambda}_{\word}[1..n]$
of a given string~$\word = \word[1..n]$
is the length
of the longest Lyndon word that is a prefix of $T[i..]$ -- reverse engineering in~\cite{daykin18lyndon} includes a linear-time test
for whether an integer array is a Lyndon array.
Likewise, the \Define{Lyndon factorization array} $ \s{F} = \s{F}_{\word}[1..n]$ of $\word$ stores in its $i$-th entry
the size of the Lyndon factorization (i.e., the number of factors) of the suffix $ \s{\word}[i..n]$. 
The problems are to characterize those arithmetic progressions that define a valid Lyndon array, respectively Lyndon factorization array.
For example, consider the string $\word = \texttt{azyx}$, then its Lyndon array is $\s{\lambda}_{\word} = [4,1,1,1]$, while the Lyndon factorization array is $\s{F}_{\word} = [1,3,2,1]$.
Trivially, for $\word = \texttt{abc} \dots \texttt{z}$ the Lyndon array is an arithmetic progression and likewise for the Lyndon factorization array of 
$\word = \texttt{z}^t\texttt{y}^t\texttt{x}^t \ldots \texttt{a}^t$ for 
$\texttt{z} > \texttt{y} > \texttt{x} > \ldots > \texttt{a}$.
\end{itemize}

\item A challenging research direction is to consider arithmetic progressions for multi-dimensional suffix arrays and Fibonacci word sequences.

\end{itemize}

\subsection*{Acknowledgements}
We thank Gabriele Fici for the initial help in guiding this research started at StringMasters.

Funding: This research was part-funded by JSPS KAKENHI with grant number JP18F18120 and JP21K17701, and
by the European Regional Development Fund through the Welsh Government [Grant Number 80761-AU-137 (West)]:\\ 
\includegraphics[width=0.17 \linewidth] {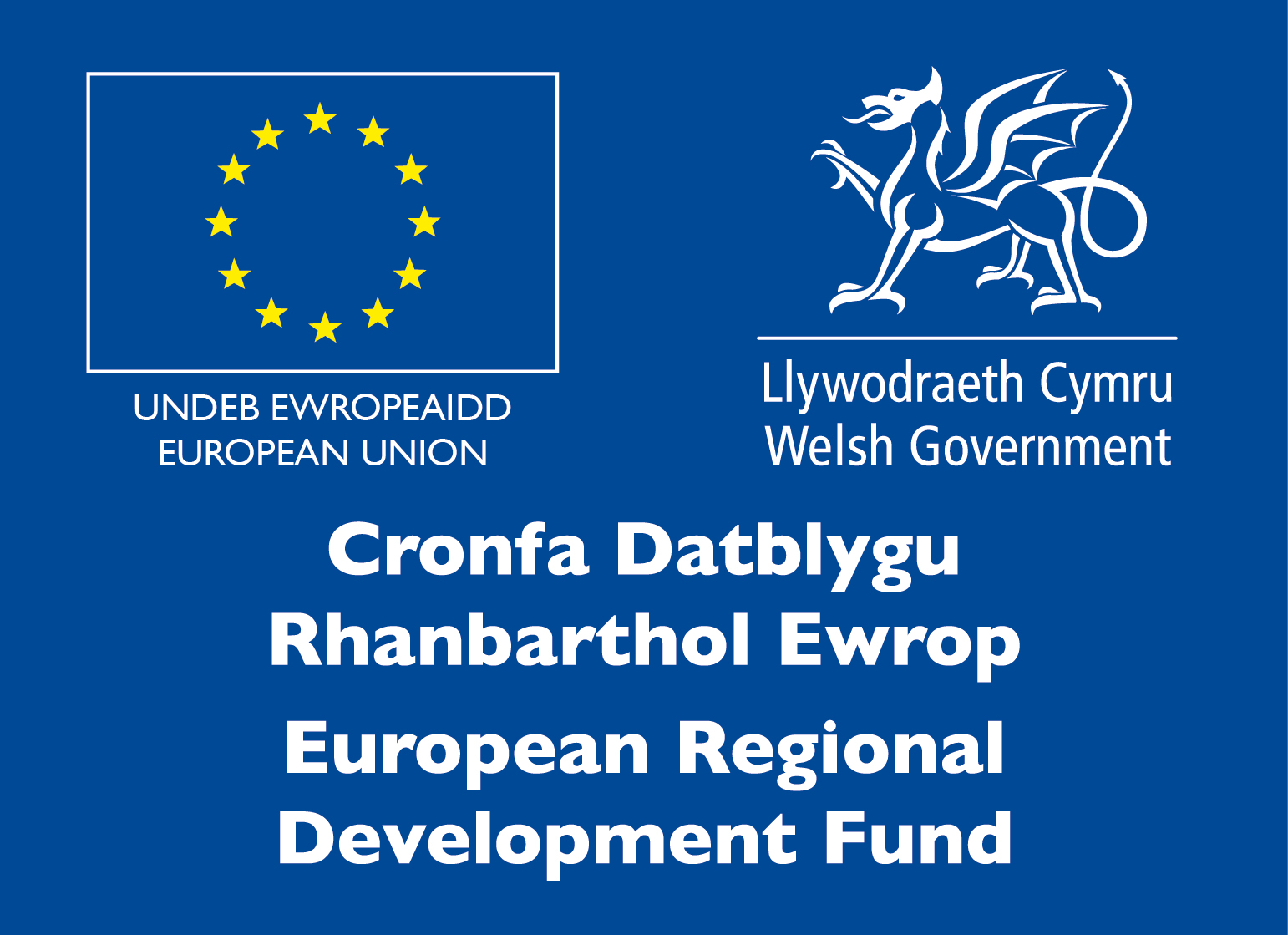}

\clearpage
\bibliographystyle{abbrv}

\begin{thebibliography}{10}

\bibitem{ABM-08}
D.~Adjeroh, T.~Bell, and A.~Mukherjee.
\newblock {\em The {Burrows-Wheeler Transform}: {Data compression}, {Suffix
  arrays}, and {Pattern} matching}.
\newblock Springer, 2008.

\bibitem{alatabbi19vorder}
A.~Alatabbi, J.~W. Daykin, N.~Mhaskar, M.~S. Rahman, and W.~F. Smyth.
\newblock Applications of {V}-order: Suffix arrays, the {Burrows-Wheeler}
  transform {\&} the {FM}-index.
\newblock In {\em Proc.\ WALCOM}, volume 11355 of {\em LNCS}, pages 329--338,
  2019.

\bibitem{bassino05standard}
F.~Bassino, J.~Cl{\'{e}}ment, and C.~Nicaud.
\newblock The standard factorization of {Lyndon} words: an average point of
  view.
\newblock {\em Discret. Math.}, 290(1):1--25, 2005.

\bibitem{BPPR20}
G.~Bernardini, N.~Pisanti, S.~P. Pissis, and G.~Rosone.
\newblock Approximate pattern matching on elastic-degenerate text.
\newblock {\em Theor. Comput. Sci.}, 812:109--122, 2020.

\bibitem{B71}
J.~Bernoulli.
\newblock Sur une nouvelle esp$\grave{e}$ce de calcul.
\newblock {\em Recueil pour les astronomes}, pages 255--284, 1771.

\bibitem{BLRS08}
J.~Berstel, A.~Lauve, C.~Reutenauer, and F.~Saliola.
\newblock {\em Combinatorics on Words: Christoffel Words and Repetition in
  Words}, volume~27 of {\em CRM monograph series}.
\newblock American Mathematical Society, 2008.

\bibitem{berstel06strumian}
J.~Berstel and A.~Savelli.
\newblock Crochemore factorization of {Sturmian} and other infinite words.
\newblock In {\em Proc.\ MFCS}, volume 4162 of {\em LNCS}, pages 157--166,
  2006.

\bibitem{bingmann16esais}
T.~Bingmann, J.~Fischer, and V.~Osipov.
\newblock Inducing suffix and {LCP} arrays in external memory.
\newblock {\em {ACM} Journal of Experimental Algorithmics},
  21(1):2.3:1--2.3:27, 2016.

\bibitem{BL93}
J.-P. Borel and F.~Laubie.
\newblock Quelques mots sur la droite projective r$\acute{e}$elle.
\newblock {\em J. Th$\acute{e}$or. Nombres Bordeaux}, 5(1):23--51, 1993.

\bibitem{burrows94bwt}
M.~Burrows and D.~J. Wheeler.
\newblock A block sorting lossless data compression algorithm.
\newblock Technical Report 124, Digital Equipment Corporation, Palo Alto,
  California, 1994.

\bibitem{CT98}
B.~Chapin and S.~R. Tate.
\newblock Higher compression from the {Burrows-Wheeler} transform by modified
  sorting.
\newblock In {\em Data Compression Conference, {DCC} 1998}, page 532. {IEEE}
  Computer Society, 1998.

\bibitem{CFL58}
K.~T. Chen, R.~H. Fox, and R.~C. {L}yndon.
\newblock Free differential calculus, iv -- the quotient groups of the lower
  central series.
\newblock {\em Ann. Math.}, pages 68:81--95, 1958.

\bibitem{christodoulakis06fibonacci}
M.~Christodoulakis, C.~S. Iliopoulos, and Y.~J.~P. Ardila.
\newblock Simple algorithm for sorting the {Fibonacci} string rotations.
\newblock In {\em Proc.\ {SOFSEM}}, volume 3831 of {\em LNCS}, pages 218--225,
  2006.

\bibitem{clement17border}
J.~Cl{\'{e}}ment and L.~Giambruno.
\newblock Representing prefix and border tables: results on enumeration.
\newblock {\em Mathematical Structures in Computer Science}, 27(2):257--276,
  2017.

\bibitem{CrochemoreGKL15}
M.~Crochemore, R.~Grossi, J.~K{\"{a}}rkk{\"{a}}inen, and G.~M. Landau.
\newblock Computing the {Burrows-Wheeler} transform in place and in small
  space.
\newblock {\em J. Discrete Algorithms}, 32:44--52, 2015.

\bibitem{DBLP:journals/fuin/DaykinDS09}
D.~E. Daykin, J.~W. Daykin, and W.~F. Smyth.
\newblock Combinatorics of unique maximal factorization families ({UMFFs}).
\newblock {\em Fundam. Informaticae}, 97(3):295--309, 2009.

\bibitem{daykin18lyndon}
J.~W. Daykin, F.~Franek, J.~Holub, A.~S. M.~S. Islam, and W.~F. Smyth.
\newblock Reconstructing a string from its {Lyndon} arrays.
\newblock {\em Theor. Comput. Sci.}, 710:44--51, 2018.

\bibitem{DGGLLLMPW19}
J.~W. Daykin, R.~Groult, Y.~Guesnet, T.~Lecroq, A.~Lefebvre, M.~L{\'{e}}onard,
  L.~Mouchard, {\'{E}}.~Prieur, and B.~W. Watson.
\newblock Efficient pattern matching in degenerate strings with the
  {Burrows-Wheeler} transform.
\newblock {\em Inf. Process. Lett.}, 147:82--87, 2019.

\bibitem{DGGLLLP16}
J.~W. Daykin, R.~Groult, Y.~Guesnet, T.~Lecroq, A.~Lefebvre, M.~L{\'{e}}onard,
  and {\'{E}}.~Prieur{-}Gaston.
\newblock Binary block order {R}ouen {T}ransform.
\newblock {\em Theor. Comput. Sci.}, 656:118--134, 2016.

\bibitem{DKKS20}
J.~W. Daykin, D.~K{\"{o}}ppl, D.~K{\"{u}}bel, and F.~Stober.
\newblock On arithmetically progressed suffix arrays.
\newblock In {\em Stringology}, pages 96--110, 2020.

\bibitem{DW17}
J.~W. Daykin and B.~W. Watson.
\newblock Indeterminate string factorizations and degenerate text
  transformations.
\newblock {\em Mathematics in Computer Science}, 11(2):209--218, 2017.

\bibitem{DBLP:journals/jal/Duval83}
J.-P. Duval.
\newblock Factorizing words over an ordered alphabet.
\newblock {\em J. Algorithms}, 4(4):363--381, 1983.

\bibitem{feller68probability}
W.~Feller.
\newblock {\em An introduction to probability theory and its applications}.
\newblock Wiley, 1968.

\bibitem{fischer19distributed}
J.~Fischer and F.~Kurpicz.
\newblock Lightweight distributed suffix array construction.
\newblock In {\em Proc.\ ALENEX}, pages 27--38, 2019.

\bibitem{FP74}
M.~J. Fischer and M.~S. Paterson.
\newblock String matching and other products.
\newblock In {\em Complexity of Computation}, volume~7, pages 113--125, 1974.

\bibitem{FIRVV08}
T.~Flouri, C.~S. Iliopoulos, M.~S. Rahman, L.~Vagner, and
  M.~Vor{\'{a}}{\v{c}}ek.
\newblock Indexing factors in {DNA/RNA} sequences.
\newblock In {\em Proc.\ BIRD}, volume~13 of {\em Communications in Computer
  and Information Science}, pages 436--445. Springer, 2008.

\bibitem{gil12bbwt}
J.~Y. Gil and D.~A. Scott.
\newblock A bijective string sorting transform.
\newblock {\em ArXiv 1201.3077}, 2012.

\bibitem{giuliani19dollar}
S.~Giuliani, Z.~Lipt{\'{a}}k, and R.~Rizzi.
\newblock When a dollar makes a {BWT}.
\newblock In {\em Proc.\ ICTCS}, volume 2504 of {\em {CEUR} Workshop
  Proceedings}, pages 20--33, 2019.

\bibitem{Gray53}
F.~Gray.
\newblock Pulse code communication, March 1953 (filed November 1947).
\newblock U.S. Patent 2,632,058.

\bibitem{hoggatt77fibonacci}
V.~Hoggatt and M.~Bicknell-Johnson.
\newblock {Composites and Primes Among Powers of Fibonacci Numbers increased or
  decreased by one}.
\newblock {\em Fibonacci Quarterly}, 15:2, 1977.

\bibitem{HR03}
C.~Hohlweg and C.~Reutenauer.
\newblock Lyndon words, permutations and trees.
\newblock {\em Theor. Comput. Sci.}, 307(1):173--178, 2003.

\bibitem{HPB13}
L.~Huang, V.~Popic, and S.~Batzoglou.
\newblock Short read alignment with populations of genomes.
\newblock {\em Bioinform.}, 29(13):361--370, 2013.

\bibitem{koppl20inplace}
D.~K{\"{o}}ppl, D.~Hashimoto, D.~Hendrian, and A.~Shinohara.
\newblock In-place bijective {Burrows-Wheeler} transformations.
\newblock In {\em Proc.\ CPM}, LIPIcs, page to appear, 2020.

\bibitem{koppl15fibonacci}
D.~K{\"{o}}ppl and T.~I.
\newblock Arithmetics on suffix arrays of {Fibonacci} words.
\newblock In {\em Proc.\ WORDS}, volume 9304 of {\em LNCS}, pages 135--146,
  2015.

\bibitem{lothaire97combinatorics}
M.~Lothaire.
\newblock {\em Combinatorics on Words}.
\newblock Cambridge Mathematical Library. Cambridge University Press, 2
  edition, 1997.

\bibitem{manber93sa}
U.~Manber and E.~W. Myers.
\newblock Suffix arrays: {A} new method for on-line string searches.
\newblock {\em {SIAM} J. Comput.}, 22(5):935--948, 1993.

\bibitem{mantaci07ebwt}
S.~Mantaci, A.~Restivo, G.~Rosone, and M.~Sciortino.
\newblock An extension of the {Burrows-Wheeler} transform.
\newblock {\em Theor. Comput. Sci.}, 387(3):298--312, 2007.

\bibitem{MRRS14}
S.~Mantaci, A.~Restivo, G.~Rosone, and M.~Sciortino.
\newblock Suffix array and {L}yndon factorization of a text.
\newblock {\em J. Discrete Algorithms}, 28:2--8, 2014.

\bibitem{mantaci03sturmian}
S.~Mantaci, A.~Restivo, and M.~Sciortino.
\newblock {Burrows-Wheeler} transform and {Sturmian} words.
\newblock {\em Inf. Process. Lett.}, 86(5):241--246, 2003.

\bibitem{Mel1999}
G.~Melan\c{c}on.
\newblock {\em Visualisation de graphes et combinatoire des mots}.
\newblock Th{\`{e}}se d’habilitation {\`{a}} diriger les recherches,
  Universit{\'{e}} Bordeaux I, 1999.

\bibitem{nong09sais}
G.~Nong, S.~Zhang, and W.~H. Chan.
\newblock Linear suffix array construction by almost pure induced-sorting.
\newblock In {\em Proc.\ DCC}, pages 193--202, 2009.

\bibitem{restivo2009balanced}
A.~Restivo and G.~Rosone.
\newblock Balanced words having simple burrows-wheeler transform.
\newblock In {\em International Conference on Developments in Language Theory},
  pages 431--442. Springer, 2009.

\bibitem{rytter06fibonacci}
W.~Rytter.
\newblock The structure of subword graphs and suffix trees of {Fibonacci}
  words.
\newblock {\em Theor. Comput. Sci.}, 363(2):211--223, 2006.

\bibitem{S62}
A.~I. Shirshov.
\newblock On bases for free {L}ie algebra.
\newblock {\em Algebra Logika}, 1(1):14--19, 1962.
\newblock (in Russian).

\bibitem{simpson08bwt}
J.~Simpson and S.~J. Puglisi.
\newblock Words with simple {Burrows-Wheeler} transforms.
\newblock {\em Electr. J. Comb.}, 15(1), 2008.

\bibitem{V78}
X.~G. Viennot.
\newblock Alg$\grave{e}$bres de {L}ie libres et mono{\"i}des libres.
\newblock {\em Lecture Notes in Mathematics, Springer, Berlin}, 691:124, 1978.

\bibitem{wells05prime}
D.~Wells.
\newblock {\em Prime Numbers: The Most Mysterious Figures in Math}.
\newblock Wiley, 2005.

\end{thebibliography}

\end{document}